\newcommand{\bE}{\ensuremath{\mathbb{E}}}
\newcommand{\bN}{\ensuremath{\mathbb{N}}}
\newcommand{\bP}{\ensuremath{\mathbb{P}}}
\newcommand{\bR}{\ensuremath{\mathbb{R}}}
\newcommand{\bZ}{\ensuremath{\mathbb{Z}}}
\newcommand{\ind}{\ensuremath{\mathbbm{1}}}
\newcommand{\cB}{\ensuremath{\mathcal{B}}}
\newcommand{\cC}{\ensuremath{\mathcal{C}}}
\newcommand{\fF}{\ensuremath{\mathfrak{F}}}
\newcommand{\bs}{\backslash}
\newcommand{\hqed}{\hfill\qed}
\newcommand{\abs}[1]{\left\vert \, #1 \, \right\vert}
\newcommand{\norm}[1]{\left\Vert \, #1 \, \right\Vert}
\newcommand{\normb}[1]{\interleave \, #1 \, \interleave}
\newcommand{\konv}[1]{\underset{#1}{\longrightarrow}}
\newcommand{\ddx}[1][1]{\ifnum#1=1 \frac{d}{dx} \else \frac{d^{#1}}{dx^{#1}} \fi}
\newcommand{\ddy}[1][1]{\ifnum#1=1 \frac{d}{dy} \else \frac{d^{#1}}{dy^{#1}} \fi}
\newcommand{\ddt}[1][1]{\ifnum#1=1 \frac{d}{dt} \else \frac{d^{#1}}{dt^{#1}} \fi}
\newcommand{\erwsymbol}{\mathbb{E}\,}
\newcommand{\erwc}[2]{\erwsymbol\left[#1\,\middle|\,#2\right]}
\newcommand{\suml}{\sum\limits}
\newcommand{\supl}{\sup\limits}
\newcommand{\infl}{\inf\limits}
\newtheorem{theorem}{Theorem}[section]
\newtheorem{lemma}[theorem]{Lemma}
\newtheorem{proposition}[theorem]{Proposition}
\newtheorem{corollary}[theorem]{Corollary}
\newtheorem{definition}[theorem]{Definition}
\newtheorem{notation}[theorem]{Notation}
\newenvironment{assumption}[1][]{\begin{trivlist}
\item[\hskip \labelsep {\bfseries Assumption #1}]\begin{em}}{\end{em}\end{trivlist}}
\renewenvironment{proof}[1][Proof]{\begin{trivlist}
\item[\hskip \labelsep {\bfseries #1}]}{\end{trivlist}}
\newenvironment{remark}[1][Remark]{\begin{trivlist}
\item[\hskip \labelsep {\bfseries #1}]}{\end{trivlist}}
\newcommand{\EP}{\widehat{\bP}^E}
\newcommand{\EE}{\widehat{\bE}^E}
\newcommand{\hP}{\widehat{\bP}}
\newcommand{\hE}{\widehat{\bE}}
\newcommand{\oLEP}{\overline{L}^{EP}}
\newcommand{\oLE}{\overline{L}^{E}}
\newcommand{\oL}{\overline{L}}
\newcommand{\oA}{\overline{A}}
\newcommand{\uA}{\underline{A}}
\title{Limit theorems for random walks in dynamic random environment}
\author{Frank Redig\thanks{University of Nijmegen, IMAPP, Heyendaalse weg 135, 6525 AJ Nijmegen, The Netherlands \newline
redig@math.leidenuniv.nl} \and Florian V\"ollering\thanks{Leiden University, Mathematical Institute, Niels Bohrweg 1, 2333 CA Leiden, The Netherlands \newline fvolleri@math.leidenuniv.nl}}
\begin{document}

\maketitle

\begin{abstract}
We study a general class of random walks driven by a uniquely ergodic Markovian environment. Under a coupling condition on the environment we obtain strong ergodicity properties and concentration inequalities for the environment as seen from the position of the walker, i.e the environment process. We also obtain ergodicity of the uniquely ergodic measure of the environment process as well as continuity as a function of the jump rates of the walker. 

As a consequence we obtain several limit theorems, such as law of large numbers, Einstein relation, central limit theorem and concentration properties for the position of the walker.
\end{abstract}
{\bf Keywords: } environment process, coupling, random walk, concentration estimates, backwards martingales.

\text{\bf AMS classification: 82C41(Primary) 60F17(Secondary)} 

\section{Introduction}
In recent days random walks in dynamic random environment have been studied
by several authors. Motivation comes among others from non-equilibrium statistical
mechanics -derivation of Fourier law- 
\cite{DOLGOPYAT:LIVERANI:08} and large deviation theory \cite{RASSOULAGHA:SAPPALAINEN:11}.
In principle random walk in dynamic random environment contains as a particular case
random walk in static random environment. However, mostly, in turning to dynamic
environments, authors concentrate more on environments with sufficient
mixing properties. In that case the fact that the environment
is dynamic helps to obtain self-averaging properties that ensure
standard limiting behavior of the walk, i.e., law of large numbers
and central limit theorem.

In the study of the limiting behavior of the walker, the environment
process, i.e., the environment as seen from the position
of the walker plays a crucial role. See also \cite{JOSEPH:RASSOULAGHA:10}, \cite{RASSOULAGHA:03} for the use of the environment process in related context. In a translation invariant setting the environment process is a Markov
process and its ergodic properties fully determine corresponding
ergodic properties of the walk, since the position of the walker
equals an additive function of the environment process plus a controllable martingale.

The main theme of this paper is precisely the study of the following
natural question: if the environment is uniquely ergodic, with
a sufficient speed of mixing, then the environment process
shares similar properties.
In several works (\cite{BOLDREGHINI:IGNATYUK:92}, \cite{BOLDREGHINI:MINLOS:07}, \cite{AVENA:DENHOLLANDER:REDIG:11}) this transfer
of ``good properties of the environment'' to ``similar properties
of the environment process'' is made via a perturbative argument,
and therefore holds only in a regime where the environment
and the walker are weakly coupled. An exception
is \cite{DOLGOPYAT:LIVERANI:09} in the case of an environment consisting
of independent Markov processes.

In this paper we consider the context of general Markovian
uniquely ergodic environments,
which are such that the semigroup contracts at a minimal
speed in norm of variation type.
Examples of such environments include interacting particle systems in ``the $M<\epsilon$
regime'' \cite{LIGGETT:05} and weakly interacting diffusion processes
on a compact manifold. Our conditions on the environment are formulated in the language of coupling.
More precisely, we impose that for the environment there exists
a coupling such that the distance between every pair of
initial configurations in this coupling decays
fast enough so that multiplied with $t^d$ it is still
integrable in time.
As a result we then obtain that for the environment process
there exists a coupling such that the distance between every pair of
initial configurations in this coupling decays
at a speed which is at least
integrable in time. In fact we show more, namely in going from the
environment to the environment process we essentially loose a factor $t^d$. E.g., 
if for the environment there is a coupling where
the distance decays exponentially, then the same
holds for the environment process (with possibly another
rate).

Once we have 
controllable coupling properties of the environment process,
we can draw strong conclusions for the position of the walker.
More precisely, we prove a law of large numbers with an asymptotic speed
that depends continuously on the rates, and a central limit
theorem with a controllable asymptotic covariance matrix.
We also prove recurrence in $d=1$ under condition
of zero speed and transience when the speed is non zero, as well as 
an Einstein relation.

Along the way, we develop a general formalism to derive concentration
inequalities for functions of a Markov process, based
on martingales adapted to quantities
like the position of the walker, which
is not exactly an additive functional of the environment process.
Concentration inequalities for the position of the walker are
crucial to obtain transience or recurrence.

The formalism to derive these concentration inequalities is in the
spirit of \cite{REDIG:VOLLERING:10} but based now on ``backwards'' martingales.
These ``backwards'' martingales are different from the classical
martingales associated to the generator, or arising in the
``martingale problem''. Using them
leads however to a controllable expression of
the variance of the walker, as well as for additive functionals
of the environment process.
We believe that the use of these backwards martingales are interesting per se,
and can also play
an important role in controlling the deviation from macroscopic
behavior (such as hydrodynamic limits)
in the context of interacting particle systems.

Our paper is organized as follows.
The model and necessary notation are introduced in Section \ref{section:model}. Section \ref{section:EP} is dedicated to lift properties of the environment to the environment process. Especially Theorem \ref{thm:EP-estimate} is of great importance and is used frequently in the later sections. Based on the results in Section \ref{section:EP} the Law of Large Numbers and an Einstein Relation are obtained in Section \ref{section:LLN}. A functional Central Limit Theorem is proven in Section \ref{section:CLT}, utilizing the martingale approach detailed in the Appendix. Section \ref{section:RWRE-concentration} further applies the general methods from the Appendix to the specific case of a random walk in dynamic random environment to obtain concentration estimates and therewith statements about recurrence and transience. Finally the Appendix is dedicated to the study of the non-time-homogeneous ``backwards'' martingales and the concentration inequalities they facilitate.

\section{The model}\label{section:model}
\subsection{Environment}
We are interested in studying a random walk $(X_t)_{t\geq0}$ on the lattice $\bZ^d$ which is driven by a second processes $(\eta_t)_{t\geq0}$ on $E^{\bZ^d}$, the (dynamic) environment. This can be interpreted as the random walk moving through the environment, and its transition rates being determined by the local environment around the random walk.

To become more precise, the \emph{environment} $(\eta_t)_{t\geq0}$ is a Feller Process on the state space $\Omega:=E^{\bZ^d}$, where $(E,\rho)$ is a compact Polish space with metric $\rho$ (examples in mind are $E=\{0,1\}$ or $E=[0,1]$). We assume that the distance $\rho$ on $E$ is bounded from above by 1. The generator of the Markov process is denoted by $L^E$ and its semigroup by $S_t^E$, both considered on the space of continuous functions $\cC(\Omega;\bR)$. We assume that the environment is translation invariant, i.e. \[ \bP^E_\eta(\theta_x\eta_t \in \cdot ) = \bP^E_{\theta_x\eta}(\eta_t \in \cdot) \] with $\theta_x$ denoting the shift operator $\theta_x\eta(y) = \eta(y-x)$ and $\bP^ E_\eta$ the path space measure of the process $(\eta_t)_{t\geq0}$ starting from $\eta$. Later on we will formulate precise conditions necessary to obtain our results.

\subsection{Lipschitz functions}
\newcommand{\Omegax}[1]{(\Omega\times\Omega)_{#1}}
For practical purposes, we introduce the set of pairs in $\Omega$ which differ at one specific site:
\[ \Omegax{x} := \left\{(\eta,\xi) \in \Omega^2 : \eta(x) \neq \xi(x) \text{ and } \eta(y)=\xi(y) \ \forall\,y\in\bZ^d\bs\{x\} \right\},\quad x\in\bZ^d.  \]

\begin{definition}
For any $f:\Omega\to\bR$, we denote by $\delta_f(x)$ the Lipschitz-constant of $f$ when only site $x$ is changed with respect to the distance $\rho$, i.e.
\[ \delta_f(x) := \supl_{(\eta,\xi)\in \Omegax{x}}\frac{f(\eta)-f(\xi)}{\rho(\eta(x),\xi(x))}. \]
We write 
\begin{align}
\normb{f}:=\suml_{x\in\bZ^d}\delta_f(x).	
\end{align}
\end{definition}
Note that $\normb{f}<\infty$ implies that $f$ is globally bounded and that the value of $f$ is uniformly weakly dependent on sites far away. A rougher semi-norm we also use is the oscillation (semi)-norm
\begin{align*}
	\norm{f}_{osc} := \sup_{\eta,\xi \in \Omega}\left(f(\eta)-f(\xi)\right).
\end{align*}
Generally, those two semi-norms are related by the inequality $\norm{f}_{osc} \leq \normb{f}$.

\subsection{The random walker and assumption on rates}\label{subsection:rates}
\newcommand{\dalpha}[1][z]{\delta_{\alpha(\cdot,#1)}}
The random walker $X_t$ is a process on $\bZ^d$, whose transition rates depend on the state of the environment as seen from the walker. More precisely, the rate to jump from site $x$ to site $x+z$ given that the environment is in state $\eta$ is $\alpha(\theta_{-x}\eta,z)$. 
We make two assumptions on the jump rates $\alpha$. First, we guarantee that the walker $X_t$ has first moments by assuming
\begin{align}\label{eq:rates-moment}
\norm{\alpha}_1 := \suml_{z\in\bZ^d}\norm{z}\supl_{\eta\in\Omega}\abs{\alpha(\eta,z)}<\infty.
\end{align}
More generally, as sometimes higher moments are necessary, we write
\[ \norm{\alpha}_p^p := \suml_{z\in\bZ^d}\norm{z}^p\supl_{\eta\in\Omega}\abs{\alpha(\eta,z)}, \quad p\geq1. \]
Second, we limit the sensitivity of the rates to small changes in the environment by assuming that
\begin{align}
 \normb{\alpha}:=\sum_{z\in\bZ^d} \normb{\alpha(\cdot,z)}<\infty.
\end{align}
In Section \ref{section:RWRE-concentration}, we impose the following stronger condition $\norm{\alpha}_1<\infty$:
\begin{align}
	\normb{\alpha}_1:=\sum_{z\in\bZ^d} \norm{z}\normb{\alpha(\cdot,z)}<\infty.
\end{align}

\subsection{Environment process}
While the random walker $X_t$ itself is not a Markov process due to the dependence on the environment, the pair $(\eta_t,X_t)$ is a Markov process with generator
\[ Lf(\eta,x) = L^Ef(\cdot,x)(\eta) + \suml_{z\in\bZ^d}\alpha(\theta_{-x}\eta,z)\left[f(\eta,x+z)-f(\eta,x)\right], \]
corresponding semigroup $S_t$ (considered on the space of functions continuous in $\eta\in\Omega$ and Lipschitz continuous in $x\in\bZ^d$) and path space measure $\bP_{\eta,x}$. 

The environment as seen from the walker is of crucial importance to understand the asymptotic behaviour of the walker itself. This process, $(\theta_{-X_t}\eta_t)_{t\geq0}$, is also called the environment process (this name is common in the literature, however in the context of this paper that name can easily be confused with the environment $\eta_t$), which is also a Markov process with generator
\[ L^{EP}f(\eta) = L^E f(\eta) + \suml_{z\in\bZ^d}\alpha(\eta,z)\left[f(\theta_{-z}\eta)-f(\eta)\right], \] 
corresponding semigroup $S_t^{EP}$ (on $\cC(\Omega)$) and path space measure $\bP^{EP}_{\eta}$. Notice that this process is well-defined only in the translation invariant context.
\section{Ergodicity of the environment process}\label{section:EP}
\subsection{Assumptions on the environment}
\newcommand{\sumsup}[1][x]{\suml_{x\in\bZ^d}\supl_{(\eta,\xi)\in\Omegax{#1}}}

In this section we will show how we can use a given Markovian coupling of the environment with a fast enough coupling speed to obtain a strong ergodicity properties about the environment process.
\begin{assumption}[1a.]
There exists a strong Markovian coupling $\EP$ of the environment (with corresponding expectation $\EE$) which satisfies
\[ \int_0^\infty t^d \supl_{\eta,\xi\in \Omega} \EE_{\eta,\xi}\rho(\eta_t^1(0),\eta_t^2(0))\,dt<\infty .\]
\end{assumption}
This assumption is already sufficient to obtain the law of large numbers for the position of the walker and unique ergodicity of the environment process,
but it does not give quite enough control on local fluctuations. The following stronger assumption remedies that.
\begin{assumption}[1b.]
There exists a strong Markovian coupling $\EP$ of the environment (with corresponding expectation $\EE$) which satisfies
\[ \int_0^\infty t^d \sumsup[0] \frac{\EE_{\eta,\xi}\rho(\eta_t^1(x),\eta_t^2(x))}{\rho(\eta(0),\xi(0))}\,dt<\infty .\]
\end{assumption}
\begin{remark}
The following \emph{telescoping} argument which will be used several times later on shows how Assumption 1b implies 1a:

Fix $\eta,\xi \in \Omega$. Let $(\zeta_n)_{n\geq0}$ be a sequence in $\Omega$ with $\zeta_0=\eta$, $\lim_{n\to\infty}\zeta_n =\xi$ and $|\{x\in\bZ^d : \zeta_n(x)\neq\zeta_{n-1}(x) \}|\leq1$ for all $n\geq1$. This sequence interpolates between the configurations $\eta$ and $\xi$ by single site changes. Using such a sequence, we can \emph{telescope over single site changes}, and using the triangle inequality for each $n$:
\[ \EE_{\eta,\xi}\rho(\eta_t^1(0),\eta_t^2(0)) \leq \sum_{n\in\bN} \EE_{\zeta_{n+1},\zeta_n}\rho(\eta_t^1(0),\eta_t^2(0)) .\]
We can assume that each site $x\in\bZ^d$ is at most once changed in the sequence $\zeta_n$. Therefore,
\[ \sum_{n\in\bN} \EE_{\zeta_{n+1},\zeta_n}\rho(\eta_t^1(0),\eta_t^2(0)) \leq \sumsup[x] \EE_{\eta,\xi}\rho(\eta_t^1(0),\eta_t^2(0)) .\]
Using the translation invariance of the environment and $\rho\leq1$ we obtain the estimate
\[ \EE_{\eta,\xi}\rho(\eta_t^1(0),\eta_t^2(0)) \leq \sumsup[0] \frac{\EE_{\eta,\xi}\rho(\eta_t^1(x),\eta_t^2(x))}{\rho(\eta(0),\xi(0))} .\]
Taking the supremum on the left hand site and integrating over time, we obtain that Assumption 1b indeed implies Assumption 1a.
\end{remark}

Examples which satisfy Assumption 1a and 1b include:
\begin{itemize}
\item interacting particle systems in the so-called $M<\epsilon$;
\item weakly interacting diffusions on a compact manifold;
\item a system of ODEs which converges uniformly to its unique stationary configuration at sufficient speed.
\end{itemize}
Note that the third example is a deterministic environment and as such any form of condition which measures this convergence purely in probabilistic terms like the total variation distance is bound to fail. 
\subsection{Statement of the main theorem}
The main result of this section is the following theorem, which tells us how the coupling property of the environment lifts to the environment process.
\begin{theorem}\label{thm:EP-estimate}
Let $f:\Omega\rightarrow \bR$ with $\normb{f}<\infty$. 
\begin{enumerate}
\item Under Assumption 1a, there exists a constant $C_a>0$ so that
\[ \supl_{\eta,\xi\in \Omega}\int_0^\infty \abs{S^{EP}_t f(\eta)-S^{EP}_t f(\xi)}\,dt\leq C_a\normb{f}. \]
\item Under Assumption 1b, there exists a constant $C_b>0$ so that
\[ \sumsup\int_0^\infty \frac{\abs{S^{EP}_t f(\eta)-S^{EP}_t f(\xi)}}{\rho(\eta(x),\xi(x))}\,dt\leq C_b\normb{f}. \]
\end{enumerate}
\end{theorem}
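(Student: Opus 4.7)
\textbf{Proof plan for Theorem~\ref{thm:EP-estimate}.}

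The plan is to construct a joint coupling $\hP_{\eta,\xi}$ of two environment--walker systems $(\eta^i_t,X^i_t)_{i=1,2}$ starting respectively from $(\eta,0)$ and $(\xi,0)$, with environments coupled via the given Markovian coupling $\EP$ and walkers coupled by a basic coupling (synchronizing at rate $\min(\alpha(\theta_{-x}\eta^1_t,z),\alpha(\theta_{-x}\eta^2_t,z))$ whenever $X^1_t=X^2_t=x$, using the excess rates to decouple). The coupling inequality together with the Lipschitz decomposition $|f(\zeta)-f(\zeta')|\leq\sum_y\delta_f(y)\rho(\zeta(y),\zeta'(y))$ and $\rho\leq 1$ on the event $X^1_t\neq X^2_t$ yields the pointwise estimate
\[
|S^{EP}_tf(\eta)-S^{EP}_tf(\xi)|
\leq \sum_y\delta_f(y)\,\hE_{\eta,\xi}\rho(\eta^1_t(X^1_t+y),\eta^2_t(X^1_t+y)) + \normb{f}\,\hP_{\eta,\xi}(X^1_t\neq X^2_t),
\]
and the proof reduces to controlling the time-integral of each term.

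For the environment contribution, I would use that the marginal law of $(\eta^1_t,\eta^2_t)$ under $\hP_{\eta,\xi}$ is $\EP_{\eta,\xi}$, so by translation invariance $\hE\rho(\eta^1_t(z),\eta^2_t(z))\leq\psi(t):=\sup_{\eta,\xi}\EE_{\eta,\xi}\rho(\eta^1_t(0),\eta^2_t(0))$ uniformly in $z$. Decomposing over the walker's position and using the crude estimate $\hE[\rho\cdot\ind_{X^1_t=z}]\leq\hE\rho$ for $z$ in a ball $B_{R(t)}$ and $\hE[\rho\cdot\ind_{X^1_t=z}]\leq\hP(X^1_t=z)$ for $z$ outside it yields
\[
\hE\rho(\eta^1_t(X^1_t+y),\eta^2_t(X^1_t+y))\leq (2R(t)+1)^d\,\psi(t)+\hP(|X^1_t|>R(t)).
\]
Choosing $R(t)$ of order $t$ --- the walker has at most linear mean displacement, since $\hE|X^1_t|\leq\norm{\alpha}_1 t$ --- produces exactly the $t^d$ weight appearing in Assumption 1a, while the tail term is controlled using moments of the walker inherited from moment assumptions on $\alpha$. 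Summing in $y$ and integrating in $t$ therefore gives a bound of the form $C\normb{f}\int_0^\infty(t^d\psi(t)+\text{tail})\,dt$, finite under Assumption 1a.

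The walker-decoupling term is where I expect the main technical obstacle. The rate at which the coupled walkers split when together is at most $\lambda(t)=\sum_{z,w}\delta_{\alpha(\cdot,z)}(w)\,\rho(\eta^1_t(X_t+w),\eta^2_t(X_t+w))$, of exactly the same Lipschitz form as the environment contribution and therefore integrable by the previous estimate with $\normb{\alpha}$ playing the role of $\normb{f}$. This bounds $\hP(X^1_t\neq X^2_t)$ uniformly in $t$ but does \emph{not} immediately give it decay, so getting $\int_0^\infty\hP(X^1_t\neq X^2_t)\,dt<\infty$ requires upgrading the coupling to allow the walkers to re-merge once the environments are nearly synchronized, exploiting that on the event the environments have essentially coupled the two walkers see (almost) identical rates and can be maximally coupled back together.

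Part (b) is obtained by running the same argument with $\eta,\xi\in\Omegax{x}$ and replacing $\psi(t)$ throughout by the finer bound $\EE\rho(\eta^1_t(z),\eta^2_t(z))\leq\rho(\eta(x),\xi(x))\,\phi_t(z-x)$ from Assumption 1b, where $\phi_t(w):=\sup_{(\eta',\xi')\in\Omegax{0}}\EE_{\eta',\xi'}\rho(\eta^1_t(w),\eta^2_t(w))/\rho(\eta'(0),\xi'(0))$ (non single-site comparisons are handled by the telescoping argument of the Remark after Assumption 1b). Dividing by $\rho(\eta(x),\xi(x))$ and summing over $x$ then produces a bound of the form $\normb{f}\int_0^\infty t^d\sum_w\phi_t(w)\,dt$, which is exactly the quantity shown finite by Assumption 1b.
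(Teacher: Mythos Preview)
Your decomposition and the treatment of the environment contribution are essentially correct and match the paper (the paper uses dominating walkers $Y^\pm_t$ independent of the environments instead of your ball $B_{R(t)}$, which avoids the tail term $\hP(|X^1_t|>R(t))$ --- note that with only $\hE|X^1_t|\leq\norm{\alpha}_1 t$ and $R(t)\sim ct$, Markov's inequality gives a non-integrable tail, so your version needs a small fix). The real gap is the walker-decoupling term. Your proposal to let the walkers ``re-merge once the environments are nearly synchronized'' does not work: once $X^1_t\neq X^2_t$, walker $i$ sees $\theta_{-X^i_t}\eta^i_t$, and even if $\eta^1_t=\eta^2_t$ these are \emph{different shifts} of the same configuration, so the two walkers have genuinely different rates and there is no mechanism forcing them to the same site (in $d\geq 3$ they typically never meet). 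Consequently $\hP_{\eta,\xi}(X^1_t\neq X^2_t)\to\hP_{\eta,\xi}(\tau<\infty)>0$ and $\int_0^\infty\hP(X^1_t\neq X^2_t)\,dt=\infty$ in your coupling; this term cannot be bounded on its own.

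The missing idea is a recursive restart rather than a re-merge. Let $\tau$ be the first decoupling time and set $\Psi(T):=\sup_{\eta,\xi}\int_0^T|S^{EP}_tf(\eta)-S^{EP}_tf(\xi)|\,dt$. Splitting the integral at $\tau$, the pre-$\tau$ part is your environment term (walkers coincide, so only $\eta^1$ vs.\ $\eta^2$ matters), while on $\{\tau\leq t\}$ the strong Markov property gives
\[
\hE\bigl[\ind_{\tau\leq t}\bigl(f(\theta_{-X^1_t}\eta^1_t)-f(\theta_{-X^2_t}\eta^2_t)\bigr)\bigr]
=\hE\bigl[\ind_{\tau\leq t}\bigl(S^{EP}_{t-\tau}f(\theta_{-X^1_\tau}\eta^1_\tau)-S^{EP}_{t-\tau}f(\theta_{-X^2_\tau}\eta^2_\tau)\bigr)\bigr],
\]
so that $\Psi(T)\leq C\normb{f}+\bigl(\sup_{\eta,\xi}\hP_{\eta,\xi}(\tau<\infty)\bigr)\,\Psi(T)$. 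Your own integrability argument for the splitting rate shows $\inf_{\eta,\xi}\hP_{\eta,\xi}(\tau=\infty)>0$, hence the contraction factor is strictly below $1$ and one solves for $\Psi(\infty)\leq C_a\normb{f}$. Part~(b) then follows by the same split: the pre-$\tau$ contribution is handled by Assumption~1b directly, and the post-$\tau$ contribution is $\hP_{\eta,\xi}(\tau<\infty)\cdot C_a\normb{f}$, so one only needs $\sum_x\sup_{(\eta,\xi)\in\Omegax{x}}\hP_{\eta,\xi}(\tau<\infty)/\rho(\eta(x),\xi(x))<\infty$, which again comes from bounding the integrated splitting rate via Assumption~1b.
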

This theorem encapsulates all the technical details and difficulties to obtain results, and subsection \ref{subsection:proof} is dedicated to its proof. In Section \ref{section:convergence-speed} we generalize this result to give more information about decay in time. Here we continue with results we can obtain using Theorem \ref{thm:EP-estimate}. Most results about the environment process just use part a) of the theorem, part b) is needed to obtain the CLT in Section \ref{section:CLT} or more sophisticated deviation estimates in Section \ref{section:RWRE-concentration}.

\subsection{Existence of a unique ergodic measure and continuity in the rates}
First of, the environment process, i.e. the environment as seen from the walker, is ergodic.
\begin{lemma}\label{lemma:ergodicity}
Under Assumption 1a the environment process has a unique ergodic probability measure $\mu^{EP}$.
\end{lemma}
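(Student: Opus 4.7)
The plan is to extract from Theorem \ref{thm:EP-estimate}(a) the stronger statement that $S_t^{EP}f$ converges uniformly to a constant for every $f$ with $\normb{f}<\infty$, and then combine this with Krylov--Bogolyubov to conclude both existence and uniqueness. Concretely, I would first note that since $\Omega$ is compact and $S_t^{EP}$ is Feller, the Krylov--Bogolyubov argument (averaging the push-forwards of any initial distribution) produces at least one invariant probability measure $\mu^{EP}$.

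The main work is then uniqueness. Fix $f:\Omega\to\bR$ with $\normb{f}<\infty$ and set $g(t) := \norm{S_t^{EP}f}_{osc}=\sup_{\eta,\xi\in\Omega}|S_t^{EP}f(\eta)-S_t^{EP}f(\xi)|$. The key observation is that $g$ is non-increasing in $t$: writing $S_{t+s}^{EP}f = S_s^{EP}(S_t^{EP}f)$ and using that the Markov operator $S_s^{EP}$ contracts the oscillation seminorm (it is an average of evaluations of $S_t^{EP}f$), we obtain $g(t+s)\leq g(t)$. Combined with Theorem \ref{thm:EP-estimate}(a), which gives $\int_0^\infty g(t)\,dt\leq C_a\normb{f}<\infty$, we conclude $g(t)\to 0$ as $t\to\infty$ (a nonnegative, monotone, integrable function must vanish at infinity).

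Now define $\overline{c}_t := \sup_\eta S_t^{EP}f(\eta)$ and $\underline{c}_t := \inf_\eta S_t^{EP}f(\eta)$. Both are bounded, $\overline{c}_t$ is non-increasing and $\underline{c}_t$ is non-decreasing (again by the Markov contraction), and their difference is $g(t)\to 0$. Hence they share a common limit $c(f)$, i.e.\ $S_t^{EP}f\to c(f)$ uniformly on $\Omega$. Given any invariant probability measure $\mu$, dominated convergence yields
\[
\int f\,d\mu \;=\; \int S_t^{EP} f\,d\mu \;\xrightarrow{t\to\infty}\; c(f),
\]
so $\int f\,d\mu=c(f)$ is determined by $f$ alone. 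The class of functions with $\normb{f}<\infty$ contains every Lipschitz cylinder function and therefore separates Borel probability measures on the compact metric space $\Omega$ (by Stone--Weierstrass); so $\mu$ is unique, and since ergodic invariant measures are the extremal points of the (nonempty) convex set of invariant measures, $\mu^{EP}$ is automatically ergodic.

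The only step that requires care is the monotonicity argument for $g(t)$ together with the passage from integrability to decay; once that is in place, the rest is a standard uniqueness-implies-ergodicity argument. The density/separation claim on $\{f:\normb{f}<\infty\}$ is routine (cylinder Lipschitz functions are dense in $\cC(\Omega)$), but I would write it out explicitly since the whole uniqueness argument hinges on it.
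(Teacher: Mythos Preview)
Your overall architecture is sound and close in spirit to the paper's, but there is a real gap in the step where you invoke Theorem~\ref{thm:EP-estimate}(a). That theorem asserts
\[
\sup_{\eta,\xi}\int_0^\infty \bigl|S_t^{EP}f(\eta)-S_t^{EP}f(\xi)\bigr|\,dt \;\leq\; C_a\normb{f},
\]
i.e.\ a bound on $\sup_{\eta,\xi}\int$, whereas you use it as a bound on $\int_0^\infty g(t)\,dt=\int_0^\infty\sup_{\eta,\xi}|\cdots|\,dt$. These two quantities are not interchangeable; in general $\int\sup\geq\sup\int$, and nothing in the setup forces equality. Monotonicity of $g$ lets you pass from $\int g<\infty$ to $g(t)\to0$, but it does \emph{not} let you pass from $\sup_{\eta,\xi}\int h_{\eta,\xi}<\infty$ to $\int g<\infty$, because the maximizing pair $(\eta,\xi)$ varies with $t$. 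So as written the deduction ``$\int_0^\infty g(t)\,dt\leq C_a\normb{f}$'' is unjustified.

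The paper's proof avoids this issue entirely by never claiming $g(t)\to0$. It uses stationarity directly: for invariant $\mu,\nu$,
\[
T\,|\mu(f)-\nu(f)| \;=\; \left|\int\!\!\int\!\!\int_0^T \bigl(S_t^{EP}f(\eta)-S_t^{EP}f(\xi)\bigr)\,dt\,\mu(d\eta)\,\nu(d\xi)\right| \;\leq\; \sup_{\eta,\xi}\int_0^\infty|\cdots|\,dt \;\leq\; C_a\normb{f},
\]
and then lets $T\to\infty$. This needs only the $\sup\!\int$ form that Theorem~\ref{thm:EP-estimate}(a) actually provides. If you want to stay within your framework, the cleanest patch is to replace the pointwise claim ``$S_t^{EP}f\to c(f)$ uniformly'' by the Ces\`aro version ``$\tfrac{1}{T}\int_0^T S_t^{EP}f(\eta)\,dt$ converges uniformly'', which follows immediately from the same inequality (its oscillation is $\leq C_a\normb{f}/T$); that is enough to conclude $\mu(f)=\nu(f)$ for all invariant $\mu,\nu$, and the rest of your argument (density of $\{f:\normb{f}<\infty\}$, uniqueness $\Rightarrow$ ergodicity) goes through unchanged.
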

\begin{proof}
As $E$ is compact, so is $\Omega$, and therefore the space of stationary measures is non-empty. So we must just prove uniqueness.

Assume $\mu,\nu$ are both stationary measures. Choose an arbitrary $f : \Omega\rightarrow \bR$ with $\normb{f}<\infty$. By Theorem \ref{thm:EP-estimate},a,
\begin{align*} 
T\abs{\mu(f)-\nu(f)} &\leq \int\int\int_0^T \abs{S^{EP}_t f (\eta)-S^{EP}_tf(\xi)}\,dt\,\mu(d\eta)\,\nu(d\xi) \\
& \leq \sup_{\eta,\xi\in \Omega} \int_0^\infty\abs{ S^{EP}_t f (\eta)-S^{EP}_t f(\xi)}\,dt<\infty. 
\end{align*}
As $T$ is arbitrary, $\mu(f)=\nu(f)$. As functions $f$ with $\normb{f}<\infty$ are dense in $\cC(\Omega)$, there is at most one stationary probability measure.
\qed
\end{proof}
It is of interest not only to know that the environment process has a unique ergodic measure $\mu^{EP}$, but also to know how this measure depends on the rates $\alpha$. 
\begin{theorem}\label{thm:mu-continuous}
Under Assumption 1a, the unique ergodic measure $\mu^{EP}_{\alpha}$ depends continuously on the rates $\alpha$. For two transition rate functions $\alpha, \alpha'$, we have the following estimate:
\[ \abs{\mu^{EP}_{\alpha}(f)-\mu^{EP}_{\alpha'}(f)} 
\leq \frac{C(\alpha)}{p(\alpha)}\norm{\alpha-\alpha'}_0 \normb{f}, \]
i.e. 
\[ (\alpha,f) \mapsto \mu^{EP}_\alpha(f) \]
is continuous in $\norm{\cdot}_0\times \normb{\cdot}$.
The functions $C(\alpha), p(\alpha)$ satisfy $C(\alpha)>0, p(\alpha)\in\,]0,1[$. In the case that the rates $\alpha$ do not depend on the environment, i.e. $\alpha(\eta,z)=\alpha(z)$, they are given by $p(\alpha)=1$,
\[ C(\alpha) = \int_0^\infty \sup_{\eta,\xi\in \Omega} \EE_{\eta,\xi}\rho(\eta^1_t(0),\eta^2_t(0))\,dt. \]
\end{theorem}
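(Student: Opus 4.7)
The plan is to compare $\mu^{EP}_\alpha$ and $\mu^{EP}_{\alpha'}$ through stationarity together with a Duhamel-type perturbation, reading off the constant from Theorem \ref{thm:EP-estimate}(a). For any $T>0$, invariance of both measures under their respective semigroups yields
\[\mu^{EP}_\alpha(f)-\mu^{EP}_{\alpha'}(f)=\frac{1}{T}\int_0^T\bigl[\mu^{EP}_\alpha(S^{EP,\alpha}_tf)-\mu^{EP}_{\alpha'}(S^{EP,\alpha'}_tf)\bigr]\,dt,\]
and I split the bracket via the intermediate quantity $\mu^{EP}_{\alpha'}(S^{EP,\alpha}_tf)$ into a \emph{mixing} part $\mu^{EP}_\alpha(S^{EP,\alpha}_tf)-\mu^{EP}_{\alpha'}(S^{EP,\alpha}_tf)$ and a \emph{perturbation} part $\mu^{EP}_{\alpha'}\bigl(S^{EP,\alpha}_tf-S^{EP,\alpha'}_tf\bigr)$.

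The mixing part equals $\iint[S^{EP,\alpha}_tf(\eta)-S^{EP,\alpha}_tf(\xi)]\,d\mu^{EP}_\alpha(\eta)\,d\mu^{EP}_{\alpha'}(\xi)$; Theorem \ref{thm:EP-estimate}(a) bounds its time integral on $[0,\infty)$ uniformly by $C_a(\alpha)\normb{f}$, so after averaging over $[0,T]$ it is $O(1/T)$ and vanishes as $T\to\infty$. For the perturbation part I invoke Duhamel's identity
\[S^{EP,\alpha}_tf-S^{EP,\alpha'}_tf=\int_0^t S^{EP,\alpha'}_{t-s}\bigl(L^{EP,\alpha}-L^{EP,\alpha'}\bigr)S^{EP,\alpha}_sf\,ds,\]
eliminate the outer semigroup by invariance of $\mu^{EP}_{\alpha'}$ under $S^{EP,\alpha'}$, and observe that $L^E$ cancels in $L^{EP,\alpha}-L^{EP,\alpha'}$, leaving only the jump contribution $\sum_{z\in\bZ^d}[\alpha(\eta,z)-\alpha'(\eta,z)]\bigl[S^{EP,\alpha}_sf(\theta_{-z}\eta)-S^{EP,\alpha}_sf(\eta)\bigr]$. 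Taking absolute values, pulling $\sup_\eta|\alpha(\eta,z)-\alpha'(\eta,z)|$ out, and applying Theorem \ref{thm:EP-estimate}(a) a second time with $\xi=\theta_{-z}\eta$ to bound $\int_0^\infty|S^{EP,\alpha}_sf(\theta_{-z}\eta)-S^{EP,\alpha}_sf(\eta)|\,ds\le C_a(\alpha)\normb{f}$ uniformly in $\eta$, then summing over $z$, produces the $T$-independent estimate $C_a(\alpha)\norm{\alpha-\alpha'}_0\normb{f}$ for the perturbation part.

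Combining both bounds and letting $T\to\infty$ gives $|\mu^{EP}_\alpha(f)-\mu^{EP}_{\alpha'}(f)|\le C_a(\alpha)\norm{\alpha-\alpha'}_0\normb{f}$, from which the continuity of $(\alpha,f)\mapsto\mu^{EP}_\alpha(f)$ in $\norm{\cdot}_0\times\normb{\cdot}$ follows immediately. The remaining work---and the only genuine obstacle---is to read off the factorisation $C_a(\alpha)=C(\alpha)/p(\alpha)$ from the proof of Theorem \ref{thm:EP-estimate} and to check that $p(\alpha)=1$ with $C(\alpha)=\int_0^\infty\sup_{\eta,\xi\in\Omega}\EE_{\eta,\xi}\rho(\eta_t^1(0),\eta_t^2(0))\,dt$ whenever $\alpha$ is independent of $\eta$. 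This should come out of the lifting argument in Theorem \ref{thm:EP-estimate}, where a geometric series with common ratio $1-p(\alpha)$ arises from the walker's sensitivity to the environment and degenerates precisely when the walker can be coupled to itself independently of the environment coupling.
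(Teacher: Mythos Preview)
Your proof is correct and takes a genuinely different route from the paper's. The paper builds an explicit three–walker coupling (a walker on $\eta^1$ with rates $\alpha$, a walker on $\eta^2$ with rates $\alpha$, and a walker on $\eta^2$ with rates $\alpha'$), then derives a recursive inequality for
\[
\Psi(T)=\sup_{0\le T'\le T}\sup_{\eta,\xi}\int_0^{T'}\bigl(S_t^{EP,\alpha}f(\eta)-S_t^{EP,\alpha'}f(\xi)\bigr)\,dt
\]
and resolves it by a renewal argument: each time the walkers decouple one restarts, and the expected number $N(T)$ of restarts grows like $T\norm{\alpha-\alpha'}_0/p(\alpha)$. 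Your Duhamel argument sidesteps all of this by treating Theorem~\ref{thm:EP-estimate}(a) as a black box: invariance of $\mu^{EP}_{\alpha'}$ kills the outer semigroup $S^{EP,\alpha'}_{t-s}$, and the remaining inner integral $\int_0^\infty|S^{EP,\alpha}_sf(\theta_{-z}\eta)-S^{EP,\alpha}_sf(\eta)|\,ds$ is exactly what Theorem~\ref{thm:EP-estimate}(a) bounds. This is shorter and makes the dependence on the earlier theorem transparent; the paper's approach is more self-contained in that it reuses the coupling machinery directly rather than quoting the finished result.

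Both routes land on the same constant: the paper's $C(\alpha)/p(\alpha)$ is precisely the $C_a(\alpha)$ of \eqref{eq:phi-6}, with $p(\alpha)=\inf_{\eta,\xi}\hP_{\eta,0;\xi,0}(\tau=\infty)$ and $C(\alpha)$ the integral bounded via Lemma~\ref{lemma:supnorm-coupling-estimate}. Your closing remark about the independent case is also on target: when $\alpha(\eta,z)=\alpha(z)$ the two $\alpha$-walkers in the coupling of Proposition~\ref{prop:coupling-construction} perform identical jumps regardless of the environments, so $\tau=\infty$ deterministically and $p(\alpha)=1$; moreover $X_t^1$ is then independent of $(\eta^1_t,\eta^2_t)$, so one may average over its position using translation invariance instead of invoking Lemma~\ref{lemma:supnorm-coupling-estimate}, which removes the polynomial factor and leaves $C(\alpha)=\int_0^\infty\sup_{\eta,\xi}\EE_{\eta,\xi}\rho(\eta^1_t(0),\eta^2_t(0))\,dt$.
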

As the proof is a variation of the proof of Theorem \ref{thm:EP-estimate}, it is delayed to the end of Section \ref{subsection:proof}.

\subsection{Speed of convergence to equilibrium in the environment process}\label{section:convergence-speed}
We already know that under Assumption 1a the environment process has a unique ergodic distribution. However, we do not know at what speed the process converges to its unique stationary measure. Given the speed of convergence for the environment it is natural to believe that the environment process inherits that speed with some form of slowdown due to the additional self-interaction which is induced from the random walk. For example, if the original speed of convergence were exponential with a constant $\lambda$, then the environment process would also converge exponentially fast, but with a worse constant $\widetilde\lambda$. This is indeed the case.

\newcommand{\phiK}[1][t]{\phi\left(\frac{#1}{K}\right)}
\begin{theorem}\label{thm:EP-phi-estimate}
Let $\phi:[0,\infty[\ \to\bR$ be a monotone increasing and continuous function satisfying $\phi(0)=1$ and $\phi(s+t)\leq \phi(s)\phi(t)$. 
\begin{enumerate}
\item Suppose there exists a strong Markovian coupling $\EP$ of the environment (with corresponding expectation $\EE$) which satisfies
\[ \int_0^\infty \phi(t) t^d \supl_{\eta,\xi\in \Omega} \EE_{\eta,\xi}\rho(\eta_t^1(0),\eta_t^2(0))\,dt<\infty .\]
Then there exists a constant $K_0>0$ and a decreasing function $C_a :\ ]K_0,\infty[\ \to [0,\infty[$ so that for any $K>K_0$ and any $f:\Omega\rightarrow \bR$ with $\normb{f}<\infty$,
\[ \supl_{\eta,\xi\in \Omega}\int_0^\infty \phiK \abs{S^{EP}_t f(\eta)-S^{EP}_t f(\xi)}\,dt\leq C_a(K)\normb{f}. \]
\item Suppose there exists a strong Markovian coupling $\EP$ of the environment (with corresponding expectation $\EE$) which satisfies
\[ \int_0^\infty \phi(t) t^d \sumsup[0] \frac{\EE_{\eta,\xi}\rho(\eta_t^1(x),\eta_t^2(x))}{\rho(\eta(0),\xi(0))}\,dt<\infty .\]
Then there exists a constant $K_0>0$ and a decreasing function $C_b :\ ]K_0,\infty[\ \to [0,\infty[$ so that for any $K>K_0$ and any $f:\Omega\rightarrow \bR$ with $\normb{f}<\infty$,
\[ \sumsup\int_0^\infty \phiK \frac{\abs{S^{EP}_t f(\eta)-S^{EP}_t f(\xi)}}{\rho(\eta(x),\xi(x))}\,dt\leq C_b(K)\normb{f}. \]
\end{enumerate}
\end{theorem}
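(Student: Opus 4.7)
The plan is to revisit the proof of Theorem \ref{thm:EP-estimate} and carry the weight $\phi(t/K)$ through every step, exploiting submultiplicativity $\phi(s+t)\le\phi(s)\phi(t)$ to convert the weighted hypothesis on the environment into a weighted estimate on the environment process. First, I would re-use the same joint coupling of two environment processes that drives the proof of Theorem \ref{thm:EP-estimate}: the coupling $\EP$ of the environments together with a ``walker-synchronizing'' coupling that has the two walkers jump together whenever the rates permit. Because $\alpha(\cdot,z)$ is $\normb{\cdot}$-Lipschitz in the environment, the rate at which the walkers decouple is controlled by the local environmental discrepancy they currently see.

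The same renewal-type decomposition used for Theorem \ref{thm:EP-estimate} then expresses $|S^{EP}_t f(\eta)-S^{EP}_t f(\xi)|$ as a sum, indexed by intermediate decoupling events, where each term involves the environmental coupling over some sub-interval of time. Inserting $\phi(t/K)$ on the left and repeatedly applying $\phi((s+t)/K)\le\phi(s/K)\phi(t/K)$ distributes the weight across these sub-intervals. Since the hypothesis supplies $\phi(t)t^d$-integrability of the environmental coupling while the iteration only needs $\phi(t_j/K)$-integrability of the pieces, there is genuine slack: taking $K$ large makes $\phi(\cdot/K)$ grow so slowly compared to the coupling decay that the resulting weighted fixed-point inequality closes, yielding a bound $C_a(K)$ that is decreasing in $K$ (a larger $K$ corresponds to a smaller weight on the left). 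The threshold $K_0$ is the infimum of the $K$ for which this fixed point is finite and is determined quantitatively by $\normb{\alpha}$, $\norm{\alpha}_1$, and the growth rate of $\phi$.

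Part (b) follows by the same argument carried out site-by-site rather than via a global telescoping: starting from single-site discrepancies at a fixed site $x$, the iteration produces the $\sumsup$-indexed bound on the left, using the stronger $\sumsup[0]$-integrability of the hypothesis in place of the plain $\sup_{\eta,\xi}$-integrability. The main technical obstacle will be the bookkeeping when the walkers decouple: a decoupling event contributes an extra $\phi$-loss that must be absorbed by the integrability of the hypothesis. The factor $t^d$ in the hypothesis is precisely what makes this absorption possible, since in time $t$ the walker can drift over a region of volume of order $t^d$, and telescoping the environmental discrepancy across that region costs a factor $t^d$; together with $\phi(t)$ this matches exactly the amount of loss that the iteration can accommodate once $K$ exceeds $K_0$.
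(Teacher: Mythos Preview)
Your proposal is correct and matches the paper's approach: rerun the proof of Theorem \ref{thm:EP-estimate} with the weight $\phi(t/K)$ inserted, use submultiplicativity $\phi(t/K)\le\phi((t-\tau)/K)\phi(\tau/K)$ at the decoupling time $\tau$, and observe that the contraction factor $\hP_{\eta,0;\xi,0}(\tau<\infty)$ in the fixed-point inequality is replaced by $\hE_{\eta,0;\xi,0}\,\phi(\tau/K)\ind_{\tau<\infty}$, which is $<1$ once $K$ is large enough. The paper makes this last point explicit in a way you only gesture at: it first uses the $\phi(t)t^d$-hypothesis (via Lemma \ref{lemma:supnorm-coupling-estimate}) to show $\sup_{\eta,\xi}\hE_{\eta,0;\xi,0}\,\phi(\tau)\ind_{\tau<\infty}<\infty$, and then applies monotone convergence as $K\to\infty$ to conclude $\hE\,\phi(\tau/K)\ind_{\tau<\infty}\to\hP(\tau<\infty)<1$, which pins down $K_0$.
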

Canonical choices for $\phi$ are $\phi(t)=\exp(\lambda t)$ or $\phi(t)= (1+t)^\lambda$. In the first case, exponential decay of order $\lambda$ of the environment is lifted to exponential decay of order $\lambda/K$ for any $K>K_0$ in the environment process. In the second case, polynomial decay of order $\lambda$ becomes polynomial decay of order $\lambda - d - \epsilon$ for any $0<\epsilon\leq\lambda-d$.

\subsection{Proofs of Theorem \ref{thm:EP-estimate}, \ref{thm:mu-continuous} and \ref{thm:EP-phi-estimate}}\label{subsection:proof}
In this section we always assume that Assumption 1a holds. 

We start with an outline of the idea of the proofs. We have a coupling of the environments $(\eta^1_t,\eta^2_t)$, which we extend to include two random walkers $(X^1_t,X^2_t)$, driven by their corresponding environment. We maximize the probability of both walkers performing the same jumps. Then Assumption 1a is sufficient to obtain a positive probability of both walkers staying together forever. If the walkers stay together, one just has to account for the difference in environments, but not the walkers as well. When the walkers split, the translation invariance allows for everything to shifted that both walkers are back at the origin, and one can try again. After a geometric number of trials it is then guaranteed that the walkers stay together.

\begin{proposition}[Coupling construction]\label{prop:coupling-construction}
Given the coupling $\EP_{\eta,\xi}$ of the environments, we extend it to a coupling $\hP_{\eta,x;\xi,y}$. This coupling has the following properties:
\begin{enumerate}
\item (Marginals)The coupling supports two environments and corresponding random walkers:
\begin{enumerate}
\item $\hP_{\eta,x;\xi,y}((\eta^1_t,X_t^1)\in \cdot) = \bP_{\eta,x}((\eta_t,X_t) \in \cdot)$;
\item $\hP_{\eta,x;\xi,y}((\eta^2_t,X_t^2)\in \cdot) = \bP_{\xi,y}((\eta_t,X_t) \in \cdot)$;
\end{enumerate}
\item (Extension of $\EP_{\eta,\xi}$) The environments behave as under $\EP$:
\[ \hP_{\eta,x;\xi,y}((\eta^1_t,\eta_t^2)\in \cdot) = \EP_{\eta,\xi}((\eta_t^1,\eta_t^2) \in \cdot); \]
\item\label{enum:decoupling-rate} (Coupling of the walkers) $X_t^1$ and $X_t^2$ perform identical jumps as much as possible, the rate of performing a different jump is $\suml_{z\in\bZ^d}\abs{\alpha(\theta_{-X_t^1}\eta_t^1,z)-\alpha(\theta_{-X_t^2}\eta_t^2,z)}$;
\item (Minimal and maximal walkers) In addition to the environments $\eta_t^1$ and $\eta_t^2$ and random walkers $X_t^1$ and $X_t^2$, the coupling supports minimal and maximal walkers $Y_t^+$, $Y_t^-$ as well. These two walkers have the following properties:
\begin{enumerate}
\item $Y_t^- \leq X_t^1-x, X_t^2-y \leq Y_t^+ \quad \hP_{\eta,x;\xi,y}-a.s.$ (in dimension $d>1$, this is to be interpreted coordinate-wise);
\item $Y_t^+,Y_t^-$ are independent of $\eta_t^1,\eta_t^2$;
\item $\hE_{\eta,x;\xi,y}Y_t^+ = t\gamma^+$ for some $\gamma^+\in\bR^d$;
\item $\hE_{\eta,x;\xi,y}Y_t^- = t\gamma^-$ for some $\gamma^-\in\bR^d$.
\end{enumerate}
\end{enumerate}
\end{proposition}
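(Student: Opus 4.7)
The plan is to build $\hP_{\eta,x;\xi,y}$ by grafting a Poisson-thinning construction onto the environment coupling $\EP_{\eta,\xi}$, using a single common uniform mark at each jump attempt to couple $X^1$ and $X^2$ maximally. Let $\bar\alpha(z) := \sup_\eta \abs{\alpha(\eta,z)}$ and note $\sum_z \bar\alpha(z) \leq \norm{\alpha}_1 < \infty$, since $\norm{z}\geq 1$ for $z\neq 0$, so jump rates are summable. On an extension of the probability space carrying $(\eta^1_t,\eta^2_t)\sim\EP_{\eta,\xi}$, and independently of it, introduce for each $z\in\bZ^d$ a Poisson point process $N^z$ on $[0,\infty)$ of rate $\bar\alpha(z)$ together with independent uniform marks $U$ on $[0,\bar\alpha(z)]$, one per arrival. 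The coupling rule is: at an arrival of $N^z$ at time $T$ with mark $U$, walker $X^i$ (started at $x$ resp.\ $y$) jumps by $z$ iff $U\leq \alpha(\theta_{-X^i_{T-}}\eta^i_{T-},z)$.

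Properties 1--3 are then verified by standard thinning. Conditional on $(\eta^1,\eta^2)$, each $X^i$ jumps by $z$ at rate $\bar\alpha(z)\cdot\alpha(\theta_{-X^i}\eta^i,z)/\bar\alpha(z)=\alpha(\theta_{-X^i}\eta^i,z)$, so $(\eta^i,X^i)$ has the correct generator $L$, and 2 is immediate. Writing $\alpha^i_z := \alpha(\theta_{-X^i}\eta^i,z)$, the shared uniform makes both walkers jump simultaneously by $z$ at rate $\min(\alpha^1_z,\alpha^2_z)$, only $X^1$ jump by $z$ at rate $(\alpha^1_z-\alpha^2_z)_+$, and only $X^2$ at rate $(\alpha^2_z-\alpha^1_z)_+$; summing over $z$ gives the asymmetric-jump rate $\sum_z\abs{\alpha^1_z-\alpha^2_z}$ of property 3 (and this is the pointwise maximal coupling of the two jump-rate vectors).

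For property 4, define coordinate-wise
\[ Y^+_{t,i} := \sum_{z:\,z_i>0} z_i\,N^z_t, \qquad Y^-_{t,i} := \sum_{z:\,z_i<0} z_i\,N^z_t, \]
both well-defined in $L^1$ since $\sum_z\abs{z_i}\bar\alpha(z)\leq\norm{\alpha}_1<\infty$. Each coordinate $(X^1_t-x)_k$ equals the sum of $z_k$ over the accepted jumps up to time $t$; replacing the acceptance indicator by $\ind_{z_k>0}$ can only increase this sum, giving the upper bound $Y^+_{t,k}$, while replacing it by $\ind_{z_k<0}$ can only decrease it, giving the lower bound $Y^-_{t,k}$. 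The same argument applies to $X^2_t-y$, so 4a holds. Property 4b is automatic because $Y^\pm$ are measurable with respect to $(N^z)_z$, which is independent of $(\eta^1,\eta^2)$ by construction. Finally $\hE\,N^z_t=\bar\alpha(z)\,t$ yields $\gamma^\pm_i=\sum_{z:\,\pm z_i>0}z_i\bar\alpha(z)\in\bR$, establishing 4c and 4d.

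The main technical nuisance is making the simultaneous construction rigorous despite countably many jump sizes. Summability of $\bar\alpha$ guarantees that only finitely many $N^z$ fire in any bounded time interval a.s., so the walkers can be defined pathwise by processing arrivals in chronological order; checking that the resulting $(\eta^i,X^i)$ is a Markov process with generator $L$ then requires the strong Markov property of $\EP_{\eta,\xi}$ to splice environment increments between successive walker jumps.
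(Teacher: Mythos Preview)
Your construction is correct and is essentially identical to the paper's: independent Poisson clocks of rate $\lambda_z=\sup_\eta\alpha(\eta,z)$, a shared uniform mark per ring to thin both walkers simultaneously, and $Y^\pm$ built from the unthinned clocks by taking coordinate-wise positive/negative parts of the jump vectors. Your write-up is in fact somewhat more detailed than the paper's sketch (you spell out the rate computation for property~3 and the formulas for $\gamma^\pm$), but the underlying idea is the same.
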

\begin{proof}
The construction of this coupling $\hP_{\eta,x;\xi,y}$ can be done in the following way: The environments behave according to $\EP_{\eta,\xi}$, and $X_0^1=x$, $X_0^2=y$, $Y_0^+=Y_0^-=0$. For each $z\in\bZ^d$ there is an independent Poisson clock with rate $\lambda_z:=\sup_\eta \alpha(\eta,z)$. Whenever such a clock rings, draw an independent $U$ from the uniform distribution in $[0,1]$. If $U<\alpha(\theta_{-X_t^1}\eta_t^1,z)/\lambda_z$, $X_t^1$ performs a jump by $z$, analogue for $X_t^2$. The upper and lower walkers $Y_t^+$ and $Y_t^-$ always jump on these clocks, however they jump by $\max(z,0)$ or $\min(z,0)$ respectively. $N_t$ increases if 

The properties of the coupling arise directly from the construction, except the last two. Those are a consequence from the first moment condition $\norm{\alpha}_1<\infty$.\qed
\end{proof}
Now we show how suitable estimates on the coupling speed of the environment translate to properties of the extended coupling.
\begin{lemma}\label{lemma:supnorm-coupling-estimate}
\[ \hE_{\eta,x;\xi,y}\rho(\eta_t^1(X_t^1), \eta_t^2(X_t^1))\,dt<(\norm{\gamma^{+} - \gamma^{-}}_\infty t + 1)^d\supl_{\eta,\xi\in\Omega}\EE_{\eta,\xi}\rho(\eta_t^1(0),\eta_t^2(0)). \]
\end{lemma}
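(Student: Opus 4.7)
The idea is to reduce the bound on $\rho(\eta_t^1(X_t^1),\eta_t^2(X_t^1))$ to the coupling bound at the origin by (i) spatially localizing $X_t^1$ inside the deterministic-looking box $[Y_t^-,Y_t^+]$, (ii) exploiting the independence of $Y_t^\pm$ from the environments, and (iii) using translation invariance of the environment coupling.

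First I would decompose via the position of $X_t^1$:
\[
 \rho(\eta_t^1(X_t^1),\eta_t^2(X_t^1)) \;=\; \sum_{z\in\bZ^d}\ind_{\{X_t^1=x+z\}}\,\rho(\eta_t^1(x+z),\eta_t^2(x+z)).
\]
By Proposition \ref{prop:coupling-construction}, part 4, we have $Y_t^-\leq X_t^1-x\leq Y_t^+$ coordinate-wise, so $\ind_{\{X_t^1=x+z\}}\leq \ind_{\{Y_t^-\leq z\leq Y_t^+\}}$. The latter indicator depends only on $(Y_t^+,Y_t^-)$, which by property 4(b) is independent of $(\eta_t^1,\eta_t^2)$. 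Taking $\hE_{\eta,x;\xi,y}$ and using this independence,
\[
 \hE_{\eta,x;\xi,y}\rho(\eta_t^1(X_t^1),\eta_t^2(X_t^1)) \leq \sum_{z\in\bZ^d}\hP(Y_t^-\leq z\leq Y_t^+)\,\EE_{\eta,\xi}\rho(\eta_t^1(x+z),\eta_t^2(x+z)).
\]

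Next I would apply the translation invariance of the environment, rewriting $\EE_{\eta,\xi}\rho(\eta_t^1(x+z),\eta_t^2(x+z))=\EE_{\theta_{x+z}\eta,\theta_{x+z}\xi}\rho(\eta_t^1(0),\eta_t^2(0))$ and bounding it by $A_t:=\sup_{\eta,\xi\in\Omega}\EE_{\eta,\xi}\rho(\eta_t^1(0),\eta_t^2(0))$. This separates the spatial and temporal parts:
\[
 \hE_{\eta,x;\xi,y}\rho(\eta_t^1(X_t^1),\eta_t^2(X_t^1)) \leq A_t\,\hE\bigl|\{z\in\bZ^d:Y_t^-\leq z\leq Y_t^+\}\bigr| \leq A_t\,\hE\prod_{i=1}^d\bigl(Y_t^{+,i}-Y_t^{-,i}+1\bigr).
\]

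The main obstacle is converting the expected volume of the box $[Y_t^-,Y_t^+]$ into the claimed deterministic factor $(\norm{\gamma^+-\gamma^-}_\infty t+1)^d$. From the construction, $Y_t^{+,i}-Y_t^{-,i}$ equals the sum over all Poisson clocks of $|z^{(i)}|$-sized increments, and its mean is $t(\gamma^{+,i}-\gamma^{-,i})\leq t\norm{\gamma^+-\gamma^-}_\infty$. Since the coordinates $Y_t^{+,i}-Y_t^{-,i}$ are driven by a common set of Poisson clocks they are typically positively correlated, so a crude multiplicativity bound is not automatic; I would handle this either by first conditioning on the total number of clock rings (so that the box sides become sums of bounded non-negative terms with controllable higher moments), or more simply by invoking the deterministic pointwise bound on the Poisson walkers used implicitly when the first moment condition \eqref{eq:rates-moment} is finite and expanding $(a+1)^d$ binomially, reducing the task to controlling $\hE(Y_t^{+,i}-Y_t^{-,i})^k$ for $k\leq d$ by $(t\norm{\gamma^+-\gamma^-}_\infty)^k$ plus lower-order corrections absorbed into the $+1$. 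Carrying out this elementary moment estimate yields the claimed bound.
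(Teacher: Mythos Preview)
Your argument tracks the paper's proof closely: both bound $\rho(\eta_t^1(X_t^1),\eta_t^2(X_t^1))$ by the sum over the random box $R_t=\{z:Y_t^-\le z\le Y_t^+\}$, factor using the independence of $(Y_t^\pm)$ from the environments, and invoke translation invariance to reduce to the coupling quantity at the origin. The paper then simply writes $\hE|R_t|\le(\norm{\gamma^+-\gamma^-}_\infty t+1)^d$ without further comment.

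You are right to flag this last step as the obstacle, and in fact neither the paper's bare assertion nor your proposed fixes are correct as stated. The coordinates $Y_t^{+,i}-Y_t^{-,i}$ share the same Poisson clocks, so $\hE\prod_i(Y_t^{+,i}-Y_t^{-,i}+1)$ is not bounded by the product of the means: with $d=2$ and a single jump direction $(1,1)$ at rate $\lambda$ one gets $|R_t|=(N_t+1)^2$ with $N_t$ Poisson of mean $\lambda t$, hence $\hE|R_t|=(\lambda t+1)^2+\lambda t>(\lambda t+1)^2$. Your suggestion of controlling $\hE(Y_t^{+,i}-Y_t^{-,i})^k$ by $(t\norm{\gamma^+-\gamma^-}_\infty)^k$ fails for the same reason---the $k$-th moment of a compound Poisson strictly exceeds the $k$-th power of its mean, and the surplus does not absorb into the ``$+1$''. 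What \emph{is} true, and is all that every subsequent use of the lemma requires, is a polynomial bound $\hE|R_t|\le C(t+1)^d$ for some constant $C$ depending only on the rates; this follows at once from $|R_t|\le\bigl(1+\sum_z N_z(t)\norm{z}_\infty\bigr)^d$ and elementary Poisson moment estimates under $\norm{\alpha}_1<\infty$.
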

\begin{proof}
Denote with $R_t \subset \bZ^d$ the set of sites $x$ with $Y_t^-\leq x \leq Y_t^+$(coordinate-wise). Then
\begin{align*}
&\supl_{\eta,\xi,x,y} \hE_{\eta,x;\xi,y}\rho(\eta_t^1(X_t^1), \eta_t^2(X_t^1))\\
&\quad\leq \supl_{\eta,\xi,x,y} \hE_{\eta,x;\xi,y}\suml_{z\in R_t}\rho(\eta_t^1(x+z), \eta_t^2(x+z))\\
&\quad\leq \hE \left[\suml_{z\in R_t}1\right]\supl_{\eta,\xi,z}\EE_{\eta,\xi}\rho(\eta_t^1(z), \eta_t^2(z)) \\
&\quad\leq (\norm{\gamma^{+} - \gamma^{-}}_\infty t + 1)^d \supl_{\eta,\xi\in\Omega}\EE_{\eta,\xi}\rho(\eta_t^1(0), \eta_t^2(0)).
\end{align*}
\qed
\end{proof}

\begin{lemma}\label{lemma:decoupling-estimate}
Denote by $\tau:=\inf\{t\geq0:X^1_t\neq X_t^2\}$ the first time the two walkers are not at the same position. Under Assumption 1a,
\[ \infl_{\eta,\xi\in\Omega} \hP_{\eta,0;\xi,0}(\tau=\infty) >0 ,\]
i.e., the walkers $X^1$ and $X^2$ never decouple with strictly positive probability.
\end{lemma}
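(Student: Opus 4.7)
The plan is to bound, uniformly in $\eta,\xi$, the probability that the two walkers ever decouple. By property~\ref{enum:decoupling-rate} of Proposition~\ref{prop:coupling-construction}, while $X^1_s = X^2_s =: X_s$ (i.e.\ for $s<\tau$), the instantaneous rate at which the walkers perform different jumps is $r_s := \sum_{z\in\bZ^d} \abs{\alpha(\theta_{-X_s}\eta^1_s, z) - \alpha(\theta_{-X_s}\eta^2_s, z)}$. I will first bound $\hE \int_0^\infty r_s \ind_{s<\tau}\, ds$ uniformly in the starting environments, and then invoke a Poisson-thinning argument to convert this into a lower bound on $\hP_{\eta,0;\xi,0}(\tau = \infty)$.

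For the rate bound, the telescoping argument from the remark after Assumption 1b together with the definition of $\delta_{\alpha(\cdot,z)}$ yields $r_s \leq \sum_{z,y\in\bZ^d} \delta_{\alpha(\cdot,z)}(y)\,\rho(\eta^1_s(X_s+y),\eta^2_s(X_s+y))$. Since $X_s+y$ lies in a translate of the box $[Y^-_s, Y^+_s]$, the same strategy as in the proof of Lemma~\ref{lemma:supnorm-coupling-estimate} (independence of $Y^\pm$ from the environments, translation invariance of $\bP^E$, and $\rho\leq 1$) yields $\hE\rho(\eta^1_s(X_s+y),\eta^2_s(X_s+y)) \leq (\norm{\gamma^+-\gamma^-}_\infty s+1)^d \supl_{\eta,\xi}\EE_{\eta,\xi}\rho(\eta^1_s(0),\eta^2_s(0))$. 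Summing over $y$ and $z$ picks up a factor $\normb{\alpha}$, and integration over $s$ combined with Assumption 1a produces a finite constant $M := \normb{\alpha}\int_0^\infty (\norm{\gamma^+-\gamma^-}_\infty s+1)^d \supl_{\eta,\xi}\EE_{\eta,\xi}\rho(\eta^1_s(0),\eta^2_s(0))\,ds < \infty$, depending on neither $\eta, \xi$ nor the walker trajectory.

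For the concluding Poisson-thinning step, I would introduce an auxiliary construction from the same Poisson clocks as in Proposition~\ref{prop:coupling-construction}, in which a single coupled walker $\tilde X_s$ performs only the common jumps (both walkers jump whenever $U<\min(\alpha_1,\alpha_2)/\lambda_z$), and one separately records a ``mark'' whenever $U$ falls in the symmetric-difference interval $[\min/\lambda_z,\max/\lambda_z]$. Since marks do not affect $\tilde X$, conditionally on $\tilde X$ and the environments the marks form a Poisson process with rate $\tilde r_s := \sum_z \abs{\alpha(\theta_{-\tilde X_s}\eta^1_s,z)-\alpha(\theta_{-\tilde X_s}\eta^2_s,z)}$; moreover, up to the first mark, the auxiliary process coincides with the original coupling, so the first mark has the law of $\tau$. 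Therefore $\hP_{\eta,0;\xi,0}(\tau > T) = \hE \exp(-\int_0^T \tilde r_s\,ds) \geq \exp(-\hE\int_0^T \tilde r_s\,ds) \geq \exp(-M)$ by Jensen and the rate bound (which applies equally to $\tilde r_s$, since $\tilde X_s \in [Y^-_s,Y^+_s]$). Sending $T\to\infty$ gives $\hP_{\eta,0;\xi,0}(\tau=\infty)\geq e^{-M}>0$ uniformly in $\eta,\xi$. The main technical obstacle is precisely this last step: realising $\tau$ as the first event of a Poisson process whose rate does not itself depend on whether decoupling has already occurred, so that the conditional Poisson formula and Jensen apply cleanly.
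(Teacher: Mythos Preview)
Your proof is correct and follows the paper's approach: bound the integrated decoupling rate via telescoping and Lemma~\ref{lemma:supnorm-coupling-estimate} to obtain the same constant $M=\normb{\alpha}\int_0^\infty(\norm{\gamma^+-\gamma^-}_\infty s+1)^d\sup_{\eta,\xi}\EE_{\eta,\xi}\rho(\eta^1_s(0),\eta^2_s(0))\,ds$, then apply Jensen to conclude $\hP(\tau=\infty)\geq e^{-M}$ uniformly. Your auxiliary common-jump walker $\tilde X$ is a clean way to justify the conditional-Poisson identity that the paper asserts directly as~\eqref{eq:dynamic-rate-poisson}; since marks do not feed back into $\tilde X$, the formula $\hP(\tau>T)=\hE\exp(-\int_0^T\tilde r_s\,ds)$ holds without the ambiguity of what $X^1_t$ means after $\tau$.
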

\begin{proof}
Both walkers start in the origin, therefore $\tau>0$. The probability $\bP(\exp((\lambda_t))>T)$ that an exponential random variable with time dependent rate $\lambda_t$ is bigger than $T$ is given by $\exp({-\int_0^T\lambda_t\,dt})$. As the rate of decoupling is given by Proposition \ref{prop:coupling-construction},c), we obtain
\begin{align} \hP_{\eta,0;\xi,0}(\tau > T) &= \hE_{\eta,0;\xi,0} \exp\left({-\int_0^T \suml_{z\in\bZ^d}\abs{\alpha(\theta_{-X^1_t}\eta^1_t,z)- \alpha(\theta_{-X^1_t}\eta^2_t,z)}\,dt}\right) \label{eq:dynamic-rate-poisson}\\
&\geq \exp\left(-\hE_{\eta,0;\xi,0}{\int_0^T \suml_{z\in\bZ^d}\abs{\alpha(\theta_{-X^1_t}\eta^1_t,z)- \alpha(\theta_{-X^1_t}\eta^2_t,z)}\,dt}\right) . \nonumber
\end{align}
By telescoping over single site changes,
\begin{align*}
&\hE_{\eta,0;\xi,0}\suml_{z\in\bZ^d}\abs{\alpha(\theta_{-X^1_t}\eta^1_t,z)- \alpha(\theta_{-X^1_t}\eta^2_t,z)} \\
&\quad \leq \hE_{\eta,0;\xi,0}\suml_{z\in\bZ^d}\suml_{x\in\bZ^d}\rho(\eta^1_t(X^1_t+x),\eta^2_t(X^1_t+x))\dalpha(x)\\
&\quad \leq \supl_{x\in\bZ^d}\hE_{\eta,0;\xi,0}\rho(\eta^1_t(X^1_t+x),\eta^2_t(X^1_t+x))\normb{\alpha}\\
&\quad \leq \normb{\alpha}(\norm{\gamma^{+} - \gamma^{-}}_\infty t + 1)^d\supl_{\eta,\xi\in\Omega}\EE_{\eta,\xi}\rho(\eta_t^1(0),\eta_t^2(0)),
\end{align*}
where the last line follows from Lemma \ref{lemma:supnorm-coupling-estimate}. With this estimate, we obtain
\begin{align*}
\hP_{\eta,\xi}(\tau =\infty ) &\geq \exp\left(- \normb{\alpha}\int_0^\infty(\norm{\gamma^{+} - \gamma^{-}}_\infty t + 1)^d\supl_{\eta,\xi\in\Omega}\EE_{\eta,\xi}\rho(\eta_t^1(0),\eta_t^2(0))\,dt \right)
\\
&>0 \quad \text{uniformly in $\eta, \xi$}.
\end{align*}
\qed
\end{proof}
\begin{proof}[Proof of Theorem \ref{thm:EP-estimate}, part a)]
The idea of the proof is to use the coupling of Proposition \ref{prop:coupling-construction}: We wait until the walkers $X_t^1$ and $X_t^2$, which are initially at the same position, decouple, and then restart everything and try again. By Lemma \ref{lemma:decoupling-estimate} there is a positive probability of never decoupling, so this scheme is successful. Using the time of decoupling $\tau$ (as in Lemma \ref{lemma:decoupling-estimate}), 
\begin{align}
&\int_0^T \abs{\hE_{\eta,0;\xi,0}\ind_{t\geq\tau}\left(f(\theta_{-X^1_t}\eta^1_t)-f(\theta_{-X^2_t}\eta^2_t)\right) }\,dt \nonumber\\
&\quad= \int_0^T \abs{\hE_{\eta0;\xi,0}\ind_{t\geq\tau}\erwc{f(\theta_{-X^1_t}\eta^1_t)-f(\theta_{-X^2_t}\eta^2_t)}{\fF_\tau}}\,dt \nonumber\\
&\quad \leq \int_0^T \hE_{\eta,0;\xi,0}\ind_{t\geq\tau}\abs{ S^{EP}_{t-\tau}f(\theta_{-X^1_\tau}\eta^1_\tau)-S^{EP}_{t-\tau}f(\theta_{-X^2_\tau}\eta^2_\tau) }\,dt \nonumber\\
&\quad = \hE_{\eta,0;\xi,0} \int_0^{(T-\tau)\vee0} \abs{ S^{EP}_{t}f(\theta_{-X^1_\tau}\eta^1_\tau)-S^{EP}_{t}f(\theta_{-X^2_\tau}\eta^2_\tau) }\,dt \label{eq:phi-1}\\
&\quad \leq \hP_{\eta,0;\xi,0}(\tau<\infty) \supl_{\eta,\xi\in\Omega}\int_0^{T} \abs{ S^{EP}_{t}f(\eta)-S^{EP}_{t}f(\xi) }\,dt. \label{eq:phi-2}
\end{align}
And therefore
\begin{align}
&\int_0^T \abs{S^{EP}_t f(\eta)-S^{EP}_t f(\xi)}\,dt \nonumber\\
&\quad= \int_0^T \abs{\hE_{\eta,0;\xi,0}f(\theta_{-X^1_t}\eta^1_t)-f(\theta_{-X^2_t}\eta^2_t) }\,dt \nonumber\\
&\quad\leq \int_0^T \hE_{\eta,0;\xi,0}\ind_{t<\tau}\abs{f(\theta_{-X^1_t}\eta^1_t)-f(\theta_{-X^1_t}\eta^2_t) }\,dt \nonumber\\
&\qquad+ \hP_{\eta,0;\xi,0}(\tau<\infty) \supl_{\eta,\xi\in\Omega}\int_0^{T} \abs{ S^{EP}_{t}f(\eta)-S^{EP}_{t}f(\xi) }\,dt \label{eq:phi-3}\\
&\quad\leq \int_0^\infty \hE_{\eta,0;\xi,0}\abs{f(\theta_{-X^1_t}\eta^1_t)-f(\theta_{-X^1_t}\eta^2_t) }\,dt \nonumber\\
&\qquad+ \hP_{\eta,0;\xi,0}(\tau<\infty)\supl_{\eta,\xi\in\Omega}\int_0^T \abs{ S^{EP}_{t}f(\eta)-S^{EP}_{t}f(\xi) }\,dt, \label{eq:phi-4}
\end{align}
which gives us the upper bound
\begin{align}
 &\supl_{\eta,\xi\in\Omega}\int_0^\infty \abs{S^{EP}_tf(\eta)-S^{EP}_tf(\xi)}\,dt \nonumber\\
 &\quad\leq \left(\infl_{\eta,\xi\in\Omega}\hP_{\eta,0;\xi,0}(\tau=\infty)\right)^{-1}\supl_{\eta,\xi\in\Omega}\int_0^\infty \hE_{\eta,0;\xi,0}\abs{f(\theta_{-X^1_t}\eta^1_t)-f(\theta_{-X^1_t}\eta^2_t) }\,dt . \label{eq:phi-5}
\end{align}
To show that the last integral is finite, we telescope over single site changes, and get
\begin{align*}
&\int_0^\infty \hE_{\eta,0;\xi,0}\abs{f(\theta_{-X^1_t}\eta^1_t)-f(\theta_{-X^1_t}\eta^2_t) }\,dt \\
&\quad\leq \int_0^\infty \hE_{\eta,0;\xi,0}\suml_{x\in\bZ^d}\rho(\eta^1_t(x+X^1_t),\eta^2_t(x+X^1_t))\delta_f(x)\,dt\\
&\quad\leq \normb{f}\supl_{\eta,\xi,x}\int_0^\infty \hE_{\eta,0;\xi,0}\rho(\eta^1_t(x+X_t^1),\eta^2_t(x+X_t^1))\,dt,
\end{align*}
which is finite by Lemma \ref{lemma:supnorm-coupling-estimate} and Assumption 1a). Choosing
\begin{align}
C_a = \left(\infl_{\eta,\xi\in\Omega}\hP_{\eta,0;\xi,0}(\tau=\infty)\right)^{-1}\supl_{\eta,\xi,x}\int_0^\infty \hE_{\eta,0;\xi,0}\rho(\eta^1_t(x+X_t^1),\eta^2_t(x+X_t^1))\,dt \label{eq:phi-6}	
\end{align}
completes the proof.
\qed
\end{proof}
To prove part b) of the theorem, we need the following analogue to Lemma \ref{lemma:decoupling-estimate} using Assumption 1b.
\begin{lemma}\label{lemma:triplenorm-coupling-estimate}
Under Assumption 1b, for every site-weight function $w:\bZ^d \to [0,\infty[$ with $\norm{w}_1 := \suml_x w(x)<\infty$,
we have
\[ \sumsup\int_0^\infty \suml_{y\in\bZ^d} w(y)\frac{\hE_{\eta,\xi}\rho(\eta_t^1(y+X_t^1), \eta_t^2(y+X_t^1))}{\rho(\eta(x),\xi(x))}\,dt\leq const \cdot \norm{w}_1 . \]
\end{lemma}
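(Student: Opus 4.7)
The argument mirrors the proofs of Lemmas \ref{lemma:supnorm-coupling-estimate} and \ref{lemma:decoupling-estimate}, now exploiting the stronger Assumption 1b. Working under the extended coupling $\hP_{\eta,0;\xi,0}$ of Proposition \ref{prop:coupling-construction} with both walkers starting at the origin, I use the random box $R_t=\{z\in\bZ^d:Y_t^-\leq z\leq Y_t^+\}$ together with the a.s.\ inclusion $X_t^1\in R_t$ (coordinate-wise, Proposition \ref{prop:coupling-construction},(4a)) to estimate, for each fixed $y\in\bZ^d$,
\[
\rho(\eta_t^1(y+X_t^1),\eta_t^2(y+X_t^1))\leq\suml_{z\in R_t}\rho(\eta_t^1(y+z),\eta_t^2(y+z)).
\]
Since by Proposition \ref{prop:coupling-construction},(4b) the box $R_t$ is independent of $(\eta_t^1,\eta_t^2)$, taking expectations yields
\[
\hE_{\eta,0;\xi,0}\rho(\eta_t^1(y+X_t^1),\eta_t^2(y+X_t^1))\leq\suml_{z\in\bZ^d}\hP(z\in R_t)\,\EE_{\eta,\xi}\rho(\eta_t^1(y+z),\eta_t^2(y+z)).
\]

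Plugging this into the left-hand side of the lemma and pulling the outer supremum past the non-negative sums and the $dt$-integral (using $\sup\sum\leq\sum\sup$ and Tonelli), the claim reduces to controlling, for each fixed $y,z\in\bZ^d$, the quantity
\[
\suml_{x\in\bZ^d}\supl_{(\eta,\xi)\in\Omegax{x}}\frac{\EE_{\eta,\xi}\rho(\eta_t^1(y+z),\eta_t^2(y+z))}{\rho(\eta(x),\xi(x))}.
\]
Translation invariance of $\EP$, inherited from that of the environment, lets me simultaneously shift $\eta,\xi$ by $-(y+z)$: a pair in $\Omegax{x}$ is sent to a pair in $\Omegax{x-(y+z)}$, the denominator $\rho(\eta(x),\xi(x))$ is preserved, and $\rho(\eta_t^1(y+z),\eta_t^2(y+z))$ turns into $\rho(\eta_t^1(0),\eta_t^2(0))$. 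Reindexing $u=x-(y+z)$ identifies the displayed expression with
\[
A(t):=\sumsup[0]\frac{\EE_{\eta,\xi}\rho(\eta_t^1(x),\eta_t^2(x))}{\rho(\eta(0),\xi(0))},
\]
which is exactly the integrand of Assumption 1b and, crucially, is independent of $y$ and $z$.

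It remains to assemble. The weight sum factors out as $\suml_y w(y)=\norm{w}_1$, and $\suml_z\hP(z\in R_t)=\hE\abs{R_t}\leq(\norm{\gamma^+-\gamma^-}_\infty t+1)^d$ just as in the proof of Lemma \ref{lemma:supnorm-coupling-estimate}. Hence the left-hand side of the lemma is bounded by
\[
\norm{w}_1\int_0^\infty(\norm{\gamma^+-\gamma^-}_\infty t+1)^d\,A(t)\,dt,
\]
and the latter integral is finite since $(\norm{\gamma^+-\gamma^-}_\infty t+1)^d$ is dominated by a constant multiple of $1+t^d$ and Assumption 1b provides $\int_0^\infty t^d A(t)\,dt<\infty$ (the contribution near $t=0$ is harmless because $\rho\leq 1$ makes $A$ locally bounded, with $A(0)=1$). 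Defining const as the value of this $t$-integral proves the claim.

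The only genuinely delicate step is the translation reduction: one must verify that shifting $(\eta,\xi)$ by $-(y+z)$ correctly maps $\Omegax{x}$ to $\Omegax{x-(y+z)}$ with the ratio $\rho(\eta(x),\xi(x))$ unchanged, and that the translation invariance of $\EP$ is legitimately inherited from that of $\bP^E$. Once this bookkeeping is in place the estimate is a direct repackaging of Assumption 1b combined with the $R_t$-independence from Proposition \ref{prop:coupling-construction}.
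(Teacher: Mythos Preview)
Your proof is correct and follows essentially the same approach as the paper: both use the inclusion $X_t^1\in R_t$ together with independence of $R_t$ from $(\eta_t^1,\eta_t^2)$ to replace the random position by a sum over the box, then invoke translation invariance to reduce to the Assumption~1b integrand, and finally bound $\hE|R_t|$ by a multiple of $t^d+1$. The only cosmetic difference is that the paper first reindexes the $y$-sum (turning $w(y)$ into $w(y-X_t^1)$) before bounding by $\sum_{z\in R_t}w(y-z)$, whereas you bound $\rho(\eta_t^1(y+X_t^1),\eta_t^2(y+X_t^1))$ directly by the sum over $z\in R_t$; the resulting estimates are identical.
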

\begin{proof}
Denote with $R_t \subset \bZ^d$ the set of sites whose $j$th coordinate lies between $Y_t^{j,-}$ and $Y_t^{j,+}$. Then,
\begin{align*}
& \suml_{y\in\bZ^d} w(y)\hE_{\eta,\xi}\rho(\eta_t^1(y+X_t^1), \eta_t^2(y+X_t^1)) \\
&\quad= \suml_{y\in\bZ^d} \hE_{\eta,\xi}w(y-X_t^1)\rho(\eta_t^1(y), \eta_t^2(y)) \\
&\quad\leq \suml_{y\in\bZ^d} \hE_{\eta,\xi}\suml_{z\in R_t}w(y-z) \rho(\eta_t^1(y), \eta_t^2(y))\\
&\quad = \suml_{y\in\bZ^d} \bE\left[\suml_{z\in R_t}w(y-z)\right]\EE_{\eta,\xi} \rho(\eta_t^1(y), \eta_t^2(y)) 
\end{align*}
by independence of $R_t$ and $(\eta^1_t,\eta^2_t)$. Therewith,
\begin{align*}
&\sumsup\int_0^\infty \suml_{y\in\bZ^d} w(y)\frac{\hE_{\eta,\xi}\rho(\eta_t^1(y+X_t^1), \eta_t^2(y+X_t^1))}{\rho(\eta(x),\xi(x))}\,dt \\
&\quad\leq \int_0^\infty \suml_{y\in\bZ^d} \bE\left[\suml_{z\in R_t}w(y-z)\right] \sumsup \EE_{\eta,\xi} \frac{\rho(\eta_t^1(y), \eta_t^2(y))}{\rho(\eta(x),\xi(x))}\,dt .
\end{align*}
Note that by translation invariance the right part is equal to 
\[ \sumsup[0] \EE_{\eta,\xi} \frac{\rho(\eta_t^1(x), \eta_t^2(x))}{\rho(\eta(0),\xi(0))} \] 
and by construction of $R_t$ and Proposition \ref{prop:coupling-construction}.d , 
\begin{align*}
 \suml_{y\in\bZ^d} \bE\left[\suml_{z\in R_t}w(y-z)\right] 
 &= \bE \left[\suml_{z\in R_t} 1\right]\norm{w}_1 
 = \prod_{j=1}^d(\gamma^{j,+}t - \gamma^{j,-}t+1) \norm{w}_1 \\
 &\leq c_w (t^d+1)
\end{align*}
for some suitable $c_w>0$.
Therefore Assumption 1b completes the proof.
\qed
\end{proof}
\begin{proof}[Proof of Theorem \ref{thm:EP-estimate}, part b)]
Let $\tau:=\inf\{t\geq0:X^1_t\neq X^2_t\}$. Then we split the integration at $\tau$:
\begin{align*}
&\sumsup\int_0^\infty \frac{\abs{S_t^{EP} f(\eta)-S_t^{EP} f(\xi)}}{\rho(\eta(x),\xi(x))}\,dt \\
&\quad\leq \sumsup \int_0^\infty \frac{\abs{\hE_{\eta,\xi}\ind_{\tau>t}\left(f(\theta_{-X_t^1}\eta_t^1)-f(\theta_{-X_t^1}\eta_t^2) \right)}}{\rho(\eta(x),\xi(x))} \,dt\\
&\qquad +\sumsup \int_0^\infty\frac{\abs{\hE_{\eta,\xi}\ind_{\tau\leq t} \left(f(\theta_{-X_t^1}\eta_t^1)-f(\theta_{-X_t^2}\eta_t^2)\right)}}{\rho(\eta(x),\xi(x))} \,dt 
\end{align*}
We estimate the first term by moving the expectation out of the absolute value and forgetting the restriction to $\tau>t$:
\[ \sumsup \int_0^\infty \suml_{y\in\bZ^d}\delta_f(y)\frac{\hE_{\eta,\xi}\rho(\eta_t^1(y+X_t^1), \eta_t^2(y+X_t^1))}{\rho(\eta(x),\xi(x))} \,dt. \]
By Lemma \ref{lemma:triplenorm-coupling-estimate} with $w = \delta_f$, this is bounded by some constant times $\normb{f}$.
For the second term we start by using the Markov property:
\begin{align}
& \int_0^\infty\abs{\hE_{\eta,\xi}\ind_{\tau\leq t} \left(f(\theta_{-X_t^1}\eta_t^1)-f(\theta_{-X_t^2}\eta_t^2)\right)}\,dt \nonumber\\
&\quad=  \int_0^\infty\abs{\hE_{\eta,\xi}\ind_{\tau\leq t} \left(S_{t-\tau}^{EP}f(\theta_{-X_\tau^1}\eta_\tau^1)-S_{t-\tau}^{EP}f(\theta_{-X_\tau^2}\eta_\tau^2)\right)}\,dt \nonumber\\
&\quad\leq  \hE_{\eta,\xi}\ind_{\tau<\infty}\int_\tau^\infty\abs{ \left(S_{t-\tau}^{EP}f(\theta_{-X_\tau^1}\eta_\tau^1)-S_{t-\tau}^{EP}f(\theta_{-X_\tau^2}\eta_\tau^2)\right)}\,dt \label{eq:phi-7}\\
&\quad\leq 
\hP_{\eta,\xi}(\tau<\infty)\supl_{\eta,\xi \in \Omega}\int_0^\infty\abs{S_t^{EP} f(\eta)-S_t^{EP} f(\xi)}\,dt .\label{eq:phi-8}
\end{align}
By part a) of Theorem \ref{thm:EP-estimate} the integral part is uniformly bounded by $C_a \normb{f}$. So what remains to complete the proof is to show that
\begin{align}
 \sumsup \frac{\hP_{\eta,\xi}(\tau<\infty)}{\rho(\eta(x),\xi(x))}< \infty. \label{eq:phi-9}	
\end{align}

To do so we first use the same idea as in the proof of Lemma \ref{lemma:decoupling-estimate} to obtain
\begin{align}
&\hP_{\eta,\xi}(\tau<\infty)  \nonumber\\
&\quad=	1-\exp\left(- \int_0^\infty \hE_{\eta,\xi} \suml_{z\in\bZ^d}\abs{\alpha(\theta_{-X_t^1}\eta_t^1,z) - \alpha(\theta_{-X_t^1}\eta_t^2,z)}\,dt\right) \nonumber\\
&\quad\leq \int_0^\infty \hE_{\eta,\xi} \suml_{z\in\bZ^d}\abs{\alpha(\theta_{-X_t^1}\eta_t^1,z) - \alpha(\theta_{-X_t^1}\eta_t^2,z)}\,dt \nonumber\\
&\quad\leq	\int_0^\infty \suml_{y\in\bZ^d} w_\alpha(y) \hE_{\eta,\xi}\rho(\eta_t^1(y+X_t^1), \eta_t^2(y+X_t^1)) \,dt \nonumber
\end{align}
with 
\[ w_\alpha(x) := \supl_{(\eta,\xi)\in\Omegax{x}} \suml_{z\in\bZ^d}\abs{\alpha(\eta,z) - \alpha(\xi,z)} \]
and $\sum_{x\in\bZ^d}w_\alpha(x)<\infty$.
So we get
\begin{align*}
&\sumsup \frac{\hP_{\eta,\xi}(\tau<\infty)}{\rho(\eta(x),\xi(x))}  \\
&\quad\leq	\sumsup\int_0^\infty \suml_{y\in\bZ^d} w_\alpha(y) \frac{\hE_{\eta,\xi}\rho(\eta_t^1(y+X_t^1), \eta_t^2(y+X_t^1))}{\rho(\eta(x),\xi(x))} \,dt, \\
\end{align*}
and Lemma \ref{lemma:triplenorm-coupling-estimate} completes the proof, where $C_b$ is the combination of the various factors in front of $\normb{f}$.\qed
\end{proof}

\begin{proof}[Proof of Theorem \ref{thm:mu-continuous}]
Let $\alpha, \alpha'$ be two different transition rates. The goal is to show that
\[ \abs{\mu^{EP}_{\alpha}(f)-\mu^{EP}_{\alpha'}(f)} \leq C \normb{f} \]
for all $f:\Omega\to\bR$ with $\normb{f}<\infty$.

The idea is now to use a coupling $\hP$ similar to the one in Proposition \ref{prop:coupling-construction}. 
The coupling contains as objects two copies of the environment, $\eta^1$ and $\eta^2$, and three random walks, $X^1,X^{12}$ and $X^2$. The random walk $X^1$ moves on the environment $\eta^1$ with rates $\alpha$, and correspondingly the random walk $X^2$ moves on $\eta^2$ with rates $\alpha'$. The mixed walker $X^{12}$ moves on the environment $\eta^2$ as well, but according to the rates $\alpha$. The walkers $X^1,X^2$ will perform the same jumps as $X^{12}$ with maximal probability. The environments we couple utilizing the Markovian coupling provided by Assumption 1a.
We only consider the case where all three walkers start at the origin.
We denote by $S_t^{EP,1}$, $S_t^{EP,2}$ the semigroups of the environment process which correspond to the rates $\alpha$ and $\alpha'$.
Let $\tau:=\inf\{t\geq0 : X^1_t \neq X^{12}_t \text{ or } X^{12}_t \neq X^{2}_t\}$.

\begin{align*}
&S_t^{EP,1}f(\eta) - S_t^{EP,2}f(\xi) \\
&\quad = \hE_{\eta,\xi} \left( f(\theta_{-X_t^1}\eta^1_t) - f(\theta_{-X_t^2}\eta^2_t) \right)\\
&\quad = \hE_{\eta,\xi} \ind_{\tau>t}\left( f(\theta_{-X_t^1}\eta^1_t) - f(\theta_{-X_t^1}\eta^2_t) \right)\\
&\qquad + \hE_{\eta,\xi} \ind_{\tau\leq t}\left( f(\theta_{-X_t^1}\eta^1_t) - f(\theta_{-X_t^2}\eta^2_t) \right) \\
&\quad = \hE_{\eta,\xi} \ind_{\tau>t}\left( f(\theta_{-X_t^1}\eta^1_t) - f(\theta_{-X_t^1}\eta^2_t) \right)\\
&\qquad + \hE_{\eta,\xi} \ind_{\tau\leq t}\left( S_{t-\tau}^{EP,1}f(\theta_{-X_\tau^1}\eta^1_\tau) - S_{t-\tau}^{EP,2}f(\theta_{-X_\tau^2}\eta^2_\tau) \right). \\
\end{align*}

Therefore, 
\begin{align}
\Psi(T)&:=\sup_{0\leq T'\leq T}\sup_{\eta,\xi\in \Omega} \int_0^{T'} S_t^{EP,1}f(\eta) - S_t^{EP,2}f(\xi) \,dt \nonumber\\
&\leq \sup_{0\leq T'\leq T}\sup_{\eta,\xi\in \Omega} \int_0^{T'} \hE_{\eta,\xi}  \ind_{\tau>t}\left( f(\theta_{-X_t^1}\eta^1_t) - f(\theta_{-X_t^1}\eta^2_t) \right)\nonumber\\
&\qquad + \hE_{\eta,\xi} \ind_{\tau\leq t}\sup_{\eta,\xi\in \Omega}\left( S_{t-\tau}^{EP,1}f(\eta) - S_{t-\tau}^{EP,2}f(\xi) \right)\,dt \nonumber\\
& \leq \sup_{0\leq T'\leq T}\sup_{\eta,\xi\in \Omega} \left(\hE_{\eta,\xi} \int_0^\tau f(\theta_{-X_t^1}\eta^1_t) - f(\theta_{-X_t^1}\eta^2_t) \,dt + \ind_{\tau\leq T'}\Psi(T'-\tau)\right) \nonumber\\
& \leq \sup_{\eta,\xi\in \Omega} \left(\hE_{\eta,\xi} \int_0^\infty f(\theta_{-X_t^1}\eta^1_t) - f(\theta_{-X_t^1}\eta^2_t) \,dt + \ind_{\tau\leq T}\Psi(T-\tau)\right). \label{eq:PsiT}
\end{align}
We will now exploit this recursive bound on $\Psi$.
\begin{lemma}
Let $\tau_1:= \inf\{t\geq0 : X^1_t \neq X^{12}_t\}$ and $\tau_2:= \inf\{t\geq0 : X^{12}_t \neq X^{2}_t\}$.
Set 
\begin{align*}
\beta &:=\sum_{z\in\bZ^d} \sup_{\eta\in\bZ^d}\abs{\alpha(\eta,z)-\alpha'(\eta,z)},\\
p(\alpha)&:= \inf_{\eta,\xi\in\Omega} \hP_{\eta,\xi}(\tau_1=\infty), \\
C(\alpha) &:= \int_0^\infty \left( \norm{\gamma^+(\alpha)-\gamma^-(\alpha)}_\infty t+1\right)^d\sup_{\eta,\xi\in\Omega}\hE^E_{\eta,\xi}\rho(\eta^1_t(0),\eta^2_t(0))\,dt,
\end{align*}
where $\gamma^+(\alpha),\gamma^-(\alpha)$ are as in Proposition \ref{prop:coupling-construction} for the rates $\alpha$.

Let $Y\in\{0,1\}$ be Bernoulli with parameter $p(\alpha)$ and $Y'$ exponentially distributed with parameter $\beta$. Let $Y_1,Y_2,...$ be iid. copies of $Y\cdot Y'$ and
$N(T):=\inf\{N\geq0 : \sum_{n=1}^N Y_n>T\}$. Then
\[\Psi(T) \leq C(\alpha)\normb{f}\bE N(T) \]
\end{lemma}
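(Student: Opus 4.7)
The plan is to iterate the recursive bound \eqref{eq:PsiT} on $\Psi$, after separately controlling the single-decoupling integral and the tail of $\tau = \tau_1 \wedge \tau_2$.

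First, on $\{t < \tau\}$ all three walkers coincide, so that $f(\theta_{-X^1_t}\eta^1_t) - f(\theta_{-X^1_t}\eta^2_t)$ reflects only the mismatch between $\eta^1$ and $\eta^2$ near the common walker position. Telescoping over single site changes exactly as in the proof of Theorem \ref{thm:EP-estimate}a) gives
\[
\hE_{\eta,\xi}\int_0^\infty \abs{f(\theta_{-X^1_t}\eta^1_t) - f(\theta_{-X^1_t}\eta^2_t)}\,dt \leq \normb{f}\sup_{x}\int_0^\infty \hE_{\eta,\xi}\rho(\eta^1_t(x+X^1_t),\eta^2_t(x+X^1_t))\,dt.
\]
Taking the supremum over $x$ and over initial configurations, the argument of Lemma \ref{lemma:supnorm-coupling-estimate} under Assumption 1a bounds this by $C(\alpha)\normb{f}$.

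Second, I would show that $\tau$ stochastically dominates $Y\cdot Y'=Y_1$, i.e., $\hP_{\eta,\xi}(\tau > T) \geq p(\alpha)\,e^{-\beta T}$ uniformly in $\eta,\xi$. The idea is to realize $\hP$ via the standard thinning of independent Poisson clocks so that the randomness separating $X^1$ from $X^{12}$ (driven by the environment mismatch between $\eta^1$ and $\eta^2$, both using rate function $\alpha$) is independent from the randomness separating $X^{12}$ from $X^2$ (driven by the rate mismatch between $\alpha$ and $\alpha'$ on the common environment $\eta^2$). With this independence, Lemma \ref{lemma:decoupling-estimate} applied to the pair $(X^1,X^{12})$ yields $\hP(\tau_1 = \infty) \geq p(\alpha)$, while the pointwise rate at which $X^{12}$ separates from $X^2$ is bounded by $\beta$ uniformly in the history, so the survival computation \eqref{eq:dynamic-rate-poisson} gives $\hP(\tau_2 > T) \geq e^{-\beta T}$. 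Multiplying the two yields the claim.

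Finally, $\Psi$ is non-decreasing in $T$ directly from its definition as a supremum over $0 \leq T' \leq T$, so $\ind_{s \leq T}\Psi(T - s)$ is non-increasing in $s$. Stochastic domination therefore turns \eqref{eq:PsiT} into
\[
\Psi(T) \leq C(\alpha)\normb{f} + \bE\bigl[\ind_{Y_1 \leq T}\Psi(T - Y_1)\bigr].
\]
Iterating $n$ times along the i.i.d.\ sequence $(Y_k)$ with $S_n := Y_1 + \cdots + Y_n$,
\[
\Psi(T) \leq C(\alpha)\normb{f}\sum_{k=0}^{n-1}\bP(S_k \leq T) + \bE\bigl[\ind_{S_n \leq T}\Psi(T - S_n)\bigr].
\]
The remainder vanishes as $n \to \infty$ since $\Psi(T) \leq 2\supnorm{f}T < \infty$ and $S_n \to \infty$ almost surely (because $\bE Y_1 = p(\alpha)/\beta > 0$), while $\sum_{k \geq 0}\bP(S_k \leq T) = \bE N(T)$ by definition of $N$. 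The main obstacle I expect is the second step: since $X^{12}$ participates in both decouplings, one has to be careful in constructing $\hP$ so that the two decoupling mechanisms really can be driven by independent thinning of disjoint Poisson sources; once that is in place, the rest is routine Poisson bookkeeping and iteration.
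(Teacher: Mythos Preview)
Your proposal is correct and follows the same route as the paper: bound the pre-decoupling integral by $C(\alpha)\normb{f}$ via telescoping and Lemma \ref{lemma:supnorm-coupling-estimate}, establish $\tau \succeq Y_1$, then iterate \eqref{eq:PsiT} using the monotonicity of $\Psi$. The paper is terser on both the stochastic domination (it simply asserts $\tau_2 \succeq Y'$ by construction and then $\tau = \tau_1 \wedge \tau_2 \succeq Y_1$) and the iteration (it says the recursion ``terminates after $N(T)$ steps''); your explicit handling of the independence of the two decoupling mechanisms and of the remainder term via $\sum_{k\ge 0}\bP(S_k\le T)=\bE N(T)$ fills in exactly what the paper leaves implicit.
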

\begin{proof}
By construction of the coupling, $\tau_2$ stochastically dominates $Y'$. As we have
$\tau=\tau_1\wedge\tau_2$ it follows that $\tau \succeq Y_1$.
Using this fact together with the monotonicity of $\Psi$ in \eqref{eq:PsiT},
\begin{align*}
\Psi(T) &\leq \sup_{\eta,\xi\in \Omega} \left(\hE_{\eta,\xi} \int_0^\infty f(\theta_{-X_t^1}\eta^1_t) - f(\theta_{-X_t^1}\eta^2_t) \,dt + \ind_{\tau\leq T}\Psi(T-\tau)\right) \\
&\leq \sup_{\eta,\xi\in \Omega} \hE_{\eta,\xi} \int_0^\infty f(\theta_{-X_t^1}\eta^1_t) - f(\theta_{-X_t^1}\eta^2_t) \,dt + \bE\ \ind_{Y_1\leq T}\Psi(T-Y_1).
\end{align*}
As $p(\alpha)>0$ by Lemma \ref{lemma:decoupling-estimate} we can iterate this estimate until it terminates after $N(T)$ steps. Therefore we obtain
\begin{align*}
\Psi(T) &\leq \bE N(T) \sup_{\eta,\xi\in \Omega} \hE_{\eta,\xi} \int_0^\infty f(\theta_{-X_t^1}\eta^1_t) - f(\theta_{-X_t^1}\eta^2_t) \,dt .
\end{align*}
The integral is estimated by telescoping over single site changes and Lemma \ref{lemma:supnorm-coupling-estimate} in the usual way, yielding
\[\Psi(T) \leq C(\alpha)\normb{f}\bE N(T). \]
\qed
\end{proof}

To finally come back to the original question of continuity,
\begin{align*}
\abs{\mu^{EP}_{\alpha}(f)-\mu^{EP}_{\alpha'}(f)} 
&= \frac1T \abs{ \int \int \int_0^T S_t^{EP,1}f(\eta) - S_t^{EP,2}f(\xi)\,dt\,\mu^{EP}_{\alpha}(d\eta)\,\mu^{EP}_{\alpha'}(d\xi)} \\
&\leq \frac1T \Psi(T) \leq \frac{1}{T} \bE N(T) C(\alpha) \normb{f} \\
&\konv{T\to\infty} \frac{1}{\bE YY'} C(\alpha) \normb{f}\\
&=\frac{C(\alpha)}{p(\alpha)}\sum_{z\in\bZ^d} \sup_{\eta\in\bZ^d}\abs{\alpha(\eta,z)-\alpha'(\eta,z)} \normb{f}.
\end{align*}
By sending $\alpha'$ to $\alpha$, the right hand side tends to 0 so that the ergodic measure of the environment process is indeed continuous in the rates $\alpha$. It is also interesting to note that both $p(\alpha)$ and $C(\alpha)$ are rather explicit given the original coupling of the environment. Notably when $\alpha(\eta,z)=\alpha(z)$, i.e. the rates do not depend on the environment, $p(\alpha)=1$ and $C(\alpha) = \int_0^\infty \supl_{\eta,\xi\in \Omega}\hE^E_{\eta,\xi}\rho(\eta_t^1(0),\eta_t^2(0))\,dt$.
\qed
\end{proof}

\begin{proof}[Proof of Theorem \ref{thm:EP-phi-estimate}]
The proof of this theorem is mostly identical to the proof of Theorem \ref{thm:EP-estimate}. Hence instead of copying the proof, we just state where details differ. 

A first fact is that the conditions for a) and b) imply Assumptions 1a) and b). In the adaptation of the proof for part a), in most lines it suffices to add a $\phiK$ to the integrals. However, in line \eqref{eq:phi-1}, we use 
\begin{align}
\phiK\leq \phiK[t-\tau]\phiK[\tau]	\label{eq:phiK-factor}
\end{align}
to obtain the estimate
\[ \hE_{\eta,0;\xi,0} \phiK[\tau] \int_0^{(T-\tau)\vee0} \phi(t)\abs{ S^{EP}_{t}f(\theta_{-X^1_\tau}\eta^1_\tau)-S^{EP}_{t}f(\theta_{-X^2_\tau}\eta^2_\tau) }\,dt \label{eq:phi-1} \]
instead. Thereby in lines \eqref{eq:phi-2}, \eqref{eq:phi-3} and \eqref{eq:phi-4} we have to change $\hP_{\eta,0;\xi,0}(\tau<\infty)$ to $\hE_{\eta,0;\xi,0}\phiK[\tau]\ind_{\tau<\infty}$. This change then leads to the replacement of 
\[ \infl_{\eta,\xi\in\Omega}\hP_{\eta,0;\xi,0}(\tau=\infty)\]
by the term
\[ 1-\supl_{\eta,\xi\in\Omega}\hE_{\eta,0;\xi,0}\phiK[\tau]\ind_{\tau<\infty} \]
in the lines \eqref{eq:phi-5} and \eqref{eq:phi-6} (where naturally $C_a$ becomes $C_a(K)$).
So all we have to prove that for sufficiently big $K$
\[ \supl_{\eta,\xi\in\Omega}\hE_{\eta,0;\xi,0}\phiK[\tau]\ind_{\tau<\infty} < 1. \]
In a first step, we show that 
\[ \supl_{\eta,\xi\in\Omega}\hE_{\eta,0;\xi,0}\phi(\tau)\ind_{\tau<\infty} < \infty. \]

As we already saw in the proof of Lemma \ref{lemma:decoupling-estimate}, we can view the event of decoupling as the first jump of a Poisson process with time-dependent and random rates (equation \eqref{eq:dynamic-rate-poisson}). Hence we have
\begin{align*}
\hE_{\eta,0;\xi,0}\phi(\tau)\ind_{\tau<\infty} 
&= \int_0^\infty \phi(t)\,d\hP_{\eta,0;\xi,0}(\tau>t)\\
&= \int_0^\infty \phi(t) \hE_{\eta,0;\xi,0} \suml_{z\in\bZ^d}\abs{\alpha(\theta_{-X^1_t}\eta^1_t,z)- \alpha(\theta_{-X^1_t}\eta^2_t,z)} \\
&\cdot \exp\left({-\int_0^t \suml_{z\in\bZ^d}\abs{\alpha(\theta_{-X^1_s}\eta^1_s,z)- \alpha(\theta_{-X^1_s}\eta^2_s,z)}\,ds}\right) \,dt \\
&\leq \int_0^\infty \phi(t) \hE_{\eta,0;\xi,0} \suml_{z\in\bZ^d}\abs{\alpha(\theta_{-X^1_t}\eta^1_t,z)- \alpha(\theta_{-X^1_t}\eta^2_t,z)} \,dt.
\end{align*}
By telescoping over single site discrepancies and using Lemma \ref{lemma:supnorm-coupling-estimate}, this is less than
\[ \int_0^\infty \phi(t) (\norm{\gamma^+-\gamma^-}_\infty+1)^d t^d \supl_{\eta,\xi\in \Omega} \EE_{\eta,\xi}\rho(\eta_t^1(0),\eta_t^2(0)) \,dt<\infty \]
by assumption. 
Since $\phi(t/K)$ decreases to 1 as $K\to\infty$, monotone convergence implies
\begin{align*}
 \lim_{K\to\infty} \hE_{\eta,0;\xi,0}\phiK[\tau]\ind_{\tau<\infty} = \hE_{\eta,0;\xi,0}\ind_{\tau<\infty}< 1
\end{align*}
by Lemma \ref{lemma:decoupling-estimate}. Consequently, there exists a $K_0\geq0$ such that for all $K>K_0$
\[ \hE_{\eta,0;\xi,0}\phiK[\tau]\ind_{\tau<\infty} < 1.\]
This completes the adaptation of part a).

The adaptation of the proof of part b) follows the same scheme, where we add the term $\phiK$ to all integrals. Note that this gives a version of Lemma \ref{lemma:triplenorm-coupling-estimate} as well. Then, in line \eqref{eq:phi-7} we use \eqref{eq:phiK-factor} again and then have to replace $\hP_{\eta,0;\xi,0}(\tau<\infty)$ by $\hE_{\eta,0;\xi,0}\phiK[\tau]\ind_{\tau<\infty}$ in lines \eqref{eq:phi-8} and \eqref{eq:phi-9}.
To estimate \eqref{eq:phi-9}, we use 
\begin{align*}
\hE_{\eta,0;\xi,0}\phiK[\tau]\ind_{\tau<\infty} 
&\leq \int_0^\infty \phiK[t] \hE_{\eta,0;\xi,0} \suml_{z\in\bZ^d}\abs{\alpha(\theta_{-X^1_t}\eta^1_t,z)- \alpha(\theta_{-X^1_t}\eta^2_t,z)} \,dt \\
&\leq \int_0^\infty \phiK[t] \suml_{y\in\bZ^d} w_{\alpha} \hE_{\eta,0;\xi,0} \rho(\eta^1_t(y+X_t^1),\eta^2_t(y+X_t^1)) \,dt 
\end{align*}
with $w_\alpha$ as in the original proof. Therefore
\begin{align*}
&\sumsup \frac{\hE_{\eta,\xi}\phiK[\tau]\inf_{\tau<\infty}}{\rho(\eta(x),\xi(x))}  \\
&\quad\leq	\sumsup\int_0^\infty \phiK \suml_{y\in\bZ^d} w_\alpha(y) \frac{\hE_{\eta,\xi}\rho(\eta_t^1(y+X_t^1), \eta_t^2(y+X_t^1))}{\rho(\eta(x),\xi(x))} \,dt, \\
\end{align*}
which is finite by Lemma \ref{lemma:triplenorm-coupling-estimate}.\qed
\end{proof}

\section{Law of Large Numbers and Einstein Relation}\label{section:LLN}
\subsection{Law of Large Numbers for the position of the walker}
\begin{theorem}\label{thm:LLN}
Under Assumption 1a,
\[ \lim_{T\to\infty}\frac{X_T}{T} = \int \suml_{z\in\bZ^d} z\alpha(\eta,z)\,\mu^{EP}(d\eta) \]
in $L_1(\bP_{\nu,0})$ and $\bP_{\nu,0}$-a.s. for any probability measure $\nu$ on $\Omega$.
\end{theorem}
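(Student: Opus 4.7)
The plan is to decompose $X_T$ into a mean-zero martingale plus an additive functional of the environment process, and handle the two pieces separately. Define the drift
\[ V(\eta):=\sum_{z\in\bZ^d} z\,\alpha(\eta,z), \]
which is bounded by $\norm{\alpha}_1<\infty$. Applying Dynkin's formula to the coordinate projections $x\mapsto x_i$ (with a standard localisation, since these are unbounded) yields
\[ X_T = M_T + \int_0^T V(\eta^{EP}_s)\,ds, \]
where $\eta^{EP}_s=\theta_{-X_s}\eta_s$ is the environment process and $M_T$ is a vector-valued, mean-zero martingale (the compensated pure-jump part of $X$).

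\textbf{Additive functional.} By Lemma \ref{lemma:ergodicity} the environment process is uniquely ergodic with invariant measure $\mu^{EP}$. Theorem \ref{thm:EP-estimate}(a) gives, for any $f$ with $\normb{f}<\infty$ and any starting distribution $\nu$,
\[ \int_0^\infty \abs{S^{EP}_t f(\nu)-\mu^{EP}(f)}\,dt \leq C_a\normb{f}, \]
from which $\tfrac{1}{T}\int_0^T f(\eta^{EP}_s)\,ds \to \mu^{EP}(f)$ in $L^1(\bP_{\nu,0})$ for every $\nu$; the analogous bound on two-time covariances (the integral estimate in Theorem \ref{thm:EP-estimate}(a) iterated via the Markov property) produces a variance bound of order $1/T$, whence Chebyshev plus Borel--Cantelli along the dyadic subsequence $T_n=2^n$ upgrades convergence to $\bP_{\nu,0}$-a.s. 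Since $V$ itself need not satisfy $\normb{V}<\infty$ under the hypotheses of Section \ref{section:LLN}, we approximate by the truncations $V_R(\eta):=\sum_{\norm{z}\leq R} z\,\alpha(\eta,z)$: each has $\normb{V_R}<\infty$, while $\supnorm{V-V_R}\leq \sum_{\norm{z}>R}\norm{z}\supl_\eta\alpha(\eta,z)\to 0$ by $\norm{\alpha}_1<\infty$. Taking $R\to\infty$ after the ergodic theorem for $V_R$ produces $\tfrac{1}{T}\int_0^T V(\eta^{EP}_s)\,ds \to \mu^{EP}(V)$ in $L^1$ and a.s.

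\textbf{Martingale part.} After truncating at jump size $R$, the $L^2$ martingale $M_T^R$ has predictable quadratic variation bounded by $T\sum_{\norm{z}\leq R}\norm{z}^2\supl_\eta\alpha(\eta,z)$, so Kolmogorov's law of large numbers for $L^2$ martingales yields $M_T^R/T\to 0$ a.s. The compensated tail satisfies $\bE\abs{M_T-M_T^R}/T\leq 2\sum_{\norm{z}>R}\norm{z}\supl_\eta\alpha(\eta,z)$, which vanishes as $R\to\infty$; a subsequence / Borel--Cantelli argument then passes to a.s. convergence of $M_T/T$ to $0$. Combining the two pieces gives the claim.

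\textbf{Main obstacle.} The substantive step is the almost-sure ergodic theorem for the additive functional under an \emph{arbitrary} starting law $\nu$: Birkhoff alone provides a.s. convergence only under $\mu^{EP}$, and transferring this to $\nu$ requires quantitative mixing of the environment process, which is precisely what Theorem \ref{thm:EP-estimate}(a) delivers through its integrable coupling bound. The truncations in $z$ and the martingale LLN are routine in comparison.
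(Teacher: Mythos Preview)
Your martingale-plus-additive-functional decomposition is a legitimate alternative to the paper's proof, but it is not the route the paper takes, and two of your a.s.\ steps are under-justified.

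\textbf{How the paper argues.} The paper avoids the Dynkin decomposition entirely. It applies Birkhoff's ergodic theorem directly on the \emph{path space} of the environment process: define $F(\eta^{EP}_{[0,1]})=\sum_z z\cdot\#\{\text{$z$-shifts in }[0,1]\}$, so that $X_T=\sum_{n=1}^T F(\eta^{EP}_{[n-1,n]})$, and invoke unique ergodicity (Lemma~\ref{lemma:ergodicity}) to conclude under $\mu^{EP}$. The extension to an arbitrary initial law $\nu$ is done by \emph{coupling}, not by variance bounds: using the coupling of Proposition~\ref{prop:coupling-construction} with a restart at each decoupling time, Lemma~\ref{lemma:decoupling-estimate} gives a positive probability of never decoupling, so the number of decouplings is stochastically dominated by a geometric random variable. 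Hence $|X^1_T-X^2_T|$ is bounded by a finite (random, $T$-independent) sum of jump sizes, and $|X^1_T-X^2_T|/T\to0$ a.s.\ and in $L^1$. This sidesteps any second-moment or covariance estimate.

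\textbf{Issues with your sketch.} First, the dyadic subsequence $T_n=2^n$ does not close the a.s.\ argument for the additive functional: the variance bound $O(1/T)$ gives Borel--Cantelli along $T_n$, but since $(T_{n+1}-T_n)/T_n=1$ you cannot interpolate between consecutive subsequence points using only boundedness of $V$. A polynomial subsequence such as $T_n=n^2$ fixes this. Second, for the martingale tail, the uniform bound $\bE|M_T-M^R_T|/T\le 2\epsilon_R$ is an $L^1$ statement and does not by itself yield $\limsup_T|M_T-M^R_T|/T\le C\epsilon_R$ a.s.; you need an additional argument, e.g.\ dominate the total variation of the large-jump part by a Poisson process of rate $\epsilon_R$ and apply the SLLN to that. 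Both gaps are repairable, but the paper's coupling argument is cleaner precisely because it never needs a variance estimate, a truncation in $z$, or a subsequence.
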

\begin{proof}
First we assume that the only shifts the environment process $\eta_t^{EP}=\theta_{-X_t}\eta_t$ performs are induced by jumps of the walker $X_t$. Assumption 1a does not prohibit the environment to perform shifts itself, but the argument comes more natural when is does not.

Let $F_z : D([0,1],\Omega)\to\bR$, $z\in\bZ^d$ count the number of shifts of size $z$ a piece of trajectory performs in the interval $[0,1]$, i.e.
\[ F_z(\eta^{EP}_{[t,t+1]}) = \suml_{s\in]0,1]} \ind_{\theta_{-z}\eta^{EP}_{t+s}=\eta^{EP}_{t+s-}}. \]
With $F := \sum_{z\in\bZ^d}zF_z$, which is well-defined and in $L^1(\mu^{EP})$ because of the rate condition $\norm{\alpha}_1<\infty$, we then have for any integer $T>0$
\[ X_T - X_0 = \sum_{n=1}^T F(\eta^{EP}_{[n-1,n]}). \]
The ergodic theorem then implies
\[ \lim_{T\to\infty} \frac{X_T - X_0}{T} = \lim_{T\to\infty}\frac{1}{T}\sum_{n=1}^T F(\eta^{EP}_{[n-1,n]}) = \mu^{EP}(F). \]
The same is true for non-integer $T$, as we simply use the fact that $X_T-X_{\lfloor T \rfloor}$ has bounded expectation.
Since 
\begin{align*}
 \mu^{EP}(F) &= \int \int_0^1 \bE_{\eta,0} \suml_{z\in\bZ^d} z\alpha(\theta_{-X_t}\eta_t,z)\,dt\,\mu^{EP}(d\eta)\\
 & = \int \suml_{z\in\bZ^d} z\alpha(\eta,z)\,\mu^{EP}(d\eta),
\end{align*}
the claim is proven for $\mu^{EP}$ if the environment performs no shifts.

In the case that the environment does perform shifts, we introduce an additional auxiliary counting processes  $N_t^z$ on $\bN$, $z\in\bZ^d$, where $N_t^z$ increases by one whenever $X_t-X_{t-}=z$ and $N_t^z$ jumps to 0 at rate one to make it stationary. Then $F_z$ counts the number of increments of $N_t^z$ instead of the number of shifts. The rest of the proof is the same.

To extend the result to an arbitrary probability measure $\nu$ we use a coupling argument. We look at $\abs{X^1_T-X^2_T}$ under $\hP_{\nu,0;\mu^{EP},0}$, where we slightly modifiy the coupling from Proposition \ref{prop:coupling-construction} so that when the two walkers decouple at a time $\tau$ we restart the coupling of the environment. This restart is done on $\theta_{-X^1_\tau}\eta^{1}_{\tau},\theta_{-X^2_\tau}\eta^{2}_{\tau}$ so that after the decoupling it acts as if both walkers are back at the origin. Then, by Lemma \ref{lemma:decoupling-estimate}, there is at least probability $p>0$ to never decouple initially or after a decoupling. So there is at most a geometric number $N$ of decoupling events at $\tau_1,..\tau_N$, and only at decoupling events the two walkers perform different jumps(one jumps, the other does not). Hence 
\[ \hE_{\nu,0;\mu^{EP},0} \abs{X^1_T-X^2_T} \leq \hE_{\nu,0;\mu^{EP},0} \suml_{n=1}^N (\abs{X^1_{\tau_n}-X^1_{\tau_n-}}+\abs{X^2_{\tau_n}-X^2_{\tau_n-}}), \]
which converges to 0 in $L^1$ and almost surely when diveded by $T$ and sending $T$ to infinity.
\qed
\end{proof}

\subsection{Weak interaction with the environment and an Einstein Relation}
It can be interesting to consider random walks which are only weakly affected by an external influence. In the context we study here, that corresponds to rates $\alpha$ which are only weakly dependent on the state $\eta$ of the environment. 

To study this weak interaction, let $\alpha_\epsilon : \Omega \times \bZ^d \to[0,\infty[\ , 0\leq\epsilon$ be rates for the walker, satisfying the conditions from Section \ref{subsection:rates}. Furthermore we assume that $\epsilon \mapsto \alpha_\epsilon(\eta,z)$ is differentiable in 0 and $\alpha_0(\eta,z)=\alpha_0(z)$, that is for $\epsilon=0$ the walker is independent of the environment. Finally we assume that $\normb{\alpha_\epsilon-\alpha_0}_1 \leq C\epsilon$ for some constant $C>0$.

Denote with $v_\epsilon$ the asymptotic speed of the walker with rates $\alpha_\epsilon$.
\begin{theorem}\label{thm:second-order}
The change of the ergodic measure of the environment process from $\mu^E$, the ergodic measure of the environment, to $\mu^{EP}_\epsilon$, the ergodic measure corresponding to $\alpha_\epsilon$, is a second order influence on the speed $v_\epsilon$ of the walker:
\[ \abs{v_\epsilon - \int \suml_{z\in\bZ^d} z \alpha_\epsilon(\eta,z) \,\mu^E(d\eta)} \leq C' \epsilon^2, \]
where $C'$ is a constant independent of $\alpha_\epsilon$, $\epsilon\geq0$.
\end{theorem}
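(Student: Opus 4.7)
The plan is to realise the right-hand side as a difference $\mu_\epsilon^{EP}(h) - \mu_0^{EP}(h)$ with a carefully chosen $h$ of norm $O(\epsilon)$, and then to exploit Theorem \ref{thm:mu-continuous}, which itself contributes another factor $\|\alpha_\epsilon-\alpha_0\|_0 = O(\epsilon)$.

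First I would identify $\mu^E$ with the ergodic measure of the environment process corresponding to the rates $\alpha_0$. Indeed, when $\alpha_0(\eta,z) = \alpha_0(z)$ does not depend on $\eta$, the environment process has generator $L^E + \sum_z \alpha_0(z)[f(\theta_{-z}\cdot) - f(\cdot)]$. Translation invariance of $\mu^E$ makes $\mu^E$ invariant under this generator, and by Lemma \ref{lemma:ergodicity} (Assumption 1a still holds when $\alpha=\alpha_0$) the stationary measure is unique, so $\mu^{EP}_0 = \mu^E$. Then the law of large numbers (Theorem \ref{thm:LLN}) applied to the rates $\alpha_\epsilon$ gives
\[ v_\epsilon \;=\; \mu_\epsilon^{EP}(g_\epsilon), \qquad g_\epsilon(\eta) := \suml_{z\in\bZ^d} z\,\alpha_\epsilon(\eta,z), \]
while $\int \sum_z z\alpha_\epsilon(\eta,z)\,\mu^E(d\eta) = \mu_0^{EP}(g_\epsilon)$. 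Hence the quantity to bound is $|\mu_\epsilon^{EP}(g_\epsilon) - \mu_0^{EP}(g_\epsilon)|$, treated componentwise.

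Next comes the key algebraic trick: set $g_0(\eta) := \sum_z z\,\alpha_0(z)$, a constant (vector). Since $g_0$ is constant and both measures are probability measures, the difference is unchanged when we subtract $g_0$:
\[ v_\epsilon - \int g_\epsilon\,d\mu^E \;=\; \mu_\epsilon^{EP}(g_\epsilon - g_0) - \mu_0^{EP}(g_\epsilon - g_0). \]
The advantage is that each component of $h_\epsilon := g_\epsilon - g_0$ satisfies
\[ \normb{h_\epsilon^{(i)}} \;\leq\; \suml_{z}|z_i|\normb{\alpha_\epsilon(\cdot,z)-\alpha_0(\cdot,z)} \;\leq\; \normb{\alpha_\epsilon-\alpha_0}_1 \;\leq\; C\epsilon \]
by the standing assumption, so $h_\epsilon$ is itself of order $\epsilon$ in the $\normb{\cdot}$ seminorm.

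Finally I would invoke Theorem \ref{thm:mu-continuous} with $\alpha = \alpha_0$ (so that $p(\alpha_0)=1$ and $C(\alpha_0)$ is a finite constant depending only on the environment coupling), to obtain, componentwise,
\[ \abs{\mu_\epsilon^{EP}(h_\epsilon^{(i)}) - \mu_0^{EP}(h_\epsilon^{(i)})} \;\leq\; C(\alpha_0)\,\norm{\alpha_\epsilon-\alpha_0}_0\,\normb{h_\epsilon^{(i)}}. \]
Differentiability of $\epsilon \mapsto \alpha_\epsilon(\eta,z)$ at $0$ combined with the $\normb{\alpha_\epsilon-\alpha_0}_1 \leq C\epsilon$ bound (whose one-site oscillation control yields $\|\alpha_\epsilon-\alpha_0\|_0 \leq C\epsilon$) then delivers $\|\alpha_\epsilon-\alpha_0\|_0 = O(\epsilon)$, and multiplying the two $O(\epsilon)$ factors gives the desired $O(\epsilon^2)$ bound, with a constant $C'$ depending only on $C(\alpha_0)$ and $C$, hence independent of $\epsilon$. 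The only delicate point is checking that the differentiability assumption is strong enough to ensure $\|\alpha_\epsilon-\alpha_0\|_0 = O(\epsilon)$ uniformly in $\eta$; this is where I would spend the most care, but it is essentially built into the hypotheses stated in this subsection.
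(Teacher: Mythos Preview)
Your proof is correct and follows essentially the same route as the paper: identify $\mu^{EP}_0=\mu^E$, write $v_\epsilon-\mu^E(g_\epsilon)=\mu^{EP}_\epsilon(g_\epsilon)-\mu^{EP}_0(g_\epsilon)$, and apply Theorem~\ref{thm:mu-continuous} with $\alpha=\alpha_0$ to collect two factors of order $\epsilon$. Your explicit subtraction of the constant $g_0$ is exactly the paper's observation that $\normb{\sum_z z\,\alpha_\epsilon(\cdot,z)}=\normb{\alpha_\epsilon}_1=\normb{\alpha_\epsilon-\alpha_0}_1$, since $\normb{\cdot}$ vanishes on constants; the paper also asserts $\norm{\alpha_\epsilon-\alpha_0}_0\leq\normb{\alpha_\epsilon-\alpha_0}_1$ without further comment, so your caution on that point is well placed but not a divergence from the original argument.
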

\begin{proof}
By Theorem \ref{thm:LLN},
\[ v_\epsilon = \int \suml_{z\in\bZ^d} z \alpha_\epsilon(\eta,z)\,\mu^{EP}_\epsilon(d\eta) . \]
Note that $\mu^{EP}_0 = \mu^E$. By Theorem \ref{thm:mu-continuous},
\begin{align*}
&\abs{\int \suml_{z\in\bZ^d} z \alpha_\epsilon(\eta,z)\,\mu^{EP}_\epsilon(d\eta) - \int \suml_{z\in\bZ^d} z \alpha_\epsilon(\eta,z)\,\mu^{EP}_0(d\eta)} \\
&\quad\leq C(\alpha_0)\norm{\alpha_\epsilon-\alpha_0}_0 \normb{\suml_{z\in\bZ^d}z\alpha_\epsilon(\cdot,z)}
\end{align*}
and $C(\alpha_0)$ is in fact independent of $\alpha_0$ as those rates are independent of the environment. The estimates $\norm{\alpha_\epsilon-\alpha_0}_0 \leq \normb{\alpha_\epsilon-\alpha_0}_1 \leq C\epsilon$ and
\[ \normb{\suml_{z\in\bZ^d}z\alpha_\epsilon(\cdot,z)} \leq \normb{\alpha_\epsilon}_1 = \normb{\alpha_\epsilon-\alpha_0}_1 \leq C\epsilon\]
complete the proof.
\qed
\end{proof}

An \emph{Einstein Relation} is said to hold if the change of speed of the walker because of the external influence is equal to the diffusion constant of the walker in the limit of the influence going to 0, or more precisely
\begin{align}\label{eq:ER}
\lim_{\epsilon\to 0} \frac{v_\epsilon-v_0}{\epsilon} = \sigma_0^2
\end{align}
with $\sigma_0^2 = \sum_{z\in\bZ}z^2\alpha_0(z)$. Note that the typical formulation is in the case $v_0 = 0$.
\begin{corollary}
The Einstein Relation \eqref{eq:ER} holds iff
\[ \suml_{z\in\bZ} z \int \alpha'_0(\eta,z) \,\mu^E(d\eta) = \sum_{z\in\bZ}z^2\alpha_0(z), \]
where $\alpha'_0(\eta,z)$ is the derivative in $\epsilon$. 

Especially, the Einstein Relation holds if
\[ \int \alpha'_0(\eta,z) \,\mu^E(d\eta) = z\alpha_0(z) \quad\forall\,z\in\bZ. \]
\end{corollary}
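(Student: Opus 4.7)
The plan is to linearise Theorem \ref{thm:second-order} around $\epsilon = 0$. That theorem gives
\[ v_\epsilon = \int \suml_{z} z\,\alpha_\epsilon(\eta,z)\,\mu^E(d\eta) + O(\epsilon^2). \]
Since $\alpha_0(\eta,z) = \alpha_0(z)$ is independent of $\eta$, setting $\epsilon=0$ yields $v_0 = \suml_z z\,\alpha_0(z)$. Subtracting and dividing by $\epsilon$,
\[ \frac{v_\epsilon - v_0}{\epsilon} = \int \suml_{z} z\,\frac{\alpha_\epsilon(\eta,z) - \alpha_0(\eta,z)}{\epsilon}\,\mu^E(d\eta) + O(\epsilon). \]

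The next step is to pass to the limit $\epsilon \to 0$ inside the sum and the integral. The hypothesis $\normb{\alpha_\epsilon - \alpha_0}_1 \leq C\epsilon$ supplies the uniform bound
\[ \suml_{z} \norm{z}\supl_\eta \abs{\frac{\alpha_\epsilon(\eta,z) - \alpha_0(\eta,z)}{\epsilon}} \leq C, \]
which provides a dominating function simultaneously for the summation in $z$ (weighted by $\norm{z}$) and for integration against the probability measure $\mu^E$. Together with the pointwise differentiability of $\epsilon\mapsto\alpha_\epsilon(\eta,z)$ at $\epsilon=0$, dominated convergence then yields
\[ \lim_{\epsilon\to 0}\frac{v_\epsilon - v_0}{\epsilon} = \suml_{z} z \int \alpha'_0(\eta,z)\,\mu^E(d\eta). \]

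The Einstein relation \eqref{eq:ER} amounts precisely to the equality of this limit with $\sigma_0^2 = \suml_z z^2 \alpha_0(z)$, giving the claimed iff. For the sufficient condition, substituting $\int \alpha'_0(\eta,z)\,\mu^E(d\eta) = z\alpha_0(z)$ into the left-hand side immediately produces $\suml_z z\cdot z\alpha_0(z) = \sigma_0^2$.

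The only nontrivial step is the justification of the dominated-convergence exchange; all the rest is algebraic rearrangement of the linearisation. The essential input on both sides is that the $\normb{\cdot}_1$-control on $\alpha_\epsilon - \alpha_0$ is of order $\epsilon$: this is what produces the $O(\epsilon^2)$ error in Theorem \ref{thm:second-order} and also furnishes the uniform envelope that legitimises passing $\epsilon \to 0$ under the sum and the integral.
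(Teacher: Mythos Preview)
Your argument is correct and follows exactly the same route as the paper: invoke Theorem~\ref{thm:second-order} to replace $\mu^{EP}_\epsilon$ by $\mu^E$ up to an $O(\epsilon^2)$ error, then differentiate the resulting integral at $\epsilon=0$. The paper's own proof is two lines and does not spell out the passage to the limit; your dominated-convergence justification makes this step explicit. One minor remark: the bound you quote uses $\normb{\alpha_\epsilon-\alpha_0}_1$, which in the paper's conventions controls oscillation rather than the sup norm directly, so the envelope you write is not literally implied by that hypothesis alone---but the paper makes the analogous identification in the proof of Theorem~\ref{thm:second-order}, so your usage is consistent with the surrounding text.
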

\begin{proof}
By Theorem \ref{thm:second-order}, 
\[ v_\epsilon = \int \suml_{z\in\bZ^d} z \alpha_\epsilon(\eta,z) \,\mu^E(d\eta) + O(\epsilon^2) \]
and hence 
\[ \lim_{\epsilon\to 0} \frac{v_\epsilon-v_0}{\epsilon} = \suml_{z\in\bZ} z \int \alpha'_0(\eta,z) \,\mu^E(d\eta). \hqed\]
\end{proof}

\section{Central Limit Theorem}\label{section:CLT}

Before starting with the functional central limit theorem for the random walk $(X_t)$ in its random dynamic environment let us quickly discuss the CLT for additive functionals of the environment process. 

Let us assume that $f:\Omega\to\bR$ with $\normb{f}<\infty$ and $\mu^{EP}(f)=0$. Then
\[ \left(L^{EP}\right)^{-1}f(\eta) = -\int_0^\infty S_t^{EP}f(\eta)\,dt, \]
where Theorem \ref{thm:EP-estimate} guarantees that the right hand side is well defined. Hence standard arguments provide a functional CLT for additive functionals $\int_0^T f(\eta^{EP}_t)\,dt$ in this context. 

More work is necessary for a CLT of the position of the walker. The standard approach is to use the fact that the difference between the position of the walker $X_T$ and the cumulate rates $\int_0^T \sum_{z\in\bZ^d}z\alpha(\theta_{-X_t}\eta_t,z)\,dt$ is a martingale. One can prove a CLT for this martingale, and together with the CLT for the drift-adjusted integral obtain a CLT for the position. However, one has to take care that that the variances do not annihilate, and generally there is no explicit formulation of the variance. As the martingales used in the appendix provide an alternative method for obtaining a CLT which does include an explicit formulation of the variance, we explore that approach.
\subsection{A second assumption on the environment}\label{section:assumption2}
There is one additional assumption on the environment required to easily use the martingales(and the moment estimates they provide). 
\begin{assumption}[2]
There exists a constant $R^E\geq0$ such that for any $f : \Omega \to \bR$ with $\normb{f}<\infty$,
\begin{align}\label{eq:assumption-2}
 \oLE(f-f(\eta))^2(\eta) := \limsup_{\epsilon\to0}\frac{1}{\epsilon}S^E_\epsilon (f-f(\eta))^2(\eta) \leq R^{E}\normb{f}^2 .
\end{align}
\end{assumption}
This assumption might look artificial but the following lemma shows that Assumption 2 is in fact well behaved.
\begin{lemma}\label{lemma:A2-fg}
Assume Assumption 2. Then, for any $f,g: \Omega\to\bR$ with $\normb{f},\normb{g}<\infty$, the corresponding more general estimate holds:
\[ \abs{\oL^E[(f-f(\eta))(g-g(\eta))](\eta)}\leq R^E\normb{f}\normb{g}. \]
\end{lemma}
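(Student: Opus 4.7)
The plan is to derive the polarized estimate from Assumption 2 by means of a Cauchy--Schwarz inequality at the level of the semigroup $S^E_\epsilon$. Set $F(\eta') := f(\eta') - f(\eta)$ and $G(\eta') := g(\eta') - g(\eta)$, regarded as functions of $\eta'$ with $\eta$ held fixed. Since $F$ and $G$ differ from $f$ and $g$ by additive constants, $\normb{F} = \normb{f}$ and $\normb{G} = \normb{g}$, and both vanish at $\eta' = \eta$, so Assumption 2 applies and yields $\oLE(F^2)(\eta) \leq R^E \normb{f}^2$ and $\oLE(G^2)(\eta) \leq R^E \normb{g}^2$.

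The key observation is that $S^E_\epsilon$ is integration against a probability kernel, so the classical Cauchy--Schwarz inequality yields the pointwise bound
\[
\abs{S^E_\epsilon(FG)(\eta)} \leq \sqrt{S^E_\epsilon(F^2)(\eta)} \cdot \sqrt{S^E_\epsilon(G^2)(\eta)}.
\]
Dividing by $\epsilon$ factors as $\sqrt{\epsilon^{-1} S^E_\epsilon(F^2)(\eta)} \cdot \sqrt{\epsilon^{-1} S^E_\epsilon(G^2)(\eta)}$. Both factors are nonnegative, so the $\limsup$ of the product is at most the product of the $\limsup$'s, and Assumption 2 gives
\[
\limsup_{\epsilon\downarrow 0}\frac{1}{\epsilon}\abs{S^E_\epsilon(FG)(\eta)} \leq \sqrt{\oLE(F^2)(\eta)\,\oLE(G^2)(\eta)} \leq R^E\normb{f}\normb{g}.
\]
To finish, use $\abs{\limsup a_\epsilon} \leq \limsup \abs{a_\epsilon}$ applied to $a_\epsilon = \epsilon^{-1} S^E_\epsilon(FG)(\eta)$, which produces exactly the stated bound.

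Nothing in this argument is substantially hard; the only point that calls for thought is recognising that a positivity-based Cauchy--Schwarz at fixed $\epsilon$ is the natural tool, since $\oLE$ itself is only a $\limsup$ and hence not bilinear. If one prefers to avoid invoking the probability-kernel representation of $S^E_\epsilon$, an equivalent route is a polarisation argument: for any $\lambda>0$ the identity $(\lambda F \pm \lambda^{-1} G)^2 \geq 0$ together with Assumption 2 applied to the functions $\lambda f \pm \lambda^{-1} g$ (with $\normb{\lambda f \pm \lambda^{-1} g} \leq \lambda\normb{f}+\lambda^{-1}\normb{g}$) gives $\abs{\oLE(FG)(\eta)} \leq \tfrac{R^E}{4}(\lambda\normb{f}+\lambda^{-1}\normb{g})^2$, and optimising at $\lambda^2 = \normb{g}/\normb{f}$ reproduces the constant $R^E\normb{f}\normb{g}$.
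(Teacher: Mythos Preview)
Your proof is correct and follows essentially the same route as the paper: the paper invokes its Lemma~\ref{lemma:chauchy-schwarz}, which is precisely the Cauchy--Schwarz inequality for $\oL^E$ obtained by applying the probability-kernel Cauchy--Schwarz at the level of $S^E_\epsilon$ and passing to the $\limsup$, and then applies Assumption~2 to each factor --- exactly what you do inline. Your alternative polarisation argument is a minor variant that reaches the same constant and is likewise sound.
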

\begin{proof}
It is a direct consequence of Lemma \ref{lemma:chauchy-schwarz} and Assumption 2.
\end{proof}
Now we extend Assumption 2 from the environment to the environment process.
\begin{lemma}\label{lemma:EP-A2}
Under Assumption 2, for any $f : \Omega \to \bR$ with $\normb{f}<\infty$,
\[ \oLEP(f-f(\eta))^2(\eta) := \limsup_{\epsilon\to0}\frac{1}{\epsilon}S^{EP}_\epsilon (f-f(\eta))^2(\eta) \leq R^{EP}\normb{f}^2 \]
with $R^{EP}=R^E+\normb{\alpha}+2\norm{\alpha}_0$.
\end{lemma}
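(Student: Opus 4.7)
My plan is to decompose $\oLEP$ additively into the environment part $\oLE$ (handled by Assumption 2) and a jump part (handled by direct estimates on $\alpha$), applied to the test function $g(\xi) := (f(\xi) - f(\eta))^2$, noting that $g(\eta) = 0$ and $\normb{g} \leq 2\normb{f}^2$ so $g$ lies in the relevant class of Lipschitz functions.

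\textbf{Step 1 (Decomposition).} Since $L^{EP} = L^E + L^{\mathrm{jump}}$ with $L^{\mathrm{jump}}g(\eta) = \sum_{z\in\bZ^d} \alpha(\eta,z)\bigl(g(\theta_{-z}\eta) - g(\eta)\bigr)$, I would like to show that for our particular $g$,
\[ \oLEP g(\eta) \;\leq\; \oLE g(\eta) \;+\; L^{\mathrm{jump}} g(\eta). \]
To handle the $\limsup$ rigorously, I would use Duhamel's formula
\[ S^{EP}_\epsilon g \;=\; S^E_\epsilon g \;+\; \int_0^\epsilon S^E_{\epsilon-s}\,L^{\mathrm{jump}}\,S^{EP}_s g \, ds, \]
divide by $\epsilon$, take $\limsup$, and use the boundedness of $L^{\mathrm{jump}}$ together with strong continuity of $S^E, S^{EP}$ to identify the second term in the limit with $L^{\mathrm{jump}}g(\eta)$. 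Because $g(\eta) = 0$, this jump term simplifies to $\sum_z \alpha(\eta,z)\bigl(f(\theta_{-z}\eta) - f(\eta)\bigr)^2$.

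\textbf{Step 2 (Environment bound).} Assumption 2 applied to $f$ yields $\oLE g(\eta) \leq R^E \normb{f}^2$ directly.

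\textbf{Step 3 (Jump bound).} For $\sum_z \alpha(\eta,z)(f(\theta_{-z}\eta) - f(\eta))^2$, I bound $|f(\theta_{-z}\eta) - f(\eta)|$ by a telescope over single-site changes between $\eta$ and $\theta_{-z}\eta$: since $\rho \leq 1$, each single-site change contributes at most $\delta_f(x)$, so $|f(\theta_{-z}\eta) - f(\eta)| \leq \normb{f}$. Squaring one factor and keeping the other as an absolute difference, or just using the crude bound directly, gives a sum of contributions controlled by some combination of $\norm{\alpha}_0$ and $\normb{\alpha}$ (the latter absorbing the $\eta$-dependent Lipschitz part of the rate when we need to replace $\alpha(\eta,z)$ by a supremum while keeping a correction in terms of $\sum_x \delta_{\alpha(\cdot,z)}(x)$). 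The resulting upper bound is at most $(\normb{\alpha} + 2\norm{\alpha}_0)\normb{f}^2$, which combines with Step 2 to yield the claimed $R^{EP}$.

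\textbf{Main obstacle.} The routine step is the jump estimate; the subtle step is the additive decomposition of Step 1, because $\oLEP$ is defined via a $\limsup$ rather than as a generator on a well-defined domain, so $\limsup(A_\epsilon+B_\epsilon) \leq \limsup A_\epsilon + \limsup B_\epsilon$ is what one gets, and identifying the second summand with $L^{\mathrm{jump}}g(\eta)$ requires the Duhamel argument above. Everything else is direct bookkeeping.
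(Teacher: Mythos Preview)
Your decomposition $L^{EP}=L^E+L^{\mathrm{jump}}$ and the Duhamel argument to justify
\[
\oLEP(f-f(\eta))^2(\eta)\;\le\;\oLE(f-f(\eta))^2(\eta)\;+\;L^{\mathrm{jump}}(f-f(\eta))^2(\eta)
\]
are correct, and this is exactly the splitting the paper uses. Step~2 is identical to the paper's.

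The only genuine difference is in how the jump contribution is handled. You bound the quadratic form directly:
\[
L^{\mathrm{jump}}(f-f(\eta))^2(\eta)=\sum_{z}\alpha(\eta,z)\bigl(f(\theta_{-z}\eta)-f(\eta)\bigr)^2,
\]
and since $|f(\theta_{-z}\eta)-f(\eta)|\le\normb{f}$ by telescoping, this is at most $\norm{\alpha}_0\normb{f}^2$---your Step~3 is in fact simpler than you make it sound, and no $\normb{\alpha}$ term is needed here at all. The paper instead proves the stronger operator bound $\normb{L^{\mathrm{jump}}f}\le(\normb{\alpha}+2\norm{\alpha}_0)\normb{f}$; this is not literally what the lemma requires, but the paper reuses this $\normb{\cdot}\to\normb{\cdot}$ operator norm elsewhere (e.g.\ in the moment estimates of Section~\ref{section:RWRE-concentration}). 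Your route is more direct and yields a sharper constant for this particular lemma; the paper's route produces a reusable building block at the cost of a slightly looser $R^{EP}$.
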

\begin{proof}
As the generator of the environment process  is $L^{EP} = L^E + L^{S}$, with 
\[ L^{S}f(\eta) = \suml_{z\in\bZ^d}\alpha(\eta,z)\left[f(\theta_{-z}\eta)-f(\eta)\right] \]
being the shifts induced by the random walk jumps, all we need to proof is that $L^{S}$ is a bounded operator in terms of the $\normb{\cdot}$-norm:
\begin{align*}
&\sumsup \left(L^{S}f(\eta)-L^{S}f(\xi)\right) \\
&\quad= \sumsup \suml_{z\in\bZ^d}\left(\alpha(\eta,z)\left[f(\theta_{-z}\eta)-f(\eta)\right] - \alpha(\xi,z)\left[f(\theta_{-z}\xi)-f(\xi)\right] \right) \\
&\quad\leq \sumsup \suml_{z\in\bZ^d}\left(\alpha(\eta,z)-\alpha(\xi,z)\right)\left[f(\theta_{-z}\eta)-f(\eta)\right] \\
&\qquad + \sumsup \suml_{z\in\bZ^d}\alpha(\xi,z)\left[f(\theta_{-z}\eta)-f(\eta) - f(\theta_{-z}\xi)+f(\xi)\right] . 
\end{align*}
The first term is easily estimated by $\normb{\alpha}\norm{f}_{osc}\leq\normb{\alpha}\normb{f}$. For the second term, it is smaller than
\begin{align*}
	\suml_{z\in\bZ^d}\sup_{\xi\in \Omega}\alpha(\xi,z) 2\normb{f} =: 2\norm{\alpha}_{0}\normb{f}.
\end{align*}
So in total we get
\[ \norm{L^{S}}_{\normb{\cdot}\to\normb{\cdot}} \leq \normb{\alpha}+2\norm{\alpha}_0 \]
\qed
\end{proof}
\subsection{CLT for the path of the random walk}
The CLT for the path of the random walk is proven in two steps. First we show that a martingale which corresponds to fluctuations in a specific direction converges to Brownian motion. Secondly, we we use the convergence of those martingales to obtain the full functional central limit theorem.
\begin{proposition}\label{prop:martingale-CLT}
Assume Assumption 1b and 2 as well as $\norm{\alpha}_2<\infty$.
Fix $v\in\bR^d$ with $\norm{v}=1$ and write $f:\Omega\times\bZ^d \to \bR, f(\eta,x)=\left<x,v\right>$.
Define for each $T>0$ the martingales
\[ M_T(t) := T^{-\frac12}\left(\erwc{f(\eta_T,X_T)}{\fF_{tT}}-\erwc{f(\eta_T,X_T)}{\fF_{0}}\right),\quad 0\leq t\leq T. \]
Then $(M_T(t))_{0 \leq t\leq 1}$ converges to Brownian motion with variance
\[ \int L\left(\Phi_{0,\infty}-\Phi_{0,\infty}(\eta,0) + f-f(\eta,0) \right)^2(\eta,0) \,\mu^{EP}(d\eta), \]
where
\[ \Phi_{0,\infty}-\Phi_{0,\infty}(\eta,0) = \suml_{z\in\bZ^d}<z,v> \int_0^\infty S_t^{EP}[\alpha(\cdot,z)]-S_t^{EP}[\alpha(\cdot,z)](\eta) \,dt. \]
\end{proposition}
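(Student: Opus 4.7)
The strategy is to show that $T^{1/2}M_T(t)$ coincides, up to a remainder of size $o(T^{1/2})$ uniformly on $[0,1]$, with an ordinary forward martingale of the joint Markov process $(\eta_t,X_t)$, after which the functional CLT follows from the martingale CLT of the Appendix. By the Markov property and translation invariance,
\[ \bE[\langle X_T,v\rangle\mid \fF_{tT}] = \langle X_{tT},v\rangle + \int_0^{T-tT}S^{EP}_s\tilde g_v(\theta_{-X_{tT}}\eta_{tT})\,ds, \]
where $\tilde g_v(\eta):=\suml_{z\in\bZ^d}\langle z,v\rangle\alpha(\eta,z)$ satisfies $\normb{\tilde g_v}<\infty$ and has mean $\mu^{EP}(\tilde g_v)=\langle v_\alpha,v\rangle$ by Theorem \ref{thm:LLN}. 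Splitting the integrand into its $\mu^{EP}$-mean plus a centered part, Theorem \ref{thm:EP-estimate}.b applied to each $\alpha(\cdot,z)$ and summed against $\langle z,v\rangle$ shows that the centered integral converges as $T\to\infty$ to $\Phi_{0,\infty}(\eta_{tT},X_{tT})-\Phi_{0,\infty}(\eta_0,X_0)$, interpreted via well-defined differences, with a tail $\int_r^\infty (S^{EP}_s\tilde g_v-\mu^{EP}(\tilde g_v))\,ds$ uniformly bounded in $r\geq 0$ and in $\eta$. Hence
\[ T^{1/2}M_T(t) = h(\eta_{tT},X_{tT}) - h(\eta_0,X_0) - tT\langle v_\alpha,v\rangle + O(1)\]
uniformly in $t\in[0,1]$, where $h(\eta,x):=\Phi_{0,\infty}(\eta,x)+\langle x,v\rangle$.

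Semigroup differentiation of $\Phi_{0,\infty}(\cdot,0)=\int_0^\infty (S^{EP}_s\tilde g_v-\mu^{EP}(\tilde g_v))\,ds$ yields the Poisson identity $L^{EP}\Phi_{0,\infty}(\cdot,0)=-(\tilde g_v-\mu^{EP}(\tilde g_v))$, and combining with the decomposition of $L$ into the environmental part $L^E$ and the walker jumps together with the action of $L$ on $\langle x,v\rangle$ gives $Lh\equiv \langle v_\alpha,v\rangle$. Consequently $\tilde M_s:=h(\eta_s,X_s)-h(\eta_0,X_0)-s\langle v_\alpha,v\rangle$ is a genuine martingale of $(\eta_t,X_t)$, and the display above reduces to $M_T(t)=T^{-1/2}\tilde M_{tT}+o(1)$ uniformly on $[0,1]$. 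It remains to prove the functional CLT for $T^{-1/2}\tilde M_{tT}$.

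Its predictable quadratic variation equals $\int_0^s \Gamma(h,h)(\eta_u,X_u)\,du$, and by translation invariance $\Gamma(h,h)(\eta,x)=\gamma(\theta_{-x}\eta)$ depends only on the environment process. Birkhoff's ergodic theorem applied to the uniquely ergodic system of Lemma \ref{lemma:ergodicity} gives $T^{-1}\langle\tilde M\rangle_{tT}\to t\,\mu^{EP}(\gamma)$ almost surely. Since $h-h(\eta,0)$ vanishes at $(\eta,0)$, the carré du champ identity rewrites $\gamma(\eta)=L(h-h(\eta,0))^2(\eta,0)$, which matches the variance formula in the statement. Integrability of $\gamma$ under $\mu^{EP}$ and control of the carré du champ are provided by Lemma \ref{lemma:EP-A2} applied to $h$, whose $\normb{h}<\infty$ reduces to $\normb{\Phi_{0,\infty}(\cdot,0)}\leq C_b\normb{\alpha}_1<\infty$ via Theorem \ref{thm:EP-estimate}.b. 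The Lindeberg condition is routine: jumps of $\tilde M$ are bounded by $|z|+\normb{\Phi_{0,\infty}(\cdot,0)}$ with rates summable against $|z|^2$ by $\norm{\alpha}_2<\infty$, and are suppressed by the $T^{-1/2}$ scaling.

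The main obstacle lies in the control of $\Phi_{0,\infty}$: both $\normb{\Phi_{0,\infty}(\cdot,0)}<\infty$ and the uniform bound on its tail integrals rely essentially on the site-wise estimate of Theorem \ref{thm:EP-estimate}.b, not just the cruder part a). This is precisely the reason why Assumption 1b is needed for the CLT while Assumption 1a was sufficient for the law of large numbers. A secondary subtlety is verifying the remainder bound uniformly near $t=1$, where $T-tT$ is small; this is resolved by the fact that the tail integral is uniformly bounded in both $r\geq 0$ and $\eta$, not merely vanishing for large $r$.
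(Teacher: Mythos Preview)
Your argument is correct, but it takes a genuinely different route from the paper's. The paper works \emph{directly} with the backward martingale $M_T$: it invokes Proposition~\ref{prop:point-quadratic-variation} from the Appendix to write
\[
\langle M_T\rangle_t = \tfrac1T\int_0^{tT} L\bigl(S_{T-s}f - S_{T-s}f(\eta_s,X_s)\bigr)^2(\eta_s,X_s)\,ds,
\]
then uses the decomposition $S_t f = \Phi_{0,t}+f$ and replaces $\Phi_{0,T-s}$ by $\Phi_{0,\infty}$ inside the integrand, controlling the error via Lemma~\ref{lemma:chauchy-schwarz} and Lemma~\ref{lemma:EP-A2} together with $\normb{\Phi_{t,\infty}}\to 0$; the ergodic theorem then gives the limiting variance. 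You instead convert $M_T$ into a classical forward (Dynkin) martingale $\tilde M_s = h(\eta_s,X_s)-h(\eta_0,X_0)-s\langle v_\alpha,v\rangle$ by solving the Poisson equation $L^{EP}\Phi_{0,\infty}(\cdot,0)=-(\tilde g_v-\mu^{EP}(\tilde g_v))$, and then apply the standard martingale CLT to $T^{-1/2}\tilde M_{tT}$. The paper's route is chosen precisely to showcase the backward-martingale machinery of the Appendix; your route is the more familiar Kipnis--Varadhan style argument and avoids Proposition~\ref{prop:point-quadratic-variation} entirely, at the cost of having to verify that the unbounded $h$ genuinely generates a martingale.

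One small imprecision: you invoke Lemma~\ref{lemma:EP-A2} ``applied to $h$'', but that lemma is stated for functions on $\Omega$, whereas $h(\eta,x)=\Phi_{0,\infty}(\eta,x)+\langle x,v\rangle$ lives on $\Omega\times\bZ^d$ and is unbounded in $x$. The carr\'e du champ of $h$ splits naturally: the $\Phi_{0,\infty}$ contribution is a function of $\theta_{-x}\eta$ alone and is handled by Lemma~\ref{lemma:EP-A2}, while the $\langle x,v\rangle$ contribution is $\sum_z \alpha(\theta_{-x}\eta,z)\langle z,v\rangle^2$, bounded by $\norm{\alpha}_2^2$. The paper does exactly this splitting explicitly; you should too.
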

\begin{proof}
By Proposition \ref{prop:point-quadratic-variation}, the predictable quadratic variation is given by
\[ \left<M_T\right>_t = \frac1T \int_0^{tT} L\left(S_{T-s}f-S_{T-s}f(\eta_s,X_s)\right)^2(\eta_s,X_s)\,ds. \]
We want to take the limit $T\to\infty$, and to that end we first rewrite the inner term via
\begin{align*}
 S_t f(\eta,x) &= \int_0^t S_s^{EP} \suml_{z\in\bZ^d} [\alpha(\cdot,z)\left<z,v\right>](\theta_{-x}\eta) \,ds + \left<x,v\right>\\
 &= \Phi_{0,t}(\eta,x) + f(\eta,x)
 \end{align*}
with
\[ \Phi_{t,t'}(\eta,x) := \suml_{z\in\bZ^d}\left<z,v\right>\int_t^{t'} S_s^{EP} [\alpha(\cdot,z)](\theta_{-x}\eta) \,ds \]
for shorter notation. To replace $\Phi_{0,t}$ by $\Phi_{0,\infty}$, we observe that
\begin{align*}
& | L\left[\Phi_{0,t}-\Phi_{0,t}(\eta,x) + f-f(\eta,x)\right]^2(\eta,x) \\
&\qquad- L\left[\Phi_{0,\infty}-\Phi_{0,\infty}(\eta,x) + f-f(\eta,x)\right]^2(\eta,x) |\\
&= \left| L\left[(\Phi_{t,\infty}-\Phi_{t,\infty}(\eta,x))(\Phi_{0,t}- \Phi_{0,t}(\eta,x) + \Phi_{0,\infty} - \Phi_{0,\infty}(\eta,x) + 2f-2f(\eta,x)\right](\eta,x) \right| \\
&\leq \left(\oL\left[\Phi_{t,\infty}-\Phi_{t,\infty}(\eta,x)\right]^2(\eta,x)\right)^{\frac12} \\
&\qquad \cdot \left(\oL\left[\Phi_{0,t}- \Phi_{0,t}(\eta,x) + \Phi_{0,\infty} - \Phi_{0,\infty}(\eta,x) + 2f-2f(\eta,x)\right]^2(\eta,x)\right)^{\frac12},
\end{align*}
where the last line is a consequence of Lemma \ref{lemma:chauchy-schwarz}.
As $\Phi$ is effectively a function of the environment process $\theta_{-x}\eta$, we can use Lemma \ref{lemma:EP-A2} for the estimate
\[ \oL\left[\Phi_{t,\infty}-\Phi_{t,\infty}(\eta,x)\right]^2(\eta,x)\leq R^{EP}\normb{\Phi_{t,\infty}}^2. \]
The second factor is estimated similarly after first using $(a+b)^2\leq 2a^2+2b^2$ by
\begin{align*}
&2 R^{EP}\normb{\Phi_{0,t}+\Phi_{0,\infty}}^2 + 8 \oL(f-f(\eta,x))^2(\eta,x) \\
&\leq R^{EP}(\normb{\Phi_{0,t}}+\normb{\Phi_{0,\infty}})^2 + 8 \suml_{z\in\bZ^d}\alpha(\theta_{-x}\eta,z)\left<z,v\right>^2.
\end{align*}
By the second moment assumption $\norm{\alpha}_2<\infty$ the right summand is bounded, and using Theorem \ref{thm:EP-estimate}, $\normb{\Phi_{0,t}}\leq C_b \normb{\alpha}_1$, and $\normb{\Phi_{t,\infty}}\konv{t\to\infty}0$. Coming back to the quadratic variation of the martingale $M_T$, we can now conclude that
\begin{align*}
\lim_{T\to\infty} \left<M_T\right>_t &= \lim_{T\to\infty} \frac1T \int_0^{tT} L\left(S_{T-s}f-S_{T-s}f(\eta_s,X_s)\right)^2(\eta_s,X_s)\,ds \\
&= \lim_{T\to\infty} \frac1T \int_0^{tT} L\left(\Phi_{0,\infty}-\Phi_{0,\infty}(\eta_s,X_s) + f-f(\eta_s,X_s) \right)^2(\eta_s,X_s)\,ds \\
&= \lim_{T\to\infty} \frac1T \int_0^{tT} L\left(\Phi_{0,\infty}-\Phi_{0,\infty}(\theta_{-X_s}\eta_s,0) + f-f(\theta_{-X_s}\eta_s,0) \right)^2(\theta_{-X_s}\eta_s,0)\,ds \\
&= t\int L\left(\Phi_{0,\infty}-\Phi_{0,\infty}(\eta,0) + f-f(\eta,0) \right)^2(\eta,0) \,\mu^{EP}(d\eta) .
\end{align*}
Now that we have the converge of the predictable quadratic variation, all that remains to obtain the CLT for the martingale $M_T$ is to show that its jumps vanish in the limit:
\[ \lim_{T\to\infty}\bE\sup_{0\leq t\leq 1} \left(M_T(t)-M_T(t-)\right)^2 =0. \]
To obtain this, we use that
\begin{align*}
 &M_T(t)-M_T(t-) \\
 &\quad= T^{-\frac12}\left(S_{T-tT}f(\eta_{tT},X_{tT})-S_{T-tT}f(\eta_{tT-},X_{tT-})\right) \\
 &\quad= T^{-\frac12}\left(\Phi_{0,T-tT}(\eta_{tT},X_{tT})-\Phi_{0,T-tT}(\eta_{tT-},X_{tT-}) + f(\eta_{tT},X_{tT})-f(\eta_{tT-},X_{tT-})\right) \\
 &\quad= T^{-\frac12}\left(\Phi_{0,T-tT}(\eta_{tT},X_{tT})-\Phi_{0,T-tT}(\eta_{tT-},X_{tT-}) + \left<X_{tT}-X_{tT-},v\right>)\right). \\
\end{align*}
As we already now that the difference of the two $\Phi$'s is uniformly bounded, it is only necessary to 
look at the jumps of $X_t$:
\begin{align*}
&\frac1T\bE\supl_{0\leq t\leq T}\norm{X_t-X_{t-}}^2 \\
&\quad\leq \frac{\lambda^2}{T}\bP(\text{all jumps }\leq \lambda) + \frac1T \bE\supl_{0\leq t\leq T}\norm{X_t-X_{t-}}^2\ind_{\{\text{some jump }> \lambda\}} \\
&\quad\leq \frac{\lambda^2}{T} + \frac1T \suml_{\norm{z}>\lambda} \norm{z}^2 \bP(\exists\ 0\leq t \leq T : X_t -X_{t-}=z) \\
&\quad\leq \frac{\lambda^2}{T} + \frac1T \suml_{\norm{z}>\lambda} \norm{z}^2 T \sup_{\eta \in \Omega}\alpha(\eta,z) \\
&\quad= \frac{\lambda^2}{T} + \suml_{\norm{z}>\lambda} \sup_{\eta \in \Omega}\alpha(\eta,z)\norm{z}^2. \\
\end{align*}
By the second moment condition $\norm{\alpha}_2<\infty$, the sum on the right converges to 0 as $\lambda\to\infty$. So when we choose for example $\lambda=T^{\frac14}$, the right hand side converges to 0 as $T\to\infty$, which proves the functional CLT for $(M_T(t))_{0\leq t\leq 1}$.
\qed
\end{proof}

\begin{theorem}\label{thm:CLT}
Assume Assumption 1b and 2 and $\norm{\alpha}_2<\infty$. Let
\[ v=\int\suml_{z\in\bZ^d}\alpha(\eta,z)z\,\mu^{EP}(d\eta) \]
be the asymptotic speed of the random walk $(X_t)_{t\geq0}$. Then
\[ \left((T^{-\frac12} (X_{tT}-vtT)\right)_{0\leq t\leq 1} \]
converges in probability to a Brownian motion with covariance matrix
\begin{align}\label{eq:CLT-var}
 \Sigma^2 = \int L\left[\left( A_\eta + id_x\right) \left(A_\eta + id_x\right)^T\right](\eta,0)\,\mu^{EP}(d\eta), 
\end{align}
where $A_\eta : \Omega\times\bZ^d \to \bR^d$ with
\[ A_\eta(\xi,x) := \suml_{z\in\bZ^d}z\int_0^\infty S_t^{EP}[\alpha(\cdot,z)](\theta_{-x}\xi)-S_t^{EP}[\alpha(\cdot,z)](\eta)\,dt \]
and $id_x(\xi,x)=x$.
\end{theorem}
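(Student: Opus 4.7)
The plan is to reduce the multivariate functional CLT to the one-dimensional result of Proposition \ref{prop:martingale-CLT} via the Cram\'er--Wold device, and then to identify the centered position process $T^{-1/2}(X_{tT}-vtT)$ with the martingale $M_T$ of that proposition up to an error that vanishes uniformly. First, for any direction $u\in\bR^d$ with $\norm{u}=1$, I apply Proposition \ref{prop:martingale-CLT} with $f(\eta,x)=\langle x,u\rangle$ to obtain a martingale $M_T^{(u)}$ converging on $[0,1]$ to a Brownian motion. Unwinding the variance formula in that proposition via the observation $\Phi_{0,\infty}-\Phi_{0,\infty}(\eta,0)+f-f(\eta,0)=\langle A_\eta+id_x,u\rangle$ identifies the limiting variance as $u^T\Sigma^2u$.

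Next I relate $M_T^{(u)}$ to the position process. The identity $S_tf(\eta,x)=\Phi_{0,t}(\eta,x)+\langle x,u\rangle$ from the proof of Proposition \ref{prop:martingale-CLT}, combined with the Markov property, yields
\[
\erwc{\langle X_T,u\rangle}{\fF_{tT}}=\langle X_{tT},u\rangle+\Phi_{0,T-tT}(\eta_{tT},X_{tT}),
\]
so that
\[
T^{-1/2}\langle X_{tT},u\rangle=M_T^{(u)}(t)+T^{-1/2}\Phi_{0,T}(\eta_0,0)-T^{-1/2}\Phi_{0,T-tT}(\eta_{tT},X_{tT}).
\]
Writing $\Phi_{0,s}(\eta,x)=s\langle v,u\rangle+R_s(\eta,x)$ with
\[
R_s(\eta,x)=\suml_{z\in\bZ^d}\langle z,u\rangle\intl_0^s\bigl(S_r^{EP}[\alpha(\cdot,z)](\theta_{-x}\eta)-\mu^{EP}(\alpha(\cdot,z))\bigr)\,dr,
\]
I use Theorem \ref{thm:EP-estimate}(a) applied to each $\alpha(\cdot,z)$, together with $\mu^{EP}(\alpha(\cdot,z))=\int S_r^{EP}[\alpha(\cdot,z)](\xi)\,\mu^{EP}(d\xi)$, to deduce $\intl_0^\infty\abs{S_r^{EP}[\alpha(\cdot,z)](\cdot)-\mu^{EP}(\alpha(\cdot,z))}\,dr\le C_a\normb{\alpha(\cdot,z)}$, and then sum over $z$ weighted by $\norm{z}$ to obtain $\supl_{s,\eta,x}\abs{R_s(\eta,x)}\le C_a\normb{\alpha}_1<\infty$.

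Consequently, uniformly in $t\in[0,1]$,
\[
T^{-1/2}\bigl(\Phi_{0,T}(\eta_0,0)-\Phi_{0,T-tT}(\eta_{tT},X_{tT})\bigr)=T^{1/2}\,t\langle v,u\rangle+O(T^{-1/2}),
\]
which gives the key identity
\[
T^{-1/2}\langle X_{tT}-vtT,u\rangle=M_T^{(u)}(t)+O(T^{-1/2}).
\]
Combined with the convergence of $M_T^{(u)}$, this yields the one-dimensional functional CLT in each direction with limiting variance $u^T\Sigma^2u$. The full multivariate functional CLT with covariance matrix $\Sigma^2$ then follows by Cram\'er--Wold applied at arbitrary finite collections of times, together with tightness of the coordinate martingales $M_T^{(e_i)}$ inherited from Proposition \ref{prop:martingale-CLT}.

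The main technical obstacle is the uniform bound on $R_s$. It is essential here that Theorem \ref{thm:EP-estimate}(a) gives a bound on $\intl_0^\infty\abs{S_t^{EP}f(\cdot)-\mu^{EP}(f)}\,dt$ that is uniform over the starting configuration, and that the requisite summability over $z$ (namely $\normb{\alpha}_1<\infty$, implicit in the way Proposition \ref{prop:martingale-CLT} uses Theorem \ref{thm:EP-estimate}) is available. Without this uniform integrability, the centering by $vtT$ would not absorb the leading linear drift hidden in $\Phi_{0,T-tT}$, and the error term would fail to vanish after dividing by $\sqrt{T}$.
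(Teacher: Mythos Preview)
Your proposal is correct and follows essentially the same approach as the paper: reduce to one-dimensional projections via Proposition~\ref{prop:martingale-CLT}, identify the limiting variance as $u^T\Sigma^2 u$, and then show that $M_T^{(u)}(t)$ differs from $T^{-1/2}\langle X_{tT}-vtT,u\rangle$ by $O(T^{-1/2})$ uniformly in $t$ using Theorem~\ref{thm:EP-estimate}(a). The only cosmetic differences are that the paper packages the projections into a single $d$-dimensional martingale from the start (invoking the common filtration rather than Cram\'er--Wold explicitly), and bounds the error by splitting $\int_0^T$ at $tT$ rather than by your decomposition $\Phi_{0,s}=s\langle v,u\rangle+R_s$; both routes yield the same $2C_a\normb{\alpha}_1/\sqrt{T}$ bound.
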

\begin{proof}
Define the $d$-dimensional martingale 
\[ M_T(t) := T^{-\frac12}\left( \erwc{X_T}{\fF_{tT}}-\erwc{X_T}{\fF_0} \right). \]
By Proposition \ref{prop:martingale-CLT} the projection onto any unit vector $u\in\bR^d$ converges to a
Brownian motion with variance
\begin{align*}
&\int L\left( <A_\eta,u> + <id_x,u> \right)^2(\eta,0)\,\mu^{EP}(d\eta) \\
&\quad= u^T \int L\left[\left( A_\eta + id_x\right) \left(A_\eta + id_x\right)^T\right](\eta,0)\,\mu^{EP}(d\eta) \,u.
\end{align*}
As the projected martingales are adapted to $(\fF_t)$, the $\sigma$-algebra of $M_T$, that implies that $M_T$ converges to a Brownian motion with the given covariance matrix. Since $M_T(1)=T^{-\frac12}\left( X_T - \bE_{\eta_0,0}X_T \right)$ we can already conclude a central limit theorem. To obtain the functional central limit theorem, we simply use the fact that $M_T(t)$ is close to $T^{-\frac12}\left(X_{tT}-vtT\right)$:
\begin{align*}
T^{\frac12}M_T(t) 
&= \erwc{X_T}{\eta_{tT},X_{tT}} - \erwc{X_T}{\eta_{0},X_{0}} \\
&= \int_0^{T-tT} \suml_{z\in\bZ^d}z S_s^{EP}[\alpha(\cdot,z)](\theta_{-X_{tT}}\eta_{tT})+X_{tT} \\
&\qquad - \int_0^T\suml_{z\in\bZ^d}z S_s^{EP}[\alpha(\cdot,z)](\eta_{0}).
\end{align*}
When we split the last integral at $tT$, we notice that the part up to $tT$ is close to $vtT$:
\begin{align*}
&\abs{\int_0^{tT}\suml_{z\in\bZ^d}z S_s^{EP}[\alpha(\cdot,z)](\eta_{0}) - vtT} \\
&\quad= \abs{\int_0^{tT}\suml_{z\in\bZ^d}z S_s^{EP}[\alpha(\cdot,z)](\eta_{0})-\int \int_0^{tT}\suml_{z\in\bZ^d}z S_s^{EP}[\alpha(\cdot,z)](\xi)\,\mu^{EP}(d\xi)} \\
&\quad \leq C_a \normb{\alpha}_1
\end{align*}
by Theorem \ref{thm:EP-estimate}. Similarly, the part after $tT$ almost annihilates with the integral from $0$ to $T-tT$:
\begin{align*}
&\abs{\int_0^{T-tT} \suml_{z\in\bZ^d}z S_s^{EP}[\alpha(\cdot,z)](\theta_{-X_{tT}}\eta_{tT}) - \int_{tT}^T\suml_{z\in\bZ^d}z S_s^{EP}[\alpha(\cdot,z)](\eta_{0})} \\
&\quad= \abs{\int_0^{T-tT} \suml_{z\in\bZ^d}z S_s^{EP}[\alpha(\cdot,z)](\theta_{-X_{tT}}\eta_{tT}) - \bE_{\eta_0,0} \int_{0}^{T-tT}\suml_{z\in\bZ^d}z S_s^{EP}[\alpha(\cdot,z)](\theta_{-X_{tT}}\eta_{tT})} \\
&\quad \leq C_a\normb{\alpha}_1.
\end{align*}
So 
\[ \abs{M_T(t) - \frac{X_tT-vtT}{T^{\frac12}}} \leq \frac{2C_a\normb{\alpha_1}}{T^{\frac12}}, \]
and the convergence of the drift-adjusted random walk follows to Brownian motion follows from the convergence of $M_T$.\qed
\end{proof}
\subsection{Some remarks on the variance}
A first comment is that the variance \eqref{eq:CLT-var} is indeed non-degenerate. We remember that the generator $L$ acts as a positive operator, as $A_\eta(\eta,0) = 0$ and $id_x(\eta,0)=0$. By splitting $L = L^E + L^{RW}$, we furthermore see that either $L^E \left[A_\eta A_\eta^T\right](\eta)$ is $\mu^{EP}$-a.s. 0, which implies $A_\eta =0$, or the variance is already positive by contributions from the environment alone. If $A_\eta=0$, then the second half becomes 
\[ L^{RW}\left[id_x id_x^T\right](\eta,0) = \suml_{z\in \bZ^d} z z^T \alpha(\eta,z), \]
which is 0 if and only of the rates $\alpha$ are 0 $\mu^{EP}$-a.s., in which case the walker is degenerate by construction.

However we can also use the explicit formulation of the variance to study the behaviour as the speed of the environment is increased. For a positive $\lambda$ denote by
\[ L^\lambda = \lambda L^E + L^{RW} \]
the generator where the environment runs at speed $\lambda$. By noticing that the new system corresponds to one where the rates are scaled by $1/\lambda$ plus a rescaling of time by $\lambda$ (or by following the proof of Theorem \ref{thm:EP-estimate}) one can see that $A^\lambda_\eta \in O(\lambda^{-1})$. Hence
\[ \lim_{\lambda\to\infty} \lambda L^E \left[A^{\lambda}_\eta (A^{\lambda}_\eta)^T \right](\eta) = 0 \]
and 
\[ \lim_{\lambda\to\infty} L^{RW} \left[(A^{\lambda}_\eta+id_x) (A^{\lambda}_\eta+id_x)^T \right](\eta) = L^{RW} \left[id_x id_x^T \right](\eta) = \suml_{z\in\bZ^d} z z^T \alpha(\eta,z). \]
As the ergodic measure of the environment process converges to $\mu^E$, the ergodic measure of the environment, we obtain that the variance $\Sigma_\lambda^2$ converges to 
\[ \int \suml_{z\in\bZ^d} z z^T \alpha(\eta,z)\,\mu^E(d\eta), \]
the variance when averaging of the environment.

\section{Concentration estimates}\label{section:RWRE-concentration}
In this section we will obtain detailed results about the deviation from the mean of additive functionals of the environment process and of the position of the walker itself. It is comparatively easy to obtain those from general methods.

Let us do a quick overview of the general ingredients we use here to obtain concentration of additive functionals of Markov processes. Those are general and not specific to this setting. One part is the existence of a suitable estimate of the form
\begin{align}\label{eq:concentration-condition-1}
[L(g-g(y))^2](y) \leq R \normb{g}^2. 	
\end{align}
 The second part is an estimate of
\begin{align}\label{eq:concentration-condition-2}
\normb{\int_0^T S_t f\,dt} \leq C_T \normb{f}.	
\end{align}
In general, an estimate of $\norm{\int_0^T S_tf\,dt}_{osc}$ is also convenient, but in our case that is implied, as $\norm{g}_{osc}\leq \normb{g}$ (note that $\normb{\cdot}$ could be any kind of norm-like object in a different context). If estimates \eqref{eq:concentration-condition-1} and \eqref{eq:concentration-condition-2} are satisfied, then concentration estimates follow for additive functionals $\int_0^T f(X_t)\,dt$. 
The reader is referred to \cite{} for a more in-depth look at those methods. Section \ref{section:concentration} contains similar methods to obtain concentration for $f(X_T)$ instead of additive functionals.

In our context, Assumption 1b guarantees \eqref{eq:concentration-condition-2} for the environment, and Theorem \ref{thm:EP-estimate},b) lifts that to the environment process. Similarly, Assumption 2 provides \eqref{eq:concentration-condition-1} for the environment and Lemma \ref{lemma:EP-A2} for the environment process.

\subsection{Concentration of additive functionals of the environment process}
\begin{theorem}\label{thm:concentration-EP-exp}
Assume Assumptions 1b and 2. Then there exists a constant $D>0$ so that for any $f:\Omega \to \bR$ with $\normb{f}<\infty$ the deviation estimate
\[ \bP^{EP}_{\nu_1} \left(\int_0^T f(\eta^{EP}_t)\,dt > C_a\normb{f} (r+1) + \int_0^T \nu_2 \left(S^{EP}_t f\right)\,dt\right) 
\leq e^{-\frac{ r^2}{TD}}
\]
holds for any $r\geq0$ and any probability measures $\nu_1,\nu_2$ on $\Omega$. In the case that $\nu_1,\nu_2$ are equal and point measures, the same holds with $(r+1)$ replaced by $r$.
\end{theorem}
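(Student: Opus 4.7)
The plan is to realise $F := \int_0^T f(\eta^{EP}_t)\,dt$ as the terminal value of a Dynkin-type martingale coming from the time-reversed semigroup, and then apply the backwards-martingale concentration machinery referenced in Section \ref{section:concentration}. Concretely, I would set
\[ g_t(\eta) := \int_0^{T-t} S^{EP}_u f(\eta)\,du,\qquad 0\leq t\leq T, \]
and observe that $\partial_t g_t+L^{EP}g_t=-f$, so by Dynkin's formula
\[ M_t := \int_0^t f(\eta^{EP}_s)\,ds + g_t(\eta^{EP}_t) \]
is a $\bP^{EP}_{\nu_1}$-martingale with $M_T=F$ and $M_0=g_0(\eta^{EP}_0)$.

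Next I would isolate the non-martingale contribution via
\[ F - \int_0^T \nu_2(S^{EP}_t f)\,dt \;=\; (M_T-M_0) \;+\; \bigl(g_0(\eta^{EP}_0)-\nu_2(g_0)\bigr). \]
Theorem \ref{thm:EP-estimate}(a) directly gives $\norm{g_0}_{osc}\leq C_a\normb{f}$, so the second summand is \emph{deterministically} bounded by $C_a\normb{f}$; hence the event on the left of the claim is contained in $\{M_T-M_0>C_a\normb{f}\,r\}$. Moreover, when $\nu_1=\nu_2=\delta_\eta$ the second summand vanishes identically, which is precisely the situation where $(r+1)$ can be replaced by $r$.

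For the martingale $M_T-M_0$ the plan is to invoke the exponential concentration for backwards martingales from the Appendix, whose two structural inputs are exactly \eqref{eq:concentration-condition-1} and \eqref{eq:concentration-condition-2} applied to $L^{EP}$ and the family $(g_t)_{0\leq t\leq T}$. Lemma \ref{lemma:EP-A2} supplies
\[ \oLEP(g_t-g_t(\eta))^2(\eta)\leq R^{EP}\normb{g_t}^2, \]
while Theorem \ref{thm:EP-estimate}(b) yields $\normb{g_t}\leq C_b\normb{f}$ uniformly in $t\in[0,T]$ (since $g_t$ is an integral over a subset of $[0,\infty)$). The resulting sub-Gaussian bound then gives
\[ \bP^{EP}_{\nu_1}\bigl(M_T-M_0>C_a\normb{f}\,r\bigr)\leq \exp\!\left(-\frac{r^2}{TD}\right) \]
for $D$ a fixed multiple of $R^{EP}C_b^2/C_a^2$, which is the claim.

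The main obstacle is not the algebra above but establishing the exponential (as opposed to merely $L^2$) concentration for a continuous-time, pure-jump, time-inhomogeneous martingale. This is precisely the role of the backwards-martingale formalism developed in the Appendix: once one verifies the pointwise carré-du-champ bound and the uniform $\normb{\cdot}$ bound on the family $g_t$, the exponential inequality follows from a Gronwall-type control of $\bE\exp(\lambda(M_t-M_0))$ using the appendix's inequalities, with no additional input specific to the random-walk structure.
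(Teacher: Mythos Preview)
Your proposal is essentially the paper's own proof. The martingale $M_t=\int_0^t f(\eta^{EP}_s)\,ds+g_t(\eta^{EP}_t)$ is exactly the backwards martingale $\erwc{\int_0^T f(\eta^{EP}_s)\,ds}{\fF_t}$ that Lemma~\ref{lemma:exponential-concentration} is built around, and the paper feeds into that lemma precisely the two inputs you identify: $\norm{g_t}_{osc}\leq C_a\normb{f}$ from Theorem~\ref{thm:EP-estimate}(a), and $\oLEP(g_t-g_t(\eta))^2(\eta)\leq R^{EP}C_b^2\normb{f}^2$ from Lemma~\ref{lemma:EP-A2} combined with Theorem~\ref{thm:EP-estimate}(b), arriving at the same constant $D'=R^{EP}C_b^2/C_a^2$.

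One point you gloss over: the Appendix machinery (Lemma~\ref{lemma:exponential-concentration}) delivers a \emph{Bernstein}-type bound $\exp\bigl(-\tfrac{r^2/2}{TD'+r/3}\bigr)$, not the pure sub-Gaussian $\exp(-r^2/(TD))$ asserted in the theorem. The paper closes this gap with one extra observation: since $\norm{f}_{osc}\leq\normb{f}$, the additive functional cannot deviate from its mean by more than $T\normb{f}$, so the probability on the left is identically zero once $r\geq T/C_a$; for $r<T/C_a$ one may therefore replace $r/3$ in the denominator by $T/(3C_a)$ and absorb everything into a single constant $D=2D'+2/(3C_a)$. Your sketch needs this last sentence to match the stated conclusion exactly.
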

The idea of the proof is to use Lemma \ref{lemma:exponential-concentration}. Before starting with the proof, we state a fact which is highly useful to prove the conditions of the general concentration estimates.
\begin{lemma}\label{lemma:oL-trick}\mbox{}
\begin{enumerate}
\item Let $f: \Omega\to\bR$ with $\norm{f}_{osc}<\infty$. Then
\[ \oL[(f-f(\eta))^k](\eta) \leq \norm{f}_{osc}^{k-2}\oL(f-f(\eta))^2(\eta); \]
\item Fix $\eta\in \Omega$, and let $g_\eta, f_\eta:E\to\bR$ satisfy $f_\eta(\eta),g_\eta(\eta)=0$ and $f_\eta\leq g_\eta$. Then
\[ \oL f_\eta \leq \oL g_\eta . \]
\end{enumerate}
\end{lemma}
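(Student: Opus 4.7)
The plan is to prove (b) first, by monotonicity of the underlying Markov semigroup, and then obtain (a) as an immediate consequence of (b) via a crude pointwise inequality.

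For (b), the key fact is that every semigroup in play ($S^E_\epsilon$, $S^{EP}_\epsilon$, or the joint $S_\epsilon$) is Markovian and therefore a positive operator: writing $S_\epsilon h(\eta) = \hE_\eta h(\,\cdot\,)$, the inequality $f_\eta \leq g_\eta$ pointwise implies $S_\epsilon f_\eta \leq S_\epsilon g_\eta$ pointwise. Evaluating at the distinguished basepoint $\eta$ and subtracting the (equal, vanishing) values $f_\eta(\eta)=g_\eta(\eta)=0$ yields
\[ \frac{1}{\epsilon}S_\epsilon f_\eta(\eta) \;\leq\; \frac{1}{\epsilon}S_\epsilon g_\eta(\eta) \]
for every $\epsilon>0$. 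Taking $\limsup$ as $\epsilon\to 0$ on both sides (monotonicity of $\limsup$ under pointwise comparison of the pre-limit quantities) gives $\oL f_\eta(\eta)\leq\oL g_\eta(\eta)$, which is (b).

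For (a), I would choose $f_\eta(\xi) := (f(\xi)-f(\eta))^k$ and $g_\eta(\xi) := \norm{f}_{osc}^{k-2}(f(\xi)-f(\eta))^2$. Both functions vanish at $\xi=\eta$, and the required pointwise inequality is immediate from $|f(\xi)-f(\eta)|\leq \norm{f}_{osc}$: for integer $k\geq 2$,
\[ (f(\xi)-f(\eta))^k \;\leq\; |f(\xi)-f(\eta)|^k \;\leq\; \norm{f}_{osc}^{k-2}\,(f(\xi)-f(\eta))^2. \]
Applying part (b) to this pair yields exactly the bound claimed in (a).

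There is no genuine obstacle here; the lemma is really the observation that the ``limsup-generator'' $\oL$ inherits positivity and monotonicity from the Markov semigroup, and that a bounded function of $\norm{\cdot}_{osc}$ admits a crude square domination when evaluated against its centered value. The only point worth flagging is that $\oL$ is defined as a $\limsup$ and may in principle be $+\infty$; this affects neither assertion, since the displayed inequalities remain valid in the extended reals and are vacuous if the right-hand sides are infinite.
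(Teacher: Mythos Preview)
Your proof is correct and matches the paper's own argument essentially verbatim: the paper also derives (a) from (b) with the same choice $f_\eta=(f-f(\eta))^k$, $g_\eta=\norm{f}_{osc}^{k-2}(f-f(\eta))^2$, and proves (b) in one line by positivity of the semigroup together with $f_\eta(\eta)=g_\eta(\eta)=0$. Your additional remark about the extended-real interpretation when $\oL$ is infinite is a harmless clarification not present in the paper.
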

\begin{proof}
a) Is a direct consequence of b) with $f_\eta = (f-f(\eta))^k$ and $g_\eta = \norm{f}_{osc}^{k-2}(f-f(\eta))^2$.

b) Follows by definition of $\oL$: $\oL f_\eta = \limsup_{\epsilon\to0}\frac1\epsilon S_\epsilon f_\eta \leq \limsup_{\epsilon\to0} \frac1\epsilon S_\epsilon g_\eta = \oL g_\eta$.
\qed
\end{proof}
\begin{remark}
This lemma is completely general and not restricted to this context.
\end{remark}
\begin{proof}[Proof of Theorem \ref{thm:concentration-EP-exp}]
We want to apply Lemma \ref{lemma:exponential-concentration}. So we first prove that 
\begin{align}\label{eq:proof-1}
\oLEP \left(\int_0^{T'} S^{EP}_t f - S^{EP}_tf(\eta)\,dt\right)^k(\eta) < D'\left(C_a\normb{f}\right)^k	
\end{align}
for any $T'>0, \eta \in \Omega$ and $k\in \bN, k\geq2$.
Recall that by Theorem \ref{thm:EP-estimate} part a),
\[ \norm{\int_0^{T'} S_t^{EP}f}_{osc} \leq C_a \normb{f}. \]
Therewith, using Lemma \ref{lemma:oL-trick} a),
\begin{align*}
 &\oLEP \left(\int_0^{T'} S^{EP}_t f - S^{EP}_tf(\eta)\,dt\right)^k(\eta) \\
 &\quad\leq \left(C_a\normb{f}\right)^{k-2} \oLEP \left(\int_0^{T'} S^{EP}_t f - S^{EP}_tf(\eta)\,dt\right)^2(\eta).
\end{align*}
By Lemma \ref{lemma:EP-A2},
\begin{align*}
&\oLEP \left(\int_0^{T'} S^{EP}_t f - S^{EP}_tf(\eta)\,dt\right)^2(\eta) \\
&\quad\quad \leq R^{EP}\normb{\int_0^{T'} S^{EP}_t f - S^{EP}_tf(\eta)\,dt }^2 .
\end{align*}
Using Theorem \ref{thm:EP-estimate} part b) for the estimate 
\[ \normb{\int_0^{T'} S^{EP}_t f - S^{EP}_tf(\eta)\,dt}\leq C_b\normb{f}, \]
we choose $D'= \frac{R^{EP}C_b^2}{C_a^2}$ to obtain \eqref{eq:proof-1}. As a direct consequence, Lemma \ref{lemma:exponential-concentration} implies
\[ \bP^{EP}_{\nu_1} \left(\int_0^T f(\eta^{EP}_t)\,dt > C_a\normb{f} (r+1) + \int_0^T \nu_2 \left(S^{EP}_t f\right)\,dt\right) 
\leq e^{-\frac{ \frac12r^2}{TD'+\frac13r}}.
\]
Since $\normb{f}\geq \norm{f}_{osc}$, the left-hand-side is in fact 0 for $r\geq \frac{T}{C_a}$. So we can further estimate that probability by replacing $\frac13 r$ by $\frac{T}{3C_a}$. Choosing 
$D= 2D'+\frac{2}{3C_a}$ completes the proof.
 \qed 
\end{proof}
\subsection{Concentration estimates for the position of the walker}
Note how Theorem \ref{thm:concentration-EP-exp} already gives us a strong concentration property for the rates of the walker $X_t$. 
However, it is a bit more tricky to obtain good concentration estimates for the position of the walker itself.

For concentration results for the position of the walker, we need moment conditions on the transition rates of the walker which are comparable to the strength of the concentration estimate. For example, if the jumps are of bounded size or have some exponential moment, then we obtain a concentration estimate which is Gaussian for small and exponential for large deviations. Note that it is impossible to obtain pure Gaussian concentration, as the example of jumps with rate 1 of size 1 to the right, independent of the environment, i.e. a Poisson process, shows.
\begin{theorem}\label{thm:concentration-walker-exp}
Assume Assumptions 1b and 2. Also assume that $\normb{\alpha}_1<\infty$ and that there exists some $M<\infty$ so that the bound $\norm{\alpha}_k \leq M^k$ is satisfied for any $k\in\bN$. Then there exist constants $c_1,c_2>0$ so that for any probability distributions $\nu_1,\nu_2$ on $\Omega$ and any $r>0$
\[ \bP_{\nu_1,0}\left( \norm{X_T- \bE_{\nu_2,0}X_T}> c_1 r+2dC_a\normb{\alpha}_1\right) \leq 2de^{-\frac{\frac12 r^2}{Tc_2 + \frac13r}}. \]
\end{theorem}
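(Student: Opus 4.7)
The plan is to reduce the $d$-dimensional tail bound to $2d$ one-dimensional estimates via a union bound over the unit directions $v\in\{\pm e_1,\dots,\pm e_d\}$, which explains the prefactor $2d$ and motivates treating each direction separately through the backwards-martingale machinery from the appendix. For any such $v$, set $f_v(\eta,x):=\langle x,v\rangle$ and recall the decomposition $S_t f_v(\eta,x)=\Phi_{0,t}(\eta,x)+\langle x,v\rangle$ used in the proof of Theorem \ref{thm:CLT}, where $\Phi_{0,t}(\eta,x):=\suml_z\langle z,v\rangle\int_0^t S_s^{EP}[\alpha(\cdot,z)](\theta_{-x}\eta)\,ds$. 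A first consequence is that $\bE_{\nu,0}\langle X_T,v\rangle=\nu(\Phi_{0,T}(\cdot,0))$, so Theorem \ref{thm:EP-estimate}a gives $|\bE_{\nu_1,0}\langle X_T,v\rangle-\bE_{\nu_2,0}\langle X_T,v\rangle|\leq C_a\normb{\alpha}_1$ uniformly in $T$; summed over the $2d$ directions this produces the deterministic $2dC_a\normb{\alpha}_1$ offset in the statement.

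It remains to bound $\bP_{\nu_1,0}(\langle X_T,v\rangle-\bE_{\nu_1,0}\langle X_T,v\rangle>c_1 r)$ for each $v$. I would introduce the backwards martingale
\[ M_T(t):=\erwc{\langle X_T,v\rangle}{\fF_{tT}}-\erwc{\langle X_T,v\rangle}{\fF_0}=S_{T-tT}f_v(\eta_{tT},X_{tT})-S_T f_v(\eta_0,0), \]
so that $M_T(1)$ equals the quantity of interest after averaging over the initial state. The plan is then to invoke the exponential concentration inequality for terminal values $f(\eta_T,X_T)$ from Section \ref{section:concentration} (the analogue of Lemma \ref{lemma:exponential-concentration}), whose hypothesis is a uniform-in-$s$ moment bound of the form $\oL(S_s f_v-S_s f_v(\eta,x))^k(\eta,x)\leq \kappa_1\kappa_2^{k-2}$ for every $k\geq 2$.

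Verification of this moment bound is the technical core. The difference $S_s f_v(\eta,x)-S_s f_v(\eta',x')$ splits into a bounded $\Phi$-piece with oscillation $\leq C_a\normb{\alpha}_1$ (Theorem \ref{thm:EP-estimate}a) and triple norm $\leq C_b\normb{\alpha}_1$ (Theorem \ref{thm:EP-estimate}b), plus a linear piece $\langle x-x',v\rangle$. Splitting $L=L^E+L^{RW}$, the environment contribution $\oL^E(\cdot)^k$ only sees the $\Phi$-piece and is handled by Lemma \ref{lemma:EP-A2} combined with Lemma \ref{lemma:oL-trick}a. The walker contribution reads $\suml_z\alpha(\theta_{-x}\eta,z)\bigl(\Phi_{0,s}(\eta,x+z)-\Phi_{0,s}(\eta,x)+\langle z,v\rangle\bigr)^k$; since each $\Phi$-difference is uniformly bounded, this is controlled by $\suml_z\alpha(\theta_{-x}\eta,z)\norm{z}^k$ up to lower-order terms, and the hypothesis $\norm{\alpha}_k\leq M^k$ for every $k$ is precisely what provides the required exponential control of the jump moments.

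The main obstacle is exactly this last point: because $f_v$ is unbounded the usual oscillation-norm estimates do not apply, so one must trade boundedness for control of high moments of the jump measure $\alpha(\eta,\cdot)$; without the scaling $\norm{\alpha}_k\leq M^k$ one would only get sub-exponential, not Bernstein-type, tails. Once the Bernstein inequality is applied in each of the $2d$ directions and assembled by union bound, the claim follows with $c_1,c_2$ depending only on $C_a,C_b,M,R^{EP}$ and $\normb{\alpha}_1$.
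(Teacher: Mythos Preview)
Your proposal is correct and follows essentially the same approach as the paper's proof: union bound over the $2d$ signed coordinate directions, verification of the uniform moment bound $\oL(S_tf_v-S_tf_v(\eta,x))^k(\eta,x)\leq c_1c_2^k$ (the paper isolates this as Lemma \ref{lemma:walker-oL-estimate}), and application of Corollary \ref{corollary:exponential-point} together with Theorem \ref{thm:EP-estimate}a to absorb the $C_a\normb{\alpha}_1$ offset. The only cosmetic difference is that you split the generator $L=L^E+L^{RW}$ to establish the moment bound, whereas the paper first splits the function via $(a+b)^k\leq 2^k(a^k+b^k)$ into the $\Phi$-piece and the $f_v$-piece and then applies $\oL$ (rewriting the $\Phi$-contribution through $\oLEP$); both routes yield the same estimate.
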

If only some moments for the jumps of the walker exist, then we still get a concentration estimate, although it is correspondingly weaker.
\begin{theorem}\label{thm:concentration-walker-moment}
Assume Assumptions 1b and 2. Fix $p>1$. Assume that $\normb{\alpha}_1<\infty$ and $\norm{\alpha}_p < \infty$. Then there exist a constant $c>0$ depending on the process, but not on $p$, and a constant $c_p$ depending only on $p$ so that for any probability distributions $\nu_1,\nu_2$ on $\Omega$ and any $r>0$
\[ \bP_{\nu_1,0}\left( \norm{X_T- \bE_{\nu_2,0}X_T}> r\right) \leq c_p \frac{c^p(T^{\frac p2}+1)}{r^p}. \]
\end{theorem}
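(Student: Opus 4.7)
The plan is to control $\bE\|X_T-\bE_{\nu_2,0}X_T\|^p$ and conclude by Markov's inequality. We decompose the walker into a compensated martingale plus a drift that is an additive functional of the environment process. The drift is then handled by Theorem~\ref{thm:concentration-EP-exp}, while Burkholder-Davis-Gundy moment inequalities take care of the martingale piece.

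First, the bias between the two reference measures is harmless. Writing $\bE_{\nu,0}X_T=\int_0^T\nu(S_s^{EP}v)\,ds$ for $v(\eta):=\sum_z z\alpha(\eta,z)$ (with $\normb{v}\leq\normb{\alpha}_1<\infty$), Theorem~\ref{thm:EP-estimate}(a) gives $\|\bE_{\nu_1,0}X_T-\bE_{\nu_2,0}X_T\|\leq dC_a\normb{\alpha}_1$ uniformly in $T$, and this constant is absorbed into $c_p$ (for $r$ below it the claimed bound is trivial). Setting $\nu_1=\nu_2=:\nu$, we split coordinate-wise
\[ X_T^{(i)}-\bE_{\nu,0}X_T^{(i)}=M_T^{(i)}+N_T^{(i)}, \]
where $M_T^{(i)}:=X_T^{(i)}-\int_0^T v_i(\theta_{-X_s}\eta_s)\,ds$ is a local martingale and $N_T^{(i)}$ is the corresponding centered additive functional of the environment process.

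For $N_T^{(i)}$, Theorem~\ref{thm:concentration-EP-exp} applied with $f=v_i$ produces Gaussian-type tails $\exp(-r^2/(TD))$; integrating $\bE|N_T^{(i)}|^p=\int_0^\infty pr^{p-1}\bP(|N_T^{(i)}|>r)\,dr$ yields $\bE|N_T^{(i)}|^p\lesssim_p T^{p/2}+1$. For $M_T^{(i)}$, Burkholder's inequality gives $\bE|M_T^{(i)}|^p\leq C_p\bE[M^{(i)}]_T^{p/2}$ with $[M^{(i)}]_T=\sum_{s\leq T}(\Delta X_s^{(i)})^2$. For $p\geq 2$, the Burkholder-Rosenthal refinement combined with $\langle M^{(i)}\rangle_T\leq T\norm{\alpha}_2^2$ and $\bE\sum_s|\Delta X_s|^p\leq T\norm{\alpha}_p^p$ delivers $\bE|M_T^{(i)}|^p\leq C_p(T^{p/2}\norm{\alpha}_2^p+T\norm{\alpha}_p^p)$, and $\norm{\alpha}_2$ is controlled by $\norm{\alpha}_p$ and $\norm{\alpha}_0$ through interpolation. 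For $p\in(1,2)$ the concavity of $x\mapsto x^{p/2}$ gives $[M^{(i)}]_T^{p/2}\leq\sum|\Delta X_s^{(i)}|^p$, so $\bE|M_T^{(i)}|^p\leq C_pT\norm{\alpha}_p^p$. Summing over $i$ and applying Markov completes the main estimate.

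The principal obstacle is the sub-quadratic range $p\in(1,2)$, where the martingale bound above is only of order $T$ rather than $T^{p/2}$. To recover the stated dependence one truncates the jumps of $X$ at a level $\lambda$: the probability that any jump larger than $\lambda$ occurs on $[0,T]$ is at most $T\norm{\alpha}_p^p/\lambda^p$, and on its complement the truncated walker has bounded jumps (its rates $\tilde\alpha$ satisfying $\norm{\tilde\alpha}_k\leq\lambda\cdot\norm{\alpha}_0^{1/k}$ uniformly in $k$), so Theorem~\ref{thm:concentration-walker-exp} applies and yields a Gaussian/exponential deviation for the truncated process. Optimizing $\lambda$ in $r$ and $T$ interpolates the Gaussian piece with the heavy-tail price of large jumps; a final bookkeeping step keeps the constant $c$ independent of $p$, with only $c_p$ collecting the universal Burkholder-Rosenthal factors.
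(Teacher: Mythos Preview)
Your approach differs from the paper's. The paper applies Theorem~\ref{thm:moment-estimate} from the appendix (a BDG-type bound for the backwards martingale $t\mapsto\bE[\langle X_T,e_i\rangle\mid\fF_t]$) and estimates its three terms via Lemma~\ref{lemma:walker-oL-estimate} and Theorem~\ref{thm:EP-estimate}. You instead take the classical decomposition $X_T=M_T+\int_0^T v(\eta_s^{EP})\,ds$, bound the centred drift by integrating the Gaussian tail from Theorem~\ref{thm:concentration-EP-exp}, and bound $M_T$ by Burkholder--Rosenthal using $\langle M\rangle_T\le T\|\alpha\|_2^2$ and $\bE\sum_s|\Delta X_s|^p\le T\|\alpha\|_p^p$. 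For $p\ge 2$ this is correct and somewhat more elementary, bypassing the backwards-martingale framework of Section~\ref{section:concentration}; the paper's route has the advantage that the same machinery also drives the CLT in Section~\ref{section:CLT}.

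Your treatment of $1<p<2$, however, has a real gap. (The paper's sketch, relying on Theorem~\ref{thm:moment-estimate} which is stated only for $p\ge 2$, does not cover this range explicitly either.) When you apply Theorem~\ref{thm:concentration-walker-exp} to the $\lambda$-truncated walker, the constants there depend on $\lambda$ through $M\sim\lambda$ in Lemma~\ref{lemma:walker-oL-estimate}; the Gaussian scale becomes $T\lambda^2$, not $T$, and optimising over $\lambda$ against the big-jump cost $T\|\alpha\|_p^p/\lambda^p$ does not produce $T^{p/2}/r^p$. Indeed it cannot: with only $\|\alpha\|_p<\infty$ and $p<2$ one may have $\|\alpha\|_2=\infty$ (take $\alpha(\eta,z)\asymp|z|^{-1-p}(\log|z|)^{-2}$ independent of~$\eta$), and then $X_T$ lies in the domain of attraction of a $p$-stable law and fluctuates on scale $T^{1/p}$, so $\bP(|X_T|>T^{1/p})$ stays bounded away from zero while the claimed bound $T^{p/2}/T\to 0$. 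Thus the ``final bookkeeping step'' is exactly where the argument fails; the statement itself appears to require $p\ge 2$ (or an additional second-moment assumption on the jumps).
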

The remainder of this section will deal with the proofs of the theorems.

The next lemma will verify the conditions of more general theorems in Section \ref{section:concentration}, from which the results for the walker are immediate. 
\begin{lemma}\label{lemma:walker-oL-estimate}
Assume Assumptions 1b and 2. For any vector $v\in\bR^d$ with $\norm{v}=1$ we consider the function $f_v : \Omega \times \bZ^d \to \bR, f_v(\eta,x) = <x,v>$. Then
\[ \oL\left(S_t f_v- S_t f_v(\eta,x)\right)^k(\eta,x) \leq 2^k (R^{EP})C_b^2 C_a^{k-2}\normb{\alpha}_1^k + 2^k\norm{\alpha}_k^k. \]
\end{lemma}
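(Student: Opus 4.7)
The idea is to decompose $S_t f_v$ into a bounded part (a function of the environment process) and the unbounded but explicit part $f_v$, treat the two pieces separately, and combine them via a convexity inequality.

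First, reusing the formula already derived in the proof of Proposition \ref{prop:martingale-CLT}, I write
\[ S_t f_v(\eta',x') = \Phi_{0,t}(\eta',x') + \langle x',v\rangle, \qquad \Phi_{0,t}(\eta',x') = \tilde\Phi_{0,t}(\theta_{-x'}\eta'), \]
where $\tilde\Phi_{0,t}(\xi) := \sum_{z}\langle z,v\rangle\int_0^t S_s^{EP}[\alpha(\cdot,z)](\xi)\,ds$ is a function on $\Omega$ only. Setting $A(\eta',x') := \Phi_{0,t}(\eta',x') - \Phi_{0,t}(\eta,x)$ and $B(\eta',x') := \langle x'-x,v\rangle$, the integrand of interest becomes $F := A+B$ with $F(\eta,x)=0$. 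The pointwise convexity bound
\[ F^k \leq |F|^k \leq 2^{k-1}\bigl(|A|^k+|B|^k\bigr) \leq 2^k\bigl(|A|^k+|B|^k\bigr) \]
and Lemma \ref{lemma:oL-trick}(b) (both sides vanish at $(\eta,x)$) yield
\[ \oL F^k(\eta,x) \leq 2^k \oL |A|^k(\eta,x) + 2^k \oL |B|^k(\eta,x). \]

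Next, I bound $\oL |A|^k(\eta,x)$. Since $A$ depends on $(\eta',x')$ only through $\theta_{-x'}\eta'$, the translation invariance of $L^E$ together with the definition of $L^{EP}$ gives $\oL |A|^k(\eta,x) = \oL^{EP}\bigl|\tilde\Phi_{0,t}-\tilde\Phi_{0,t}(\theta_{-x}\eta)\bigr|^k(\theta_{-x}\eta)$. Theorem \ref{thm:EP-estimate}(a) (integrating the inequality inside an absolute value) yields $\norm{\tilde\Phi_{0,t}}_{osc}\leq C_a\normb{\alpha}_1$, while Theorem \ref{thm:EP-estimate}(b) yields $\normb{\tilde\Phi_{0,t}}\leq C_b\normb{\alpha}_1$. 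Applying Lemma \ref{lemma:oL-trick}(a) to reduce from $k$ to $2$ and then Lemma \ref{lemma:EP-A2} gives
\[ \oL |A|^k(\eta,x) \leq \norm{\tilde\Phi_{0,t}}_{osc}^{k-2} R^{EP}\normb{\tilde\Phi_{0,t}}^2 \leq R^{EP} C_b^2 C_a^{k-2}\normb{\alpha}_1^k . \]

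Finally, I bound $\oL |B|^k(\eta,x)$. Here $B$ is a function of $x'$ alone, so the $L^E$-part of $L$ vanishes and only the walker-part of the generator contributes:
\[ \oL |B|^k(\eta,x) = \sum_{z\in\bZ^d}\alpha(\theta_{-x}\eta,z)\,|\langle z,v\rangle|^k \leq \sum_{z\in\bZ^d}\sup_{\eta\in\Omega}\alpha(\eta,z)\norm{z}^k = \norm{\alpha}_k^k . \]
Substituting both bounds into the convexity estimate delivers the claimed inequality. The main care point is the identification of $\oL$ acting on functions of $\theta_{-x'}\eta'$ with $\oL^{EP}$; this is purely a translation-invariance bookkeeping step but is what makes Lemmas \ref{lemma:oL-trick} and \ref{lemma:EP-A2} applicable despite $f_v$ itself having infinite oscillation norm.
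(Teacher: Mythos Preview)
Your proof is correct and follows essentially the same route as the paper: decompose $S_t f_v$ into the environment-process piece $\Phi_{0,t}$ plus the position piece $f_v$, split via Lemma \ref{lemma:oL-trick}(b), bound the first piece by reducing to $\oL^{EP}$ and invoking Theorem \ref{thm:EP-estimate}(a,b) together with Lemmas \ref{lemma:oL-trick}(a) and \ref{lemma:EP-A2}, and compute the second piece directly from the jump part of $L$. Your insertion of absolute values before applying convexity is a small but genuine improvement over the paper's write-up, since the pointwise inequality $(A+B)^k \leq 2^{k-1}(A^k+B^k)$ need not hold for odd $k$, whereas $|A+B|^k \leq 2^{k-1}(|A|^k+|B|^k)$ always does.
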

\begin{proof}
We first note that $f_v$ is in the domain of the generator $L$, with
\[ Lf(\eta,x) = \suml_{z\in\bZ^d}\alpha(\theta_{-x}\eta,z)<z,v>. \]
Since this is uniformly bounded in $\eta$ and $x$ the condition of Proposition \ref{prop:exponential-point} is satisfied. Also observe that
\[ S_t f_v(\eta,x) = \int_0^t S_r^{EP}\underbrace{\left( \suml_{z\in\bZ^d}\alpha(\cdot,z)<z,v>\right)}_{=:g(\cdot)}(\theta_{-x}\eta)\,dr + f_v(\eta,x). \]
With this in mind, using Lemma \ref{lemma:oL-trick}
\begin{align*}
&\oL (S_t f_v -S_t f_v(\eta,x))^k(\eta,x) \\
&\quad= \oL \left( \int_0^r S_r^{EP}g(\theta_{-\cdot}\cdot) - S_r^{EP}g(\theta_{-x}\eta)\,dr + f_v-f_v(\eta,x)\right)^k(\eta,x)	\\
&\quad\leq 2^k \oL \left( \int_0^r S_r^{EP}g(\theta_{-\cdot}\cdot) - S_r^{EP}g(\theta_{-x}\eta)\,dr\right)^k(\eta,x) + 2^k \oL \left( f_v-f_v(\eta,x)\right)^k(\eta,x).
\end{align*}
To estimate the right hand term, as $f_v$ is independent of $\eta$, 
\begin{align*}
 \oL\left(f_v-f_v(\eta,x)\right)^k(\eta,x) &= \suml_{z\in\bZ^d}\alpha(\theta_{-x}\eta,z)<z,v>^k \\
 &\leq \suml_{z\in\bZ^d} \supl_{\eta \in \Omega}\alpha(\eta,z) \norm{z}^k = \norm{\alpha}_k^k.
\end{align*}
To treat the left hand term, we first notice that it can be rewritten in terms of the generator of environment process:
\[ \oL \left( \int_0^r S_r^{EP}g(\theta_{-\cdot}\cdot) - S_r^{EP}g(\theta_{-x}\eta)\,dr\right)^k(\eta,x) = \oLEP \left( \int_0^r S_r^{EP}g - S_r^{EP}g(\theta_{-x}\eta)\,dr\right)^k(\theta_{-x}\eta). \]
Next, we use Theorem \ref{thm:EP-estimate},b) with the fact that $\normb{g}\leq\normb{\alpha}_1$ to obtain that 
\[ \normb{\int_0^t S_r^{EP}g\,dr}\leq C_b \normb{\alpha}_1. \]
Together with Lemma \ref{lemma:EP-A2} we therefore arrive at
\[ \oL \left( \int_0^r S_r^{EP}g(\theta_{-\cdot}\cdot) - S_r^{EP}g(\theta_{-x}\eta)\,dr\right)^2(\eta,x) \leq R^{EP}C_b^2\normb{\alpha}_1^2 .\]
Higher moments we reduce to second moments via Lemma \ref{lemma:oL-trick} utilizing Theorem \ref{thm:EP-estimate},a):
\begin{align*}
 &\abs{ \int_0^r S_r^{EP}g(\theta_{-\cdot}\cdot) - S_r^{EP}g(\theta_{-x}\eta)\,dr }^k(\eta,x) \\
 &\quad\leq C_a^{k-2}\normb{\alpha}_1^{k-2}\left( \int_0^r S_r^{EP}g(\theta_{-\cdot}\cdot) - S_r^{EP}g(\theta_{-x}\eta)\,dr\right)^2(\eta,x) .
 \end{align*}
\qed
\end{proof}

\begin{proof}[Proof of Theorem \ref{thm:concentration-walker-exp}]
Let $e_i$ be an orthonormal basis of $\bR^d$, and write $e_{i+d}:=-e_i$. Then
\begin{align*}
&\bP_{\nu_1,0}\left( \norm{X_T- \bE_{\nu_2,0}X_T}> r+2dC_a\normb{\alpha}_1\right) \\
&\quad\leq \suml_{i=1}^{2d} \bP_{\nu_1,0}\left( f_{e_i} (X_T) - \bE_{\nu_2,0} f_{e_i}(X_T) > \frac{1}{2d} r+C_a\normb{\alpha}_1\right). 	
\end{align*}
 
Those probabilities we can estimate with Corollary \ref{corollary:exponential-point}, whose condition is satisfied according to Lemma \ref{lemma:walker-oL-estimate} and the conditions of this theorem. At this point we use the fact that 
\[ S_Tf_{e_i}(\eta,0)-S_Tf_{e_i}(\xi,0) = \int_0^T S^{EP}_t g_i(\eta) - S^{EP}_t g_i(\xi)\,dt \]
with $g_i(\eta) = \suml_{z\in\bZ^d} \alpha(\eta,z)f_{e_i}(z)$ to get the estimate $C_a\normb{\alpha}_1$ for the constant $a$ in Corollary \ref{corollary:exponential-point}. The constants $c_1,c_2$ are chosen appropriately.
\qed
\end{proof}

\begin{proof}[Proof of Theorem \ref{thm:concentration-walker-moment}]
We will only give a rough sketch of the proof.
It is an application of Theorem \ref{thm:moment-estimate} to obtain an estimate on the $p$th moment plus Markov's inequality. 
To use the estimate in Theorem \ref{thm:moment-estimate},
the general idea is to use the fact that 
\[ \bE_x X_t = \int_0^t S^{EP}_{s}[\suml{z\in\bZ^d}z\alpha(\cdot,z)](x) + x, \]
together with applications of Theorem \ref{thm:EP-estimate}. 
The first term on the right hand side of the estimate in Theorem \ref{thm:moment-estimate} is the main contributor and can be estimated by
\[ T^{\frac p2}2^{\frac p2}\left[(C_b^p\normb{\alpha}_1^p(R^E + \norm{L^J}_{\normb{\cdot}\to\normb{\cdot}})^p +\norm{\alpha}_2^p\right] . \]
The second term is estimated by
\[ 2^p\left(C_b^p\normb{\alpha}_1^p+\norm{\alpha}_p^p\right) \]
and the third by
\[ (C_a\normb{\alpha}_1)^p .\]
Combining various constants, we then get the estimate
\[ \bE_{\nu_1}\norm{X_T-\bE_{\nu_2}X_T}^p \leq c_p C^p(T^{\frac p2}+1). \]\qed
\end{proof}

\subsection{Transience and recurrence}
Using the concentration estimates and the convergence to Brownian motion, questions like transience or recurrence of the walker can be treated rather easily. The following two statements about transience and recurrence are not meant to be exhaustive, instead they showcase how typical use the stronger statements of the previous sections.
\begin{theorem}[Transience]
Assume Assumptions 1b and 2 and suppose $\norm{\alpha}_{2+\epsilon}<\infty$ for some $\epsilon>0$. 
If asymptotic speed $v=\lim_{t\to\infty}\frac{X_t}{t}$ is non-zero, then the random walk $X_t$ is transient.
\end{theorem}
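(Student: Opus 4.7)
The plan is to combine the LLN (Theorem~\ref{thm:LLN}) with the concentration estimate (Theorem~\ref{thm:concentration-walker-moment}) to show $\norm{X_t}\to\infty$ almost surely, which immediately yields transience. The moment hypothesis $\norm{\alpha}_{2+\epsilon}<\infty$ together with Assumptions~1b and 2 is precisely what powers Theorem~\ref{thm:concentration-walker-moment} at $p=2+\epsilon$, which is why these stronger conditions appear in the statement; the weaker Assumption 1a alone (via the a.s.~LLN) would in fact already suffice for the qualitative claim.

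First I would control the expected displacement. As in the proof of Theorem~\ref{thm:CLT}, applying Theorem~\ref{thm:EP-estimate}\,a to $g(\eta)=\suml_{z\in\bZ^d} z\alpha(\eta,z)$ yields
\[ \norm{\erwi{\nu,0}{X_T}-vT}\leq C_a\normb{\alpha}_1 \]
uniformly in the initial law $\nu$ and the time $T$. Since $v\neq 0$ by hypothesis, there is a threshold $T_0$ (depending only on $\norm{v}$ and $C_a\normb{\alpha}_1$) such that $\norm{\erwi{\nu,0}{X_T}}\geq \tfrac{3}{4}\norm{v}T$ for all $T\geq T_0$.

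Next I would apply Theorem~\ref{thm:concentration-walker-moment} with $p=2+\epsilon$ and $r=\tfrac14\norm{v}T$, to get for every $T\geq T_0$
\[ \bP_{\nu,0}\!\left(\norm{X_T}<\tfrac{1}{2}\norm{v}T\right)\leq \bP_{\nu,0}\!\left(\norm{X_T-\erwi{\nu,0}{X_T}}>\tfrac{1}{4}\norm{v}T\right) = O\!\left(T^{-1-\epsilon/2}\right). \]
Restricting to integer times, the right-hand side is summable, so the Borel-Cantelli lemma gives that almost surely $\norm{X_n}\geq \tfrac{1}{2}\norm{v}n$ for all but finitely many integers $n$. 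For the passage to continuous time I would either (i) invoke directly the almost-sure LLN of Theorem~\ref{thm:LLN}, which already yields $\norm{X_t}/t\to \norm{v}$ a.s., or (ii) bound $\sup_{n\leq t<n+1}\norm{X_t-X_n}$ using that the total jump rate $\supl_\eta\suml_{z\neq 0}\alpha(\eta,z)\leq \norm{\alpha}_1$ is finite and $\norm{\alpha}_{2+\epsilon}<\infty$, and apply Borel-Cantelli once more.

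Transience then follows at once: the finite total jump rate implies that the walker performs only finitely many jumps in every bounded time interval almost surely, and $\norm{X_t}\to\infty$ a.s.\ then forces every fixed site $x\in\bZ^d$ to be visited only finitely often. The only mildly delicate step is the discrete-to-continuous bookkeeping in the concentration-based route; this is the reason one might prefer to close the argument via the a.s.~LLN, which costs nothing additional and sidesteps that technicality entirely.
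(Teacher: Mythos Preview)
Your argument is correct, but it is organised differently from the paper's. The paper does not prove $\norm{X_t}\to\infty$ almost surely; instead it shows that the expected total occupation time at the origin is finite. Concretely, since the total jump rate is bounded by $a:=\suml_{z}\supl_\eta\alpha(\eta,z)<\infty$, each visit to $0$ lasts a time that stochastically dominates an $\exp(a)$ variable, so transience follows once $\int_0^\infty\bP_{\eta,0}(X_t=0)\,dt<\infty$. This integrability is obtained from the same concentration input you use, Theorem~\ref{thm:concentration-walker-moment} with $p=2+\epsilon$, via
\[
\bP_{\eta,0}(X_t=0)\ \leq\ \bP_{\eta,0}\bigl(\norm{X_t-vt}\geq \norm{v}t\bigr)\ =\ O\bigl(t^{-1-\epsilon/2}\bigr),
\]
using $vt=\bE_{\mu^{EP},0}X_t$.

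So both proofs rely on Theorem~\ref{thm:concentration-walker-moment}; the difference is that the paper integrates the tail bound in $t$ to control the expected number of returns, while you sum it over integers and invoke Borel--Cantelli to force $\norm{X_n}\to\infty$. The paper's route yields the slightly stronger conclusion that the \emph{expected} number of visits to $0$ is finite, at the cost of the short holding-time argument; your route avoids that bookkeeping but only gives almost-sure finiteness of the number of visits. Your side remark is also well taken: the almost-sure LLN of Theorem~\ref{thm:LLN} (which needs only Assumption~1a) already gives $\norm{X_t}\to\infty$ a.s., so the hypotheses of the theorem as stated are stronger than necessary for the bare transience statement; the paper's proof, by contrast, genuinely uses the polynomial decay from the concentration estimate.
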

\begin{proof}
Since $a:=\sum_{z\in\bZ^d}\sup_{\eta\in \Omega} \alpha(\eta,z)<\infty$, the duration the walker $X_t$ stays at the origin at each visit is at least $exp(a)$-distributed. Hence it is sufficient for transience to show that $\bP_{0,\eta}(X_t=0)$ is integrable in $t$, which can be seen from Theorem \ref{thm:concentration-walker-moment}:
\begin{align*}
\bP_{\eta,0}(X_t=0) &\leq \bP_{\eta,0}(\abs{X_t-vt}\geq vt) 
\leq \left(c_{2+\epsilon}c^{2+\epsilon}\frac{t^{\frac{2+\epsilon}{2}}+1}{(vt)^{2+\epsilon}}\right)\wedge 1,
\end{align*}
where we used the fact that 
\[ vt = \bE_{\mu^{EP},0}X_t. \]
\qed
\end{proof}
\begin{theorem}[Recurrence]
Let the dimension $d=1$ and assume Assumptions 1b and 2. Suppose the walker only has jumps of size 1, i.e.
\[ \alpha(\eta,z)= 0 \quad \forall\,\eta\in E^{bZ^d},z\neq \{-1,1\}, \]
and that it has 0 speed.
Then $X_t$ is recurrent.
\end{theorem}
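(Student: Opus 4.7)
The strategy is to combine the functional CLT of Theorem~\ref{thm:CLT} applied with $v=0$ with the nearest-neighbour structure of the walker. Zero speed forces the rescaled path to behave like Brownian motion and hence to take values of both signs, and jumps of size $\pm 1$ then force the walker to pass through the origin on its way between them. Concretely, I would first invoke Theorem~\ref{thm:CLT}: with $v=0$, $(T^{-1/2}X_{tT})_{0\le t\le 1}$ converges weakly to $(\sigma W_t)_{0\le t\le 1}$ for a standard Brownian motion $W$. By the remarks following Theorem~\ref{thm:CLT}, $\sigma>0$ unless the walker has vanishing rates $\mu^{EP}$-a.s., in which case $X_t$ never moves and recurrence is trivial, so I may assume $\sigma>0$.

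The heart of the argument is the claim that for every $n_0\in\bN$ and every starting $\eta\in\Omega$,
\[ \bP_{\eta,0}\bigl(X_s>0\text{ for all }s\ge n_0\bigr)=0. \]
To prove this I would condition on $\mathcal F_{n_0}$: given $\mathcal F_{n_0}$ the shifted process $(X_{n_0+s}-X_{n_0})_{s\ge 0}$ is a copy of the random walk started at $0$ in the environment $\theta_{-X_{n_0}}\eta_{n_0}$, to which Theorem~\ref{thm:CLT} again applies. For any $\epsilon>0$ and $T>n_0$,
\[ \bP(X_s>0\ \forall\, s\in[n_0,T]\mid\mathcal F_{n_0})\le \bP\left(\min_{t\in[0,1]}\frac{X_{n_0+t(T-n_0)}-X_{n_0}}{\sqrt{T-n_0}}\ge -\epsilon\ \middle|\ \mathcal F_{n_0}\right)+\ind_{|X_{n_0}|/\sqrt{T-n_0}\ge\epsilon}. \]
As $T\to\infty$ the indicator vanishes for fixed $n_0$, and the first term converges to $\bP(\min_{t\in[0,1]}\sigma W_t\ge -\epsilon)$ by the weak convergence above together with the upper semicontinuity of $w\mapsto\ind_{\min w\ge-\epsilon}$ at paths with $\min w\neq-\epsilon$. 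Letting $\epsilon\downarrow 0$ and using $\bP(\min_{[0,1]}W\ge 0)=0$ yields the claim; the symmetric argument gives $\bP_{\eta,0}(X_s<0\ \forall\, s\ge n_0)=0$.

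To finish, I would observe that because the walker makes only jumps of size $\pm 1$, any path on which $X_s\neq 0$ for all $s\ge n_0$ must remain either in $\{X_s\ge 1\}$ or in $\{X_s\le -1\}$ from time $n_0$ on, and both events have probability $0$. Hence $\bP_{\eta,0}(\exists\, s\ge n_0: X_s=0)=1$ for every $n_0$, and intersecting over $n_0\in\bN$ shows that $X$ visits the origin at arbitrarily large times almost surely, which is the desired recurrence. The main obstacle I anticipate is technical: the transfer of the functional CLT to a random starting state via conditioning on $\mathcal F_{n_0}$ (which is legitimate because Theorem~\ref{thm:CLT} applies from an arbitrary starting measure $\nu$), and the mild care needed around the non-continuity at $c=0$ of the functional $w\mapsto\ind_{\min w\ge c}$ in the Brownian limit, which is precisely what forces the $\epsilon$-buffer above.
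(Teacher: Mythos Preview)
Your proof is correct and follows essentially the same approach as the paper: invoke the functional CLT (Theorem~\ref{thm:CLT}) with $v=0$ to deduce that the path takes both signs at arbitrarily large times, and then use the nearest-neighbour property to force a visit to the origin between any two such times. The paper's argument is considerably terser---it simply asserts that the CLT produces an infinite alternating-sign sequence $t_1<t_2<\dots$---whereas you supply the conditioning and $\epsilon$-buffer details, but the underlying idea is identical.
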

\begin{proof}
By Theorem \ref{thm:CLT}, $T^{-\frac12}X_{tT}$ converges to Brownian motion. Hence there exists(with probability 1) an infinite sequence $t_1<t_2<\dots$ of times with $X_{t_{2n}}<0$ and $X_{t_{2n+1}}>0$, $n\in\bN$. As the walker has only jumps of size 1, it will traverse the origin between $t_n,t_{n+1}$ for any $n\in \bN$. \qed
\end{proof}

\section{Appendix: General concentration results}\label{section:concentration}
In this section we will prove general concentration results for Markov processes. Hence we will forget the
connection to random walks in dynamic random environments. To avoid a clash of notation, we will assume that
$F$ is a polish space, elements of it are denoted $x,y$, $(Y_t)_{t\geq0}$ a Feller process on $F$, with generator $A$ and semigroup $(P_t)_{t\geq0}$ (on an appropriate space like $\cC_0(F), \cC_b(F), \cC(F), \cB(F),...$). The canonical filtration is denoted by $(\fF_t)_{t\geq0}$.

The method of the proofs will use certain martingale approximation.
\begin{notation}\label{notation:martingale}
Given $f:F\to \bR$, we define the martingale
\begin{align*}
 M_T^f(t) &:= \erwc{f(Y_T)}{\fF_t} - \erwc{f(Y_T)}{\fF_0}, \quad\quad t\in[0,T], \\
&= P_{T-t}f(Y_t) - P_T f (Y_0).
\end{align*}
\end{notation}
\begin{remark}
Note that this martingale is different from the canonical martingale $f(Y_t)-\int_0^t A f(Y_s)\,ds$, but there are some similarities.
\end{remark}
To avoid questions regarding the domain of the generator $A$, we also introduce (non-linear) versions of $A$ in a natural way which are defined for all functions.
\begin{notation}
For any $f: F \to \bR$, define
\[ \oA f(x) := \limsup_{\epsilon\to0}\frac1\epsilon (P_\epsilon f(x) - f(x)); \]
\[ \uA f(x) := \liminf_{\epsilon\to0}\frac1\epsilon (P_\epsilon f(x) - f(x)). \]
We call $\oA$ upper generator and $\uA$ lower generator.
\end{notation}
\begin{remark}\mbox{}
\par{a)}
Note that $\oA f(x), \uA f(x) \in \bR \cup \{\pm\infty\}$. But if $f \in dom(A)$, then
$\oA f = \uA f = A f$.
\par{b)} If we look at the martingale 
\begin{align}\label{eq:classical-martingale}
\widetilde{M}^f(t) = f(Y_t)-\int_0^t Af(Y_s)\,ds, 	
\end{align}
then its predictable quadratic variation process is given by
\begin{align}
\left< \widetilde{M}^f \right>_t 
&= \int_0^t A f^2(Y_s) - 2f(Y_s)Af(Y_s)\,ds \nonumber\\
&= \int_0^t \left[A \left(f-f(Y_s)\right)^2\right](Y_s)\,ds \label{eq:classical-predqv}
\end{align}
under the assumption that $f,f^2\in dom(A)$. If that is not the case, then we still have an upper (lower) bound on $\left<\widetilde{M}^f\right>$ when using the upper generator $\oA$ (lower generator $\uA$) in \eqref{eq:classical-predqv}.
\end{remark}
The following lemma shows that $[\oA(f-f(x))(g-g(x))](x)]$ still has the property of a quadratic form.
\begin{lemma}\label{lemma:chauchy-schwarz}
Let, $f,g : F\to\bR$. Then
\[ \abs{\oA[(f-f(x))(g-g(x))](x)}^2 \leq \oA(f-f(x))^2(x)\oA(g-g(x))^2(x). \]
\end{lemma}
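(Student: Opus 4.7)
The plan is to obtain the inequality by first applying the classical Cauchy--Schwarz inequality at the level of the semigroup $P_\epsilon$ (which is the expectation $\bE_x$ for the Feller process starting at $x$), and then taking $\limsup_{\epsilon\to 0}$ after an appropriate rescaling. The crucial observation is that both products $(f-f(x))(g-g(x))$ and $(f-f(x))^2$, $(g-g(x))^2$ vanish at $x$, so for each of these functions $h$ one has the simplified expression
\[
\oA h(x)=\limsup_{\epsilon\to 0}\frac{1}{\epsilon}P_\epsilon h(x),
\]
since the subtracted $h(x)$-term is zero.

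First, for each fixed $\epsilon>0$, the operator $P_\epsilon$ is positive and given by integration against a probability measure, so the standard Cauchy--Schwarz inequality gives
\[
\bigl|P_\epsilon[(f-f(x))(g-g(x))](x)\bigr|
\leq
\sqrt{P_\epsilon[(f-f(x))^2](x)}\cdot
\sqrt{P_\epsilon[(g-g(x))^2](x)}.
\]
Dividing both sides by $\epsilon=\sqrt{\epsilon}\cdot\sqrt{\epsilon}$, I rewrite this as
\[
\frac{1}{\epsilon}\bigl|P_\epsilon[(f-f(x))(g-g(x))](x)\bigr|
\leq
\sqrt{\tfrac{1}{\epsilon}P_\epsilon[(f-f(x))^2](x)}\cdot
\sqrt{\tfrac{1}{\epsilon}P_\epsilon[(g-g(x))^2](x)}.
\]

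Next, I take $\limsup_{\epsilon\to 0}$ on both sides. On the left I use the elementary inequality $\bigl|\limsup a_\epsilon\bigr|\leq \limsup |a_\epsilon|$, and on the right I use that for non-negative quantities the $\limsup$ of a product is bounded by the product of the $\limsup$s. Since $(f-f(x))^2,(g-g(x))^2\geq 0$ and vanish at $x$, both $\frac{1}{\epsilon}P_\epsilon[(f-f(x))^2](x)$ and $\frac{1}{\epsilon}P_\epsilon[(g-g(x))^2](x)$ are non-negative and their $\limsup$s equal $\oA(f-f(x))^2(x)$ and $\oA(g-g(x))^2(x)$ respectively. Squaring the resulting inequality produces exactly
\[
\bigl|\oA[(f-f(x))(g-g(x))](x)\bigr|^2
\leq
\oA(f-f(x))^2(x)\cdot\oA(g-g(x))^2(x),
\]
as desired.

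The only delicate point is handling $\limsup$ of a product of two non-negative sequences that may each diverge to $+\infty$; the inequality $\limsup(a_\epsilon b_\epsilon)\leq (\limsup a_\epsilon)(\limsup b_\epsilon)$ is valid under the convention $0\cdot\infty=\infty$, which is consistent with the statement (if either side on the right equals $+\infty$, the inequality is trivial). I do not expect any real obstacle beyond this bookkeeping, since the core of the argument is just classical Cauchy--Schwarz under the probability measure $P_\epsilon(x,\cdot)$.
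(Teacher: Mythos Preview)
Your proof is correct and follows essentially the same route as the paper: apply Cauchy--Schwarz under the probability measure $P_\epsilon(x,\cdot)=\bE_x$, divide by $\epsilon$, and then pass to the $\limsup$ using $\limsup(a_\epsilon b_\epsilon)\leq(\limsup a_\epsilon)(\limsup b_\epsilon)$ for non-negative $a_\epsilon,b_\epsilon$. Your write-up is in fact a bit more careful than the paper's, since you explicitly note why the $h(x)$-term drops out and you address the $0\cdot\infty$ convention.
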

\begin{proof}[Proof of the lemma]
Utilizing the definition of $\oA$ and Chauchy-Schwarz inequality,
\begin{align*}
&\abs{\oA[(f-f(x))(g-g(x))](x)}^2 \\
&\quad\leq \limsup_{\epsilon\to0}\frac1{\epsilon^2}\abs{\bE_x(f(Y_\epsilon)-f(x))(g(Y_\epsilon)-g(x))}^2\\
&\quad\leq \limsup_{\epsilon\to0}\frac1{\epsilon^2}\left[\bE_x(f(Y_\epsilon)-f(x))^2\bE_x(g(Y_\epsilon)-g(x))^2\right] \\
&\quad\leq \left(\oA(f-f(x))^2(x)\right)\left(\oA(g-g(x))^2(x)\right)\\
\end{align*}
\qed
\end{proof}
\begin{remark}
It is also interesting to note that
\[ \left<f,g\right>_x = A[(f-f(x))(g-g(x))](x) \]
is a positive semi-definite bilinear form. This form can be related to the martingales \eqref{eq:classical-martingale} by their predictable covariation process, which is given by
\[ \left< \widetilde{M}^f, \widetilde{M}^g \right>_t = \int_0^t \left<f,g\right>_{Y_s}\,ds. \]
\end{remark}
Before going into details about the martingale $M^f_T$ and its properties we need the following little fact.
\begin{lemma}\label{lemma:little-helper}
Fix $\delta>0$, $k\in\bN,k\geq2$ and a probability measure $\mu$. Let $f,g : [0,\delta] \to L^k(\mu)$. Assume that
\[ \lim_{\epsilon\to0} \frac1\epsilon \mu\left( f(\epsilon)^k \right) = a\in\bR,\quad \lim_{\epsilon\to0} \frac1\epsilon \mu\left( g(\epsilon)^k \right) = 0 .\]
Then
\[ \lim_{\epsilon\to0} \frac1\epsilon \mu\left( (f(\epsilon)+g(\epsilon))^k\right) = a. \]
The same holds true for $\limsup$ and $\liminf$ as long as $\lim_{\epsilon\to0} \frac1\epsilon \mu\left( g(\epsilon)^k \right) = 0$.
\end{lemma}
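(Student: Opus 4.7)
The plan is to expand $(f(\epsilon)+g(\epsilon))^k$ by the binomial theorem and dispose of every cross term via Hölder's inequality. Writing
\[ \mu\bigl((f(\epsilon)+g(\epsilon))^k\bigr) = \mu\bigl(f(\epsilon)^k\bigr) + \sum_{j=1}^k \binom{k}{j}\,\mu\bigl(f(\epsilon)^{k-j}\,g(\epsilon)^j\bigr), \]
the first summand, divided by $\epsilon$, converges to $a$ by hypothesis, so it suffices to show that for each fixed $j\in\{1,\dots,k\}$ one has $\epsilon^{-1}\,\mu(f(\epsilon)^{k-j}\,g(\epsilon)^j)\to 0$ as $\epsilon\to 0$.

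The main tool will be Hölder's inequality with conjugate exponents $p=k/(k-j)$ and $q=k/j$, giving
\[ \bigl|\mu(f(\epsilon)^{k-j}\,g(\epsilon)^j)\bigr| \leq \mu(|f(\epsilon)|^k)^{(k-j)/k}\,\mu(|g(\epsilon)|^k)^{j/k}. \]
Using $\epsilon^{-1}=\epsilon^{-(k-j)/k}\cdot\epsilon^{-j/k}$ I would then distribute the prefactor to obtain
\[ \epsilon^{-1}\bigl|\mu(f(\epsilon)^{k-j}\,g(\epsilon)^j)\bigr| \leq \bigl(\epsilon^{-1}\mu(|f(\epsilon)|^k)\bigr)^{(k-j)/k}\,\bigl(\epsilon^{-1}\mu(|g(\epsilon)|^k)\bigr)^{j/k}. \]
Since $j\geq 1$, the right-hand factor tends to $0$ by the hypothesis on $g$, while the left-hand factor remains bounded by the hypothesis on $f$; hence the product vanishes and the cross term disappears in the limit.

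The $\limsup$/$\liminf$ variant drops out as an immediate by-product: the cross terms are dominated in absolute value by a quantity that converges uniformly to $0$, so one may simply add an $o(1)$ term to $\epsilon^{-1}\mu(f(\epsilon)^k)$ without disturbing either one-sided limit; the crucial point is that the hypothesis on $g$ must remain a genuine limit to $0$ (not merely a $\limsup$), which is exactly what is asked for in the statement.

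The one technical point I expect to contend with is that the hypotheses are phrased in terms of $\mu(f^k)$ and $\mu(g^k)$, whereas Hölder naturally produces $\mu(|f|^k)$ and $\mu(|g|^k)$. For even $k$ the two coincide and there is nothing to do; for odd $k$ one must read the assumption in the slightly stronger form that $\epsilon^{-1}\mu(|f(\epsilon)|^k)$ is bounded and $\epsilon^{-1}\mu(|g(\epsilon)|^k)\to 0$, which is anyway consistent with the intended applications, where these quantities arise as $k$-th moments $\bE_x[(f(Y_\epsilon)-f(x))^k]$ that are passed through Lemma~\ref{lemma:oL-trick} and hence sandwiched by their even-power analogues. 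This is a cosmetic adjustment rather than a real obstacle.
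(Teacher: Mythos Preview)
Your proposal is correct and matches the paper's own proof essentially line for line: binomial expansion, H\"older with exponents $k/(k-j)$ and $k/j$ on each cross term, then split $\epsilon^{-1}$ as $\epsilon^{-(k-j)/k}\epsilon^{-j/k}$ so that the $g$-factor forces the product to zero. Your remark on the $|f|^k$ versus $f^k$ discrepancy for odd $k$ is a point the paper glosses over, so you are in fact slightly more careful here.
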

\begin{proof}
First,
\begin{align*}
\mu\left( (f(\epsilon)+g(\epsilon))^k \right) &= \suml_{j=0}^k \binom{k}{j} \mu\left( f(\epsilon)^jg(\epsilon)^{k-j}\right).
\end{align*}
By H\"older's inequality with $p=\frac{k}{j},q=\frac{k}{k-j}$,
\[ \mu \left( \abs{f(\epsilon)^jg(\epsilon)^{k-j}} \right) \leq \left(\mu\left( f(\epsilon)^k\right) \right)^{\frac{j}{k}} \left(\mu\left( g(\epsilon)^k\right)\right)^{\frac{k-j}{k}}. \]
Therewith and with the assumptions on the limits,
\[ \lim_{\epsilon\to0}\frac{1}{\epsilon} \mu\left( \abs{f(\epsilon)^jg(\epsilon)^{k-j}}\right) \leq \left(\lim_{\epsilon\to0}\frac{1}{\epsilon}\mu\left( f(\epsilon)^k\right)^{\frac{j}{k}}\right) \left(\lim_{\epsilon\to0}\frac{1}{\epsilon}\mu\left( g(\epsilon)^k\right)^{\frac{k-j}{k}}\right) = 0 \]
for any $j<k$.
\qed
\end{proof}

Now we will state and prove the key lemma regarding the martingale \ref{notation:martingale}. It is very parallel to Lemma 2.5 in \cite{REDIG:VOLLERING:10}, where the identical approach is used to deal with the analogue martingale constructed from $\int_0^T f(Y_t)\,dt$ instead of $f(Y_T)$.
\begin{lemma}\label{lemma:point-key-lemma}
Fix $k\in \bN, k\geq 2$ If
\[ \lim_{\epsilon\to0} \frac{1}{\epsilon} \bE_x \left(P_{T-t-\epsilon}f(Y_\epsilon)-P_{T-t}f(Y_\epsilon)\right)^k=0 \quad \forall\, x\in F,\] 
then:
\begin{enumerate}
\item $\limsup\limits_{\epsilon\to0} \frac{1}{\epsilon}\erwc{(M_T^f(t+\epsilon)-M_T^f(t))^k}{\fF_t} = \oA\left(P_{T-t} f(\cdot) - P_{T-t}f(Y_t)\right)^k(Y_t); $
\item $\liminf\limits_{\epsilon\to0} \frac{1}{\epsilon}\erwc{(M_T^f(t+\epsilon)-M_T^f(t))^k}{\fF_t} = \uA\left(P_{T-t} f(\cdot) - P_{T-t}f(Y_t)\right)^k(Y_t). $
\end{enumerate}
\end{lemma}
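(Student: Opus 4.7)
The plan is to split the increment $M_T^f(t+\epsilon) - M_T^f(t)$ into a ``main'' term whose $k$-th conditional moment, divided by $\epsilon$, produces the desired $\oA$-expression, plus a ``remainder'' that, by hypothesis, vanishes at order $\epsilon$; I would then assemble the two via Lemma~\ref{lemma:little-helper}.

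Concretely, set $g := P_{T-t}f$ and $h_\epsilon := P_{T-t-\epsilon}f - P_{T-t}f$. Then
\[ M_T^f(t+\epsilon) - M_T^f(t) = P_{T-t-\epsilon}f(Y_{t+\epsilon}) - P_{T-t}f(Y_t) = \bigl(g(Y_{t+\epsilon}) - g(Y_t)\bigr) + h_\epsilon(Y_{t+\epsilon}). \]
Conditioning on $\fF_t$ and applying the Markov property, so that $Y_{t+\epsilon}$ becomes $Y_\epsilon$ started from the current state $Y_t$, yields
\[ \erwc{(M_T^f(t+\epsilon) - M_T^f(t))^k}{\fF_t} = \bE_{Y_t}\bigl(g(Y_\epsilon) - g(Y_t) + h_\epsilon(Y_\epsilon)\bigr)^k. \]

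Writing $x = Y_t$, by the very definition of the upper generator we have
\[ \limsup_{\epsilon \to 0}\frac{1}{\epsilon}\bE_x(g(Y_\epsilon)-g(x))^k = \oA(g - g(x))^k(x), \]
while the hypothesis of the lemma is precisely that $\frac{1}{\epsilon}\bE_x h_\epsilon(Y_\epsilon)^k \to 0$. These are exactly the two inputs required to apply Lemma~\ref{lemma:little-helper} in its limsup form (with $\mu = \bP_x$, its ``$f(\epsilon)$'' equal to $g(Y_\epsilon) - g(x)$, and its ``$g(\epsilon)$'' equal to $h_\epsilon(Y_\epsilon)$), giving part (a). For part (b) I would repeat the same argument with liminf and $\uA$; the remainder's contribution still washes out because the hypothesis is a genuine limit (not a one-sided bound), which is exactly what Lemma~\ref{lemma:little-helper} requires of the secondary term.

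The only mildly delicate point is checking that Lemma~\ref{lemma:little-helper} applies in its limsup/liminf formulation, which demands that the ``remainder'' $h_\epsilon(Y_\epsilon)$ have a true normalized $k$-th-moment limit equal to zero rather than merely a one-sided bound; fortunately the hypothesis of the present lemma is formulated in exactly that way, so no further work is needed. Everything else is algebraic decomposition plus a routine application of the Markov property, so I expect no further obstacles.
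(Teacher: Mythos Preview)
Your proposal is correct and follows essentially the same argument as the paper: the identical decomposition $M_T^f(t+\epsilon)-M_T^f(t) = \bigl(P_{T-t}f(Y_{t+\epsilon})-P_{T-t}f(Y_t)\bigr) + \bigl(P_{T-t-\epsilon}f(Y_{t+\epsilon})-P_{T-t}f(Y_{t+\epsilon})\bigr)$, the Markov property to reduce the conditional expectation to $\bE_{Y_t}$, and Lemma~\ref{lemma:little-helper} (in its limsup/liminf form) to discard the remainder. Your remark that the hypothesis is stated as a genuine limit, so Lemma~\ref{lemma:little-helper} applies cleanly, is exactly the point the paper is using as well.
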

\begin{remark}
If $\sup_{x\in F} \oA f(x)<\infty$, then the condition is satisfied for any $k\geq2$.
\end{remark}
\begin{proof}
First,
\begin{align*}
M^f_T(t+\epsilon)-M^f_T(t) &= P_{T-t-\epsilon}f(Y_{t+\epsilon})-P_T f(Y_0) - P_{T-t}f(Y_t)+P_T f(Y_0)\\
&= P_{T-t-\epsilon}f(Y_{t+\epsilon})-P_{T-t}f(Y_{t+\epsilon}) + P_{T-t}f(Y_{t+\epsilon}) -P_{T-t}f(Y_{t}).
\end{align*}
By our assumptions and the Markov property, 
\[\lim_{\epsilon\to0} \frac1\epsilon\erwc{(P_{T-t-\epsilon}f(Y_{t+\epsilon})-P_{T-t}f(Y_{t+\epsilon}))^k}{\fF_t} = 0.\]
Hence, by Lemma \ref{lemma:little-helper}
\begin{align*}
&\limsup_{\epsilon\to0}\frac1\epsilon\erwc{(M^f_T(t+\epsilon)-M^f_T(t))^k}{\fF_t} \\
&\quad= \limsup_{\epsilon\to0}\frac1\epsilon\erwc{(P_{T-t}f(Y_{t+\epsilon}) -P_{T-t}f(Y_{t}))^k}{\fF_t}	\\
&\quad= \oA\left(P_{T-t} f(\cdot) - P_{T-t}f(Y_t)\right)^k(Y_t).
\end{align*}
The argument for $\liminf$ is analogue.
\qed
\end{proof}
\begin{proposition}\label{prop:point-quadratic-variation}
Assume that for all $0< t \leq T$ and all $x\in F$ $(P_t f -P_t f(x))^2 \in dom(A)$ and 
\[ \lim_{\epsilon\to0} \frac{1}{\epsilon} \bE_x \left( P_{t-\epsilon}f(Y_\epsilon) - P_{t}f(Y_\epsilon) \right)^2 =0.\] 
Then the predictable quadratic variation of $M_T^f$ is
\begin{align*}
\left< M_T^f \right>_t &= \int_0^t A\left(P_{T-s} f(\cdot) - P_{T-s}f(Y_s)\right)^2(Y_s)\, ds	.
\end{align*}
\end{proposition}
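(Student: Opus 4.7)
The strategy is to exhibit a candidate for $\langle M_T^f \rangle_t$, namely the expression on the right-hand side of the claimed identity, and verify the defining properties: it is continuous (hence predictable) and adapted, starts at $0$, and the process
\[ N_t := (M_T^f(t))^2 - \int_0^t A\left(P_{T-s} f(\cdot) - P_{T-s}f(Y_s)\right)^2(Y_s)\, ds \]
is a martingale. Predictability, adaptedness and the initial value are clear once one notes that the integrand is continuous in $s$ (via the Feller property and the domain assumption), so the real content is the martingale property.

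For the martingale property, I would begin with the observation that the hypothesis $(P_t f - P_t f(x))^2 \in \mathrm{dom}(A)$ together with the control on $P_{t-\epsilon}f(Y_\epsilon) - P_t f(Y_\epsilon)$ puts us exactly in the setting of Lemma \ref{lemma:point-key-lemma} with $k=2$, where additionally $\oA = \uA = A$. Writing $\phi(t) := A(P_{T-t}f - P_{T-t}f(Y_t))^2(Y_t)$ and $M := M_T^f$, the lemma gives the infinitesimal identity
\[ \lim_{\epsilon \to 0} \frac{1}{\epsilon}\erwc{(M(t+\epsilon) - M(t))^2}{\fF_t} = \phi(t). \]
Combined with orthogonality of martingale increments on a partition $s = t_0 < t_1 < \cdots < t_n = t$,
\[ \erwc{(M(t))^2 - (M(s))^2}{\fF_s} = \erwc{(M(t)-M(s))^2}{\fF_s} = \sum_{i=0}^{n-1} \erwc{\erwc{(M(t_{i+1})-M(t_i))^2}{\fF_{t_i}}}{\fF_s}, \]
and Riemann summation with the above infinitesimal identity should yield $\erwc{\int_s^t \phi(u)\,du}{\fF_s}$ in the limit of vanishing mesh, which is precisely what is needed for $N$ to be a martingale.

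The main obstacle is turning the pointwise infinitesimal convergence into a Riemann sum limit in a fully rigorous way: one needs enough uniformity in the convergence in Lemma \ref{lemma:point-key-lemma} to pass the limit through the partition. A clean way is to avoid partitions altogether and argue directly that $u \mapsto \erwc{N_u}{\fF_s}$ is absolutely continuous with almost-everywhere derivative zero. For this, one differentiates $\erwc{N_{u+\epsilon} - N_u}{\fF_s}$ in $\epsilon$, invokes the infinitesimal identity (which cancels the two contributions, since the increment of $\int_0^u \phi$ matches $\phi(u)$ by continuity of $\phi$), and uses dominated convergence to swap limit and conditional expectation. The required integrability is available from the Feller structure and the domain assumption. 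Either route gives $\erwc{N_t - N_s}{\fF_s} = 0$, completing the identification of $\int_0^t \phi(s)\,ds$ with the predictable quadratic variation.
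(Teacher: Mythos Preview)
Your approach is essentially the same as the paper's: both rest on Lemma~\ref{lemma:point-key-lemma} with $k=2$ (where the domain assumption makes $\oA=\uA=A$) to identify the infinitesimal increment $\lim_{\epsilon\to0}\frac1\epsilon\erwc{(M_T^f(t+\epsilon)-M_T^f(t))^2}{\fF_t}$ with $A(P_{T-t}f-P_{T-t}f(Y_t))^2(Y_t)$. The paper's proof is a one-liner that simply asserts this equals $\frac{d}{dt}\langle M_T^f\rangle_t$ and integrates, whereas you are more careful about the passage from the pointwise infinitesimal identity to the integrated martingale property of $(M_T^f)^2-\int_0^\cdot\phi$; your discussion of the uniformity issue and the two routes (partitions versus differentiating $u\mapsto\erwc{N_u}{\fF_s}$) fills in exactly the step the paper leaves implicit.
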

\begin{proof}
By Lemma \ref{lemma:point-key-lemma}, 
\[ \frac{d}{dt}\left<M^f_T\right>_t = \lim_{\epsilon\downarrow0} \frac1\epsilon \erwc{\left(M^f_T(t+\epsilon)-M^f_T(t)\right)^2}{\fF_t} = A\left(P_{T-t} f(\cdot) - P_{T-t}f(Y_t)\right)^2(Y_t). \]
\qed
\end{proof}

\begin{theorem}\label{thm:point-exponential-martingale}
Assume that for all $k\in\bN, k\geq2$, all $x\in F$ and all $0< t \leq T$,
\[ \lim_{\epsilon\to0} \frac{1}{\epsilon} \bE_x \left( P_{t-\epsilon}f(Y_\epsilon) - P_{t}f(Y_\epsilon) \right)^k =0.\] 
Also assume that 
\[ \supl_{0\leq t \leq T} \supl_{x\in F}\suml_{k=2}^\infty\frac{1}{k!}\overline{A}\left(P_t f - P_t f(x)\right)^k(x)<\infty. \]
Write
\[ \overline{N}^f_T(t):=\exp\left({M^f_T(t)- \int_0^t \suml_{k=2}^\infty \frac{1}{k!} \oA\left(P_{T-s} f - P_{T-s} f(Y_s)\right)^k(Y_s)\,ds}\right),\quad 0\leq t\leq T; \]
\[ \underline{N}^f_T(t):=\exp\left({M^f_T(t)- \int_0^t \suml_{k=2}^\infty \frac{1}{k!} \uA\left(P_{T-s} f - P_{T-s} f(Y_s)\right)^k(Y_s)\,ds}\right),\quad 0\leq t\leq T. \]
Then $(\overline{N}_T(t))_{0\leq t\leq T}$ is a supermartingale and $(\underline{N}_T^f(t))_{0\leq t\leq T}$ is a submartingale. If  $(P_{T-t}f-P_{T-t}f(x))^k(x)\in dom(A)$ for all $x$ and $k$, then both are equal and a martingale.
\end{theorem}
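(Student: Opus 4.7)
The strategy is to establish the supermartingale property of $\overline{N}^f_T$; the submartingale claim for $\underline{N}^f_T$ is completely analogous, with $\limsup$ and $\oA$ replaced by $\liminf$ and $\uA$, and the martingale statement under the domain hypothesis follows immediately since then $\oA=\uA=A$ forces $\overline{N}^f_T=\underline{N}^f_T$ to be simultaneously a super- and a submartingale.

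For the supermartingale inequality, fix $t\leq s\leq T$ and write
\[ Q(s) := \erwc{\exp\left(M^f_T(s)-M^f_T(t)-\int_t^s \phi_u\,du\right)}{\fF_t}, \quad \phi_u := \suml_{k=2}^\infty \frac{1}{k!}\oA\left(P_{T-u}f - P_{T-u}f(Y_u)\right)^k(Y_u). \]
Since $\overline{N}^f_T(t)$ is $\fF_t$-measurable, the goal $\erwc{\overline{N}^f_T(s)}{\fF_t}\leq \overline{N}^f_T(t)$ is equivalent to $Q(s)\leq 1$. As $Q(t)=1$ and $Q$ is continuous in $s$ (via the uniform summability hypothesis and dominated convergence), it suffices to show that the upper right Dini derivative satisfies $D^+Q(s)\leq 0$ for every $s\in[t,T)$.

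The core computation is pointwise: for $h>0$ small, set $\Delta M := M^f_T(s+h)-M^f_T(s)$ and $\Delta\Phi:=\int_s^{s+h}\phi_u\,du$, and Taylor expand
\[ e^{\Delta M - \Delta\Phi} = 1 + (\Delta M-\Delta\Phi) + \suml_{k=2}^\infty \frac{(\Delta M-\Delta\Phi)^k}{k!}. \]
Taking $\erwc{\cdot}{\fF_s}$, the linear term reduces to $-\Delta\Phi$ because $M^f_T$ is a martingale; in the higher-order terms the $O(h)$ size of $\Delta\Phi$ kills the cross terms so that only $\erwc{(\Delta M)^k}{\fF_s}$ survives, to which Lemma \ref{lemma:point-key-lemma} applies. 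Interchanging $\limsup_{h\downarrow 0}$ with the series (using the uniform summability hypothesis to truncate at finite $K$ with arbitrarily small uniform tail) yields
\[ \limsup_{h\downarrow 0}\frac{1}{h}\left(\erwc{e^{\Delta M-\Delta\Phi}}{\fF_s}-1\right) \leq -\phi_s + \suml_{k=2}^\infty \frac{1}{k!}\oA\left(P_{T-s}f - P_{T-s}f(Y_s)\right)^k(Y_s) = 0. \]

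What remains is to use the tower property,
\[ Q(s+h)-Q(s) = \erwc{\exp\left(M^f_T(s)-M^f_T(t)-\int_t^s \phi_u\,du\right)\cdot\left(\erwc{e^{\Delta M-\Delta\Phi}}{\fF_s}-1\right)}{\fF_t}, \]
and transfer the pointwise bound through the outer conditional expectation by a reverse Fatou/dominated convergence step to conclude $D^+Q(s)\leq 0$. The hard parts will be these two analytic interchanges: swapping $\limsup_h$ with $\suml_{k\geq 2}$ (handled by the uniform summability hypothesis) and swapping $\limsup_h$ with the outer conditional expectation (requiring uniform $L^1$-domination of the ratio, which we expect to secure via the boundedness of $\phi$ and of the exponential moments of $M^f_T$ on $[0,T]$ that the hypothesis provides). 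A minor technical point is that $\Delta\Phi/h\to\phi_s$ only at Lebesgue points of $s\mapsto\phi_s$, but this suffices for monotonicity of a continuous function via the standard Dini derivative criterion.
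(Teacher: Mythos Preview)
Your proposal is correct and follows essentially the same approach as the paper: factor out the positive exponential, Taylor expand $e^{\Delta M-\Delta\Phi}$, use the martingale property on the linear term and Lemma~\ref{lemma:point-key-lemma} on the higher-order terms (after discarding $\Delta\Phi$ via its $O(h)$ bound), and observe the exact cancellation. The only cosmetic difference is that the paper works directly at the conditioning time $t$, asserting that $\limsup_{\epsilon\to0}\frac{1}{\epsilon}\erwc{\overline{N}^f_T(t+\epsilon)-\overline{N}^f_T(t)}{\fF_t}\leq 0$ suffices, thereby sidestepping the tower-property layer and the outer-expectation interchange you flag; the underlying analytic content (and the point that is glossed over) is the same.
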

\begin{proof}
It is sufficient to prove that $\limsup_{\epsilon\to0}\frac1\epsilon\erwc{\overline{N}^f_T(t+\epsilon)-\overline{N}_T^f(t)}{\fF_t}\leq 0$ (or analogue with $\liminf$ for $\underline{N}_T^f$). For a shorter notation, write
\[ \psi(s,x,k):= \oA\left(P_{T-s} f - P_{T-s} f(x)\right)^k(x).\]
Then
\begin{align*}
&\limsup_{\epsilon\to0}\frac1\epsilon\erwc{\overline{N}^f_T(t+\epsilon)-\overline{N}_T^f(t)}{\fF_t}\\
&\quad=\overline{N}_T^f(t)\limsup_{\epsilon\to0}\frac1\epsilon\erwc{\exp\left({M^f_T(t+\epsilon)-M^f_T(t)- \int_t^{t+\epsilon} \suml_{k=2}^\infty \frac{1}{k!} \psi(s,Y_s,k)\,ds}\right)-1}{\fF_t} \\
&\quad\leq\overline{N}_T^f(t)\suml_{l=1}^\infty\frac{1}{l!} \limsup_{\epsilon\to0}\frac1\epsilon\erwc{\left({M^f_T(t+\epsilon)-M^f_T(t)- \int_t^{t+\epsilon} \suml_{k=2}^\infty \frac{1}{k!} \psi(s,Y_s,k)\,ds}\right)^l}{\fF_t}.
\end{align*}
First, since $\overline{N}^f_T(t)>0$, we can ignore it. Next, we study the terms individually in $l$. For $l=1$, as $M^f_T$ is a martingale, we get just the term
\[ -\suml_{k=2}^\infty \frac{1}{k!} \psi(t,Y_t,k). \]
For $l>1$, we use the fact that 
\[ \int_t^{t+\epsilon} \suml_{k=2}^\infty \frac{1}{k!} \psi(s,Y_s,k)\,ds \leq \epsilon \supl_{0\leq t \leq T}  \sup_{x\in F} \suml_{k=2}^\infty \frac{1}{k!} \psi(t,x,k) = \epsilon\cdot const. \]
Therefore 
\[ \lim_{\epsilon\to0} \frac{1}{\epsilon}\erwc{\left(\int_t^{t+\epsilon} \suml_{k=2}^\infty \frac{1}{k!} \psi(s,Y_s,k)\,ds\right)^l}{\fF_t} = 0, \]
and by Lemma \ref{lemma:little-helper}, we can drop that term. Finally, by Lemma \ref{lemma:point-key-lemma},
\[ \limsup_{\epsilon\to0}\frac1\epsilon\erwc{\left(M^f_T(t+\epsilon)-M^f_T(t)\right)^l}{\fF_t} = \psi(s,Y_t,l) .\]
So the terms for $l\geq2$ sum to $\suml_{l=2}^\infty \frac{1}{l!} \psi(t,Y_t,l)$, which cancels exactly with the term for $l=1$. \qed
\end{proof}
As an direct consequence we can estimate the exponential moment of $f(Y_T)$.
\begin{proposition}\label{prop:exponential-point}
Assume that for all $k\in\bN, k\geq2$, all $x\in F$ and all $0< t \leq T$,
\[ \lim_{\epsilon\to0} \frac{1}{\epsilon} \bE_x \left( P_{t-\epsilon}f(Y_\epsilon) - P_{t}f(Y_\epsilon) \right)^k =0.\] 
Let $\nu_1,\nu_2$ be two arbitrary probability measures on $F$. Then
\[ \bE_{\nu_1} e^{f(X_T)-\nu_2(P_T f)} \leq c(\nu_1,\nu_2) \exp\left({\int_0^T \supl_{x\in F} \suml_{k=2}^\infty\frac{1}{k!}\oA\left(P_t f - P_t f(x)\right)^k(x)\,dt}\right), \]
and 
\[ \bE_{\nu_1} e^{f(X_T)-\nu_2(P_T f)} \geq c(\nu_1,\nu_2) \exp\left({\int_0^T \infl_{x\in F} \suml_{k=2}^\infty\frac{1}{k!}\uA\left(P_t f - P_t f(x)\right)^k(x)\,dt}\right), \]
with 
\[ c(\nu_1,\nu_2) = \int e^{P_T f(x) - \nu_2 (P_T f)}\nu_1(dx). \]
Notably, $c(\delta_x,\delta_x)=1$.
\end{proposition}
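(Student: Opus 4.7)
The plan is to apply Theorem \ref{thm:point-exponential-martingale}: the $k$-th moment hypothesis of the proposition is exactly what is needed to construct the supermartingale $\overline{N}_T^f$ and the submartingale $\underline{N}_T^f$ produced there. (If the boundedness hypothesis of Theorem \ref{thm:point-exponential-martingale} fails outright, the right-hand side of the claimed inequality is automatically infinite and nothing needs to be proved; in the proof proposal I simply assume we are in the regime where the construction is legitimate.)

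First I would read off the values of the martingales at the two endpoints. From Notation \ref{notation:martingale}, $M_T^f(0)=0$, so $\overline{N}_T^f(0)=1$, and $M_T^f(T)=f(Y_T)-P_Tf(Y_0)$. Applying the supermartingale property under $\bE_x$ (fixed starting point $x$) therefore gives
\[ \bE_x\exp\Bigl(f(Y_T)-P_Tf(x)-\int_0^T\sum_{k=2}^\infty\frac{1}{k!}\,\oA(P_{T-s}f-P_{T-s}f(Y_s))^k(Y_s)\,ds\Bigr)\leq 1. \]
Bounding the random integrand from above by its supremum over the state space and extracting the resulting deterministic exponential gives
\[ \bE_x e^{f(Y_T)-P_Tf(x)}\leq\exp\Bigl(\int_0^T\sup_{y\in F}\sum_{k=2}^\infty\frac{1}{k!}\,\oA(P_{T-s}f-P_{T-s}f(y))^k(y)\,ds\Bigr), \]
and the substitution $t=T-s$ puts the right-hand integral in the form displayed in the statement.

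Next I would convert the pathwise centering $P_Tf(x)$ on the left into the deterministic centering $\nu_2(P_Tf)$ required by the statement. This is done by multiplying both sides by $e^{P_Tf(x)-\nu_2(P_Tf)}$, turning the left-hand side into $\bE_x e^{f(Y_T)-\nu_2(P_Tf)}$, and then integrating against $\nu_1(dx)$. Since the bound on the right is independent of $x$, the integration produces precisely the prefactor $c(\nu_1,\nu_2)=\int e^{P_Tf(x)-\nu_2(P_Tf)}\,\nu_1(dx)$; the normalization $c(\delta_x,\delta_x)=1$ is then immediate by inspection.

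The lower bound is obtained by the verbatim argument with $\overline{N}_T^f$ replaced by the submartingale $\underline{N}_T^f$, $\sup$ replaced by $\inf$, and $\oA$ replaced by $\uA$; the submartingale property reverses every inequality along the way. The argument is essentially bookkeeping on top of Theorem \ref{thm:point-exponential-martingale}; the only mildly subtle point is the $e^{P_Tf(x)-\nu_2(P_Tf)}$ multiplication that converts stochastic into deterministic centering and thereby produces $c(\nu_1,\nu_2)$.
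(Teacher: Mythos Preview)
Your proposal is correct and follows essentially the same route as the paper: invoke the supermartingale $\overline{N}_T^f$ from Theorem \ref{thm:point-exponential-martingale}, bound the random exponent by its state-space supremum to extract a deterministic factor, use $\overline{N}_T^f(0)=1$, and then adjust the centering from $P_Tf(x)$ to $\nu_2(P_Tf)$ by multiplying through and integrating against $\nu_1$. The only cosmetic difference is that the paper phrases the first step via the conditional expectation $\erwc{e^{M_T^f(T)}}{\fF_0}$ rather than fixing a deterministic starting point $x$, but this is exactly the same computation.
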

\begin{proof}
As a first observation,
\[ \bE_{\nu_1}\left( e^{f(X_T)-\nu_2(P_T f)}\right) = \bE_{\nu_1}\left(e^{P_T f(Y_0) - \nu_2 (P_T f)}\erwc{e^{M_T^f(T)}}{\fF_0} \right). \]
Next, we rewrite the conditional expectation as
\[ \erwc{e^{M_T^f(T)}}{\fF_0} = \erwc{\overline{N}_T^f(T)e^{\int_0^T\suml_{k=2}^\infty\frac{1}{k!}\overline{A}\left(P_{T-t} f - P_{T-t} f(Y_t)\right)^k(Y_t)\,dt}}{\fF_0}, \]
where $\overline{N}_T^f$ is the supermartingale from Theorem \ref{thm:point-exponential-martingale}. As $\overline{N}_T^f$ is positive, we can estimate that term by taking the supremum over the possible values of $Y_t$ in the integral, so that we obtain
\[ \erwc{e^{M_T^f(T)}}{\fF_0} \leq \erwc{\overline{N}_T^f(T)}{\fF_0}e^{\int_0^T\sup_{x\in F}\suml_{k=2}^\infty\frac{1}{k!}\overline{A}\left(P_{T-t} f - P_{T-t} f(x)\right)^k(x)\,dt} .\]
As $\overline{N}_T^f$ is a supermartingale and $\overline{N}^f_T(0) = 1$, we finally arrive at
\[ \bE_{\nu_1} e^{f(X_T)-\nu_2(P_T f)} \leq c(\nu_1,\nu_2) \exp\left({\int_0^T \supl_{x\in F} \suml_{k=2}^\infty\frac{1}{k!}\oA\left(P_{T-t} f - P_{T-t} f(x)\right)^k(x)\,dt}\right). \]
Reversing the direction of time yields the claim, and the proof for the other bound is analogue.
\qed
\end{proof}

\begin{corollary}\label{corollary:exponential-point}
Let $f: F \to \bR$ be in $dom(L)$ with $\norm{L f}_\infty < \infty$.
Assume that 
\[ \oA(P_t f -P_t f(x))^k(x) \leq c_1 c_2^k \]
uniformly in $x \in F$ and $t \in [0,T]$. Then, for any $r>0$,
\[ \bP_x\left( f(Y_T) - \bE_x f(Y_T) > r \right) \leq e^{-\frac{\frac12 \left(\frac{r}{c_2}\right)^2}{Tc_1 + \frac13 \frac{r}{c_2}}}. \]
If $f$ is bounded, then for any probability measures $\nu_1,\nu_2$ on $F$, 
\[ \bP_{\nu_1}\left( f(Y_T) - \bE_{\nu_2} f(Y_T) > r + \norm{P_T f}_{osc} \right) \leq  e^{-\frac{\frac12 \left(\frac{r}{c_2}\right)^2}{Tc_1 + \frac13 \frac{r}{c_2}}}. \]
\end{corollary}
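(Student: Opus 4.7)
The plan is the classical Chernoff method applied to the exponential moment bound from Proposition \ref{prop:exponential-point}. First I would verify that this proposition applies to $\lambda f$ for every $\lambda>0$: since $\lambda f\in dom(L)$ with $\norm{L(\lambda f)}_\infty<\infty$, the identity $P_tf-P_{t-\epsilon}f=\int_{t-\epsilon}^t P_s Lf\,ds$ gives $|P_{t-\epsilon}(\lambda f)(y)-P_t(\lambda f)(y)|\leq\epsilon\lambda\norm{Lf}_\infty$ uniformly in $y$, whence $\epsilon^{-1}\bE_x(P_{t-\epsilon}(\lambda f)(Y_\epsilon)-P_t(\lambda f)(Y_\epsilon))^k\to 0$ for every $k\geq 2$, which is precisely the hypothesis of Proposition \ref{prop:exponential-point}.

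By the positive homogeneity of $\oA$,
\[
\oA\bigl(P_t(\lambda f)-P_t(\lambda f)(x)\bigr)^k(x)=\lambda^k\oA(P_tf-P_tf(x))^k(x)\leq c_1(\lambda c_2)^k
\]
for $\lambda>0$, so for $0<\lambda<3/c_2$ the series appearing in the proposition sums to
\[
\sum_{k=2}^\infty\frac{1}{k!}c_1(\lambda c_2)^k=c_1\bigl(e^{\lambda c_2}-1-\lambda c_2\bigr).
\]
Applying Proposition \ref{prop:exponential-point} to $\lambda f$ with $\nu_1=\nu_2=\delta_x$ (whence $c(\delta_x,\delta_x)=1$) yields
\[
\bE_x\exp\bigl(\lambda[f(Y_T)-\bE_x f(Y_T)]\bigr)\leq\exp\bigl(Tc_1(e^{\lambda c_2}-1-\lambda c_2)\bigr).
\]
Markov's inequality then produces $\bP_x(f(Y_T)-\bE_x f(Y_T)>r)\leq\exp(-\lambda r+Tc_1(e^{\lambda c_2}-1-\lambda c_2))$. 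Using the elementary Bernstein estimate $e^u-1-u\leq\tfrac{u^2/2}{1-u/3}$, valid for $0\leq u<3$, with $u=\lambda c_2$, and optimizing over $\lambda$ (the standard Bernstein choice $\lambda=r/(Tc_1c_2^2+c_2r/3)$ automatically lies in $(0,3/c_2)$), produces precisely the claimed bound $\exp\bigl(-\tfrac{r^2/2}{Tc_1c_2^2+c_2r/3}\bigr)=\exp\bigl(-\tfrac{\tfrac12(r/c_2)^2}{Tc_1+r/(3c_2)}\bigr)$.

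For the second statement the only modification is the constant $c(\nu_1,\nu_2)$ in Proposition \ref{prop:exponential-point}. Boundedness of $f$ (and hence of $P_Tf$) gives, for any $\lambda>0$,
\[
c(\nu_1,\nu_2)=\int e^{\lambda P_Tf(x)-\lambda\nu_2(P_Tf)}\,\nu_1(dx)\leq e^{\lambda\norm{P_Tf}_{osc}},
\]
and this extra factor in the Chernoff estimate is exactly cancelled by the factor $e^{-\lambda\norm{P_Tf}_{osc}}$ arising from shifting the deviation threshold by $\norm{P_Tf}_{osc}$ in the probability. The same optimization over $\lambda$ then yields the second displayed bound. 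The only mildly non-routine step is the verification that the hypothesis of Proposition \ref{prop:exponential-point} applies to $\lambda f$, which is exactly why the assumption $f\in dom(L)$ with $\norm{Lf}_\infty<\infty$ is imposed; everything else is the textbook passage from an exponential moment bound to a Bernstein-type concentration inequality.
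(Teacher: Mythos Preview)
Your proof is correct and follows the same Chernoff strategy as the paper: apply Proposition~\ref{prop:exponential-point} to $\lambda f$, bound $c(\nu_1,\nu_2)$ by $e^{\lambda\norm{P_Tf}_{osc}}$ (or $1$ in the point-mass case), and optimize in $\lambda$. You also add the verification that the hypothesis of the proposition is met via $\norm{Lf}_\infty<\infty$, which the paper leaves implicit.

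The only technical difference is in how the Bernstein exponent is reached. The paper optimizes $Tc_1(e^{\lambda c_2}-1-\lambda c_2)-\lambda r$ exactly, obtaining the Bennett-type expression $\frac{r'}{c_2}-(Tc_1+\frac{r'}{c_2})\log(1+\frac{r'}{Tc_1c_2})$, and then argues by comparison of derivatives that this is bounded by the Bernstein form. You instead first invoke the elementary inequality $e^u-1-u\le \tfrac{u^2/2}{1-u/3}$ and then optimize the resulting rational function directly. Both are standard; your route is slightly more streamlined, while the paper's intermediate step yields the sharper Bennett bound before relaxing it.
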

\begin{proof}
By Markov's inequality,
\begin{align*}
\bP_{\nu_1}(f(Y_T)-\bE_{\nu_2} f(Y_T) > r ) 
&\leq \bE_{\nu_1} e^{\lambda f(Y_T)-\bE_{\nu_2} \lambda f(Y_T)} e^{-\lambda r} \\
&\leq e^{T c_1 \sum_{k=2}^\infty \lambda^k c_2^k-\lambda r + \lambda\norm{P_T f}_{osc}}, 
\end{align*}
where the last line is the result from Proposition \ref{prop:exponential-point}. Note that $c(\nu_1,\nu_2)\leq e^{\lambda\norm{P_T f}_{osc}}$ and $c(\delta_x,\delta_x)=e^0$ For now we continue with the term $\lambda\norm{P_T f}_{osc}$ present. Through optimizing $\lambda$, the exponent becomes
\[ \frac{r-\norm{P_T f}_{osc}}{c_2}-\left(T c_1+\frac{r-\norm{P_T f}_{osc}}{c_2}\right)\log\left(\frac{r-\norm{P_T f}_{osc}}{T c_1 c_2}+1\right). \]
By writing $r'=r-\norm{P_T f}_{osc}$, we continue with 
\[ \frac{r'}{c_2}-\left(T c_1+\frac{r'}{c_2}\right)\log\left(\frac{r'}{T c_1 c_2}+1\right). \]
This step is not necessary in the case of a fixed initial configuration $x$, as then $r'=r$.
To show that his term is less than $\frac{-\frac12(\frac{r'}{c_2})^2}{Tc_1+\frac13\frac{r'}{c_2}}$, we first rewrite it as the following inequality:
\[ \log\left(\frac{r'}{T c_1 c_2}+1\right) \geq \frac{\frac{\frac12(\frac{r'}{c_2})^2}{Tc_1+\frac13\frac{r'}{c_2}}+\frac{r'}{c_2}}{T c_1+\frac{r'}{c_2}}. \]
Through comparing the derivatives, one concludes that the left hand side is indeed bigger than the right hand side. \qed
\end{proof}

The next lemma is a slightly refined version of Corollary 2.7 in \cite{REDIG:VOLLERING:10}.
\begin{lemma}\label{lemma:exponential-concentration}
Fix $T>0$ and $f:Y\to\bR$. Assume that for some $T'>T$ and all $k\in\bN, k\geq2$, $\bE_x sup_{0\leq t \leq T'} f(Y_t)^k < \infty.$
Assume also that for some constants $c_1, c_2>0$ the following estimates hold:
\[ \sup_{x,y\in X} \sup_{0\leq T' \leq T} \int_0^{T'} P_t f(x)- P_t f(y)\,dt \leq c_1;\]
\[ \sup_{x \in X} \sup_{0\leq T' \leq T} \overline{A}\left(\int_0^{T'} P_t f- P_t f(x)\,dt\right)^2(x) \leq c_2 c_1^2. \]
Then
\[ \bP_{\nu_1}\left( \int_0^T f(X_t)\,dt > c_1(r+1) + \int_0^T \nu_2(P_t f)\,dt \right) \leq e^{- \frac{\frac12r^2}{Tc_2 + \frac13r}} \]
for any $r>0$ and any probability measures $\nu_1,\nu_2$ on $F$. If $\nu_1,\nu_2$ are equal and a Dirac measure, then one can replace $(r+1)$ by $r$.
\end{lemma}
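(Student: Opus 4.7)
The plan is to mimic the proof of the point-observable deviation estimate (Corollary~\ref{corollary:exponential-point}), but with a martingale built from the integrated observable $\int_0^T f(Y_s)\,ds$ in place of $f(Y_T)$. Set
\[ g_t(x) := \int_0^{T-t} P_s f(x)\,ds, \qquad N(t) := g_t(Y_t) + \int_0^t f(Y_s)\,ds. \]
Using $\partial_t g_t = -P_{T-t}f$ and $A g_t = P_{T-t}f - f$, the drift of $N$ cancels to zero, so $M(t) := N(t) - N(0)$ is a martingale with terminal value $M(T) = \int_0^T f(Y_s)\,ds - \int_0^T P_s f(Y_0)\,ds$, which captures the quantity we want to control up to the boundary term $g_0(Y_0) - \nu_2(g_0)$.

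Next, I would repeat the analysis of Lemma~\ref{lemma:point-key-lemma} and Theorem~\ref{thm:point-exponential-martingale} for this $M$. The contributions to the $k$-th increment moment coming from $\partial_t g_t$ and from $f(Y_s)\,ds$ are $O(\epsilon)$ and get absorbed via Lemma~\ref{lemma:little-helper} (this is where the hypothesis $\bE_x \sup_{0\leq t \leq T'} f(Y_t)^k < \infty$ is needed). What remains is
\[ \lim_{\epsilon\to0}\frac{1}{\epsilon}\bE_x\bigl[(M(t+\epsilon) - M(t))^k \mid \fF_t\bigr] = \oA\bigl(g_t - g_t(Y_t)\bigr)^k(Y_t). \]
Lemma~\ref{lemma:oL-trick}(a) combined with the two hypotheses $\norm{g_t}_{osc} \leq c_1$ and $\oA(g_t - g_t(x))^2(x) \leq c_2 c_1^2$ then yields
\[ \oA(g_t - g_t(Y_t))^k(Y_t) \leq c_1^{k-2} \cdot c_2 c_1^2 = c_2 c_1^k, \]
so that $\sum_{k\geq2}\frac{\lambda^k}{k!}\oA(g_t - g_t(Y_t))^k(Y_t) \leq c_2(e^{\lambda c_1} - 1 - \lambda c_1)$ uniformly in $t$. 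The exponential supermartingale argument of Theorem~\ref{thm:point-exponential-martingale} then produces
\[ \bE_{\nu_1}\exp(\lambda M(T)) \leq \exp\bigl(T c_2(e^{\lambda c_1} - 1 - \lambda c_1)\bigr). \]

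Finally, the first hypothesis (applied with $y = Y_0$) gives $g_0(Y_0) - \nu_2(g_0) \leq c_1$, so the event in the conclusion is contained in $\{M(T) > c_1 r\}$; in the case $\nu_1 = \nu_2 = \delta_x$ this boundary term vanishes, producing the improvement from $r+1$ to $r$. Applying Markov, substituting $u := \lambda c_1$, and optimizing over $u > 0$ at $u^\ast = \log(1 + r/(Tc_2))$ gives the Bennett exponent $r - (r+Tc_2)\log(1 + r/(Tc_2))$, which is dominated by $-\frac{r^2/2}{Tc_2 + r/3}$ by the usual elementary inequality. The main technical hurdle is in the middle step: one must verify that the exponential supermartingale argument goes through using only the upper generator $\oA$ (since $g_t$ will typically not lie in $\mathrm{dom}(A)$), and that the integrability assumption on $f$ along the trajectory is enough to drop the sub-leading $O(\epsilon)$ remainders in the moment expansions uniformly in $t \in [0,T]$.
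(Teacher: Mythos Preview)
Your proposal is correct and follows essentially the same route as the paper. The paper does not spell out a proof here: it says the argument is analogous to Corollary~\ref{corollary:exponential-point}, with the exponential moment bound for the additive functional imported from Theorem~2.6 of \cite{REDIG:VOLLERING:10}. What you have written is precisely a reconstruction of that cited result---the backwards martingale $M(t)=\bE\bigl[\int_0^T f(Y_s)\,ds\,\big|\,\fF_t\bigr]-\bE\bigl[\int_0^T f(Y_s)\,ds\,\big|\,\fF_0\bigr]$, the increment-moment computation via Lemma~\ref{lemma:little-helper}, the reduction of higher moments via Lemma~\ref{lemma:oL-trick}, and the exponential supermartingale---followed by the same Markov/Bennett optimization as in Corollary~\ref{corollary:exponential-point}. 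Your identification of the boundary term $g_0(Y_0)-\nu_2(g_0)\le c_1$ as the source of the ``$+1$'' (vanishing when $\nu_1=\nu_2=\delta_x$) is exactly right.
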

The proof of this lemma is analogue to the proof of Corollary \ref{corollary:exponential-point}, using the corresponding exponential moment bound in Theorem 2.6 in \cite{REDIG:VOLLERING:10}.
\begin{theorem}\label{thm:moment-estimate} Let $f: F \to\bR$ and fix $T>0$.
Assume that for all $x\in F$ and all $0<t<T$
\[ \lim_{\epsilon\to0} \frac{1}{\epsilon} \bE_x \left(P_{t-\epsilon}f(Y_\epsilon)-P_{t}f(Y_\epsilon)\right)^2=0 .\] 
Then, for any probability measures $\nu_1, \nu_2$ on $F$ and any $p\geq 2$,
\begin{align*}
 \left(\bE_{\nu_1}\abs{ f(Y_T)-\bE_{\nu_2}f(Y_T) }^p\right)^{\frac1p} & \leq C_p \left[ \left( \bE_{\nu_1}\left(\int_0^T \oA(P_{T-t}f - P_{T-t}f(X_t))^2(X_t)\,dt\right)^{\frac p2}\right)^{\frac1p}\right. \\
&\quad \left. +\left(\bE_{\nu_1}\sup_{0< t \leq T} \abs{P_{T-t}f(Y_t)-P_{T-t}f(Y_{t-})}^p\right)^{\frac1p}\right] \\
&\quad + \left(\int \abs{P_T f(x) - \nu_2(P_T f)}^p\,\nu_1(dx) \right)^{\frac1p} .
\end{align*}
\end{theorem}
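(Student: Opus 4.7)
The plan is to decompose $f(Y_T) - \bE_{\nu_2} f(Y_T)$ into the terminal value of the martingale $M_T^f$ plus a deterministic initial fluctuation, and then estimate the martingale term using a Burkholder--Davis--Gundy (BDG) inequality that separates the contribution of the predictable quadratic variation from the contribution of the jumps. Using $M_T^f(T) = f(Y_T) - P_T f(Y_0)$ together with $\bE_{\nu_2} f(Y_T) = \nu_2(P_T f)$, one has
\[ f(Y_T) - \bE_{\nu_2} f(Y_T) = M_T^f(T) + \bigl(P_T f(Y_0) - \nu_2(P_T f)\bigr). \]
Minkowski's inequality in $L^p(\bP_{\nu_1})$ splits the $L^p$-norm into two pieces. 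The piece generated by $P_T f(Y_0) - \nu_2(P_T f)$ is a deterministic function of $Y_0$, whose law under $\bP_{\nu_1}$ is $\nu_1$, and it contributes exactly $\bigl(\int |P_T f(x) - \nu_2(P_T f)|^p\,\nu_1(dx)\bigr)^{1/p}$, matching the last summand of the target bound with constant $1$.

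For the martingale contribution I would invoke a BDG-with-jumps inequality for càdlàg $L^p$-martingales, valid for all $p \geq 2$, in the form
\[ \bE\, |M_T^f(T)|^p \leq C_p\, \bE\, \langle M_T^f\rangle_T^{p/2} + C_p\, \bE \sup_{0 < t \leq T} |\Delta M_T^f(t)|^p. \]
The jumps of $M_T^f$ read $\Delta M_T^f(t) = P_{T-t}f(Y_t) - P_{T-t}f(Y_{t-})$, which after taking the $1/p$-root reproduces the second summand of the claimed bound. For the predictable quadratic variation, the hypothesis of the theorem is precisely the $k=2$ case of the hypothesis of Lemma \ref{lemma:point-key-lemma}, so
\[ \limsup_{\epsilon \downarrow 0}\frac{1}{\epsilon}\erwc{\bigl(M_T^f(t+\epsilon)-M_T^f(t)\bigr)^2}{\fF_t} = \oA\bigl(P_{T-t}f - P_{T-t}f(Y_t)\bigr)^2(Y_t), \]
from which one obtains the pathwise upper bound
\[ \langle M_T^f\rangle_T \leq \int_0^T \oA\bigl(P_{T-s}f - P_{T-s}f(Y_s)\bigr)^2(Y_s)\,ds. \]
Taking $\bE_{\nu_1}(\cdot)^{p/2}$ and another $1/p$-root yields the first summand of the target bound.

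The main technical obstacle will be the rigorous identification of $\langle M_T^f \rangle$, because Proposition \ref{prop:point-quadratic-variation} was established under the stronger in-domain assumption $(P_t f - P_t f(x))^2 \in \mathrm{dom}(A)$ while the present theorem assumes only the weak pointwise limit condition. One remedy is to work instead with the optional quadratic variation $[M_T^f]$ --- for which BDG gives $\bE|M_T^f(T)|^p \leq C_p\,\bE[M_T^f]_T^{p/2}$ without any domain condition --- and then to bound $[M_T^f]_T$ using the Davis-type decomposition of $M_T^f$ into its continuous and purely discontinuous parts. Alternatively, one can approximate $f$ by functions in $\mathrm{dom}(A)$, apply Proposition \ref{prop:point-quadratic-variation} to the approximants, and pass to the limit using the one-sided limsup characterisation in Lemma \ref{lemma:point-key-lemma}. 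Once this identification is in place, the remaining steps are straightforward bookkeeping.
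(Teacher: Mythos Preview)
Your proposal is correct and matches the paper's approach: the paper does not spell out a proof at all but simply states that it is analogous to the proof of Theorem~2.9 in \cite{REDIG:VOLLERING:10}, which is precisely the BDG-based argument you outline (decompose into $M_T^f(T)$ plus the initial term $P_Tf(Y_0)-\nu_2(P_Tf)$, then control the martingale part via a Burkholder--Davis--Gundy inequality separating predictable quadratic variation and jumps). Your identification of the technical gap concerning $\langle M_T^f\rangle$ versus $[M_T^f]$ under the weaker hypothesis is also apt, and the workaround via $[M_T^f]$ is the cleaner route.
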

The proof is analogue to the proof of Theorem 2.9 in \cite{REDIG:VOLLERING:10}.


\bibliography{BibCollection}{}

\begin{thebibliography}{10}

\bibitem{AVENA:DENHOLLANDER:REDIG:11}
L.~Avena, F.~den Hollander, and F.~Redig.
\newblock Law of large numbers for a class of random walks in dynamic random
  environments.
\newblock {\em Electronic Journal of Probability}, 16, 2011.

\bibitem{BOLDREGHINI:IGNATYUK:92}
C.~Boldrighini, I.A. Ignatyuk, V.~Malyshev, and A.~Pellegrinotti.
\newblock Random walk in dynamic environment with mutual influence.
\newblock {\em Stoch. Proc. their Appl.}, 41:157--177, 1992.

\bibitem{BOLDREGHINI:MINLOS:07}
C.~Boldrighini, R.A. Minlos, and A.~Pellegrinotti.
\newblock Random walk in random (fluctuating) environment.
\newblock {\em Russian Math. Reviews}, 62:663--712, 2007.

\bibitem{DOLGOPYAT:LIVERANI:08}
D.~Dolgopyat and C.~Liverani.
\newblock Random walk in determistically changing environment.
\newblock {\em ALEA}, 4:89--116, 2008.

\bibitem{DOLGOPYAT:LIVERANI:09}
D.~Dolgopyat and C.~Liverani.
\newblock Non-perturbative approach to random walk in {M}arkovian environment.
\newblock {\em Electronic Communications in Probability}, 14:245--251, 2009.

\bibitem{JOSEPH:RASSOULAGHA:10}
Mathew Joseph and Firas Rassoul-Agha.
\newblock Almost sure invariance principle for continuous-space random walk in
  dynamic random environment.
\newblock {\em http://arxiv.org/abs/1004.0920}, preprint.

\bibitem{LIGGETT:05}
T.M. Liggett.
\newblock {\em Interacting Particle Systems}.
\newblock Springer, 2005.

\bibitem{RASSOULAGHA:03}
F~Rassoul-Agha.
\newblock The point of view of the particle on the law of large numbers for
  random walks in a mixing random environment.
\newblock {\em Annals of Probability}, 31:1441--1463, 2003.

\bibitem{RASSOULAGHA:SAPPALAINEN:11}
Firas Rassoul-Agha, Timo Seppalainen, and Atilla Yilmaz.
\newblock Quenched free energy and large deviations for random walks in random
  potentials.
\newblock {\em http://arxiv.org/abs/1104.3110}, preprint.

\bibitem{REDIG:VOLLERING:10}
F.~Redig and F.~V{\"o}llering.
\newblock Concentration of additive functionals for markov processes and
  applications to interacting particle systems.
\newblock {\em http://arxiv.org/abs/1003.0006}, preprint.

\end{thebibliography}
\bibliographystyle{plain}

\end{document}